\newcommand{\f}[2]{\frac{#1}{#2}}
\newcommand{\pq}[1]{\left(#1\right)}
\newcommand{\abs}[1]{\left\lvert{#1}\right\rvert}
\DeclareRobustCommand{\Sone}{\genfrac[]{0pt}{}}
\DeclareRobustCommand{\Stwo}{\genfrac\{\}{0pt}{}}
\newtheorem{theorem}{Theorem}
\newtheorem{lemma}[theorem]{Lemma}
\newtheorem{proposition}[theorem]{Proposition}
\newtheorem{corollary}[theorem]{Corollary}
\newtheorem{conjecture}[theorem]{Conjecture}
\theoremstyle{definition}
\newtheorem{definition}[theorem]{Definition}
\newtheorem{example}[theorem]{Example}
\numberwithin{theorem}{section}
\title{Counting Parabolic Double Cosets in Symmetric Groups}
\author{Thomas Browning}
\date{August 2021}
\begin{document}
\maketitle
\begin{abstract}
   Billey, Konvalinka, Petersen, Solfstra, and Tenner recently presented a method for counting parabolic double cosets in Coxeter groups, and used it to compute $p_n$, the number of parabolic double cosets in $S_n$, for $n\leq13$.
   In this paper, we derive a new formula for $p_n$ and an efficient polynomial time algorithm for evaluating this formula.
   We use these results to compute $p_n$ for $n\leq5000$ and to prove an asymptotic formula for $p_n$ that was conjectured by Billey et al.
\end{abstract}
\tableofcontents
\section{Introduction}
For a Coxeter system $(W,S)$, each subset $I\subseteq S$ generates a subgroup $W_I$ of $W$.
These subgroups of $W$ are called parabolic subgroups.
A parabolic double coset in the Coxeter system $(W,S)$ is a double coset of the form $W_IwW_J$ for $w\in W$ and $I,J\subseteq S$.
Parabolic double cosets have several properties that make them interesting objects to study.
For example, the parabolic double cosets of a finite Coxeter group $(W,S)$ are rank-symmetric intervals in the Bruhat order on $W$ \cite{kobayashi}, and the poset of presentations of parabolic double cosets gives a boolean complex that retains many properties of the Coxeter complex \cite{peterson}.

The problem of counting parabolic double cosets is considered in \cite{billey}.
Counting parabolic double cosets is hard because a given parabolic double coset $C$ might arise from multiple choices of $I$ and $J$.
To avoid this problem, Billey, Konvalinka, Petersen, Slofstra, and Tenner take the approach of counting lex-minimal presentations \cite{billey}.
The lex-minimal presentation of a parabolic double coset $C$ is the unique presentation $C=W_IwW_J$ such that $w$ is the minimal element of $C$ in the Bruhat order and such that $(\abs{I},\abs{J})$ is lexicographically minimal among all presentations of $C$.
Theorem 4.26 of \cite{billey} gives a formula for the number of parabolic double cosets with a given minimal element $w\in W$, for any Coxeter group $W$.
In the case of the symmetric group, $W=S_n$, summing this formula over all $n!$ elements of $S_n$ will compute $p_n$, the total number of distinct parabolic double cosets in $S_n$.
The sequence $\{p_n\}$ may be found at \cite[A260700]{oeis}.
The values of $p_n$ for $n\leq13$ have been computed using this method \cite{billey}.
However, the summation over all $n!$ elements of $S_n$ limits the number of terms of $\{p_n\}$ that can be computed with this approach.

In this paper we restrict our attention to counting parabolic double cosets in $S_n$.
Two ways of depicting parabolic double cosets in $S_n$ are the balls-in-boxes model considered in \cite{billey} and two-way contingency tables, which are matrices of nonnegative integers with nonzero row sums and column sums.
Diaconis and Gangolli use the balls-in-boxes model to give a bijection between parabolic double cosets in the double quotient $W_I\backslash S_n/W_J=\{W_IwW_J:w\in S_n\}$ and two-way contingency tables with prescribed row sums and column sums \cite{diaconis}.
They attribute the idea behind the bijection to Nantel Bergeron.
This bijection is also used in \cite{peterson}, where it provides an alternate construction of the two-sided Coxeter complex of $S_n$ in terms of two-way contingency tables.

There are three main results of this paper.
The first result is the explicit formula
\begin{equation}
    \label{eq:pnform}
    p_n=\frac{1}{n!}\sum_{m=0}^n\Sone{n}{m}\sum_{c=0}^{\lfloor m/2\rfloor}\sum_{t=2c}^m\binom{m}{t}\pq{\sum_{k=c}^{t-c}(-1)^k(c+k)!\Stwo{t}{c+k}\binom{k-1}{k-c}}\pq{\sum_{j=0}^c\frac{f_{m-t+c,j}f_{m-t+c,c-j}}{j!\,(c-j)!}},
\end{equation}
where the values $f_{n,k}=\sum_{l=k}^nl!\Stwo{n-k}{l-k}$ are generalizations of the Fubini numbers.
The second result is an efficient polynomial time algorithm to evaluate this formula, which is used to compute the values of $p_n$ for $n\leq5000$.
These values of $p_n$ are summarized in the appendix.
The third result is an asymptotic formula for $p_n$ that proves the following conjecture.
\begin{conjecture}[Conjecture 1.4 in \cite{billey}]
\label{conj:asymp}
There exists a constant $K$ so that
\[\f{p_n}{n!}\sim\f{K}{(\log2)^{2n}}.\]
\end{conjecture}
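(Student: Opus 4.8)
The plan is to start from the explicit formula~\eqref{eq:pnform} and extract the leading-order behaviour of each factor, keeping track of uniformity carefully enough to move the limit inside the triple sum. The outcome should be that Conjecture~\ref{conj:asymp} holds with
\[
  K=\frac{e^{-(\log 2)^2/2}}{4(\log 2)^2}.
\]

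The first step is to pin down the asymptotics of the generalized Fubini numbers. Using $f_{n,k}=\sum_{j\ge0}(j+k)!\Stwo{n-k}{j}$ together with $(j+k)!=\int_0^\infty t^{j+k}e^{-t}\,dt$ and the Dobinski-type identity $\sum_j\Stwo{N}{j}t^j=e^{-t}\sum_{l\ge0}l^N t^l/l!$, one obtains the closed form
\[
  f_{n,k}=\frac{1}{2^{k+1}}\sum_{l\ge0}\frac{l^{\,n-k}\,(l+1)(l+2)\cdots(l+k)}{2^l}.
\]
The generating function $\sum_N\bigl(\sum_l l^N 2^{-l}\bigr)z^N/N!=(1-e^z/2)^{-1}$ has as dominant singularity a simple pole at $z=\log 2$, so $\sum_l l^N 2^{-l}\sim N!/(\log 2)^{N+1}$; since the sum defining $f_{n,k}$ concentrates near $l\approx(n-k)/\log 2$, where the polynomial factor is $\sim l^k$, this gives $f_{n,k}\sim n!\,/\bigl(2^{k+1}(\log 2)^{n+1}\bigr)$ for each fixed $k$, recovering the classical $f_{n,0}=a_n\sim n!/\bigl(2(\log 2)^{n+1}\bigr)$. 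The same closed form yields the uniform bound $f_{n,k}\le a_n$, which is in any case clear combinatorially, as $f_{n,k}$ counts ordered set partitions of $[n]$ in which $k$ prescribed elements are singletons.

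Substituting these asymptotics into~\eqref{eq:pnform}, the innermost sum over $j$ satisfies $\sum_{j=0}^c f_{P,j}f_{P,c-j}/\bigl(j!\,(c-j)!\bigr)\sim (P!)^2/\bigl(4(\log 2)^{2P+2}c!\bigr)$, where $P=m-t+c$, using $\sum_j 1/\bigl(j!(c-j)!\bigr)=2^c/c!$. Writing $m=n-a$ and $t=c+d$ with $d\ge c$, so $P=n-a-d$, and inserting the well-known expansions $\Sone{n}{n-a}\sim n^{2a}/(2^a a!)$ and $\binom mt\sim n^{c+d}/(c+d)!$, a short calculation shows the corresponding term of the triple sum is asymptotic to
\[
  \frac{n!}{4(\log 2)^{2n+2}\,c!}\cdot\frac{n^{\,c-d}\,\sigma(t,c)\,(\log 2)^{2a+2d}}{2^a a!\,(c+d)!},
\]
where $\sigma(t,c)$ denotes the alternating sum over $k$ in~\eqref{eq:pnform}. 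Since $c-d\le0$, every term with $d>c$ drops out in the limit, so only $t=2c$ contributes, and there the $k$-sum collapses to its single term $k=c$, giving $\sigma(2c,c)=(-1)^c(2c)!$ exactly. What survives is
\[
  \frac{n!}{4(\log 2)^{2n+2}}\sum_{a\ge0}\frac{(\log 2)^{2a}}{2^a a!}\sum_{c\ge0}\frac{(-1)^c(\log 2)^{2c}}{c!}
  =\frac{n!}{4(\log 2)^{2n+2}}\,e^{(\log 2)^2/2}\,e^{-(\log 2)^2},
\]
which yields the value of $K$ above.

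The main obstacle is promoting these termwise limits to a genuine asymptotic equivalence: I must show that the error terms, summed over all $O(n)$ admissible values of $m$ and all $O(n)$ values of $t$ and $c$, together contribute $o\!\bigl(n!/(\log 2)^{2n}\bigr)$, and that the tails in $a$ and $c$ are uniformly negligible. This calls for effective inequalities in place of the fixed-parameter asymptotics: the bound $f_{n,k}\le a_n\le C\,n!/(\log 2)^{n+1}$; a uniform estimate of the form $\Sone{n}{n-a}\le(Cn)^{2a}$, coming from the fact that a permutation of $[n]$ with $n-a$ cycles moves at most $2a$ points; and a crude bound on the alternating sum $\sigma(t,c)$ in terms of Fubini numbers (for $c\ge1$, $\abs{\sigma(t,c)}\le\binom{t-c-1}{c-1}a_t$, while $\sigma(t,0)=\llbracket t=0\rrbracket$). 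The factor $(\log 2)^2<1$, reinforced by the $1/a!$ and $1/c!$, then makes the resulting majorant series converge and justifies dominated convergence. I expect this bookkeeping --- in particular a clean uniform handle on $\sigma(t,c)$ and control of the intermediate range of $m$ where neither endpoint asymptotics applies cleanly --- to be the delicate part, while the structural identification of the dominant terms is as sketched above.
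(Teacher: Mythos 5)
Your identification of the dominant terms, the collapse to $t=2c$ with $\sigma(2c,c)=(-1)^c(2c)!$, and the resulting constant $K=e^{-(\log2)^2/2}/(4(\log2)^2)$ all agree with the paper, which follows the same overall strategy --- termwise asymptotics plus dominated convergence --- except that it runs the argument in two stages (first $q_n\sim e^{-(\log2)^2}f_n^2$ from equation (2), then $p_n=\frac{1}{n!}\sum_k\Sone{n}{k}q_k$) rather than on the full quadruple sum at once, and it derives $f_{n,k}\sim n!/(2^{k+1}(\log2)^{n+1})$ from the convolution identity $f_{n+k,k}=k!\sum n!/(n_0!\cdots n_k!)\,f_{n_0}\cdots f_{n_k}$ rather than from your Dobinski-type closed form (which is also workable).

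The genuine problem is that both uniform bounds you propose for the domination step are quantitatively insufficient, and the second one conceals a missing idea. First, with only $\Sone{n}{n-a}\le(Cn)^{2a}$, the sharpest uniform-in-$n$ majorant for the $a$-indexed factor is $C(\log2)^{2a}\sup_{n\ge a}n^{2a}(n-a)!^2/n!^2=C(\log2)^{2a}a^{2a}/a!^2$, which by Stirling grows like $(e\log2)^{2a}/a$; since $e\log2>1$ this is not summable. The paper needs the sharper inequality $\Sone{n}{n-a}\le n^{2a}/(2^a a!)$ (Lemma \ref{lem:Sone_growth}, proved by induction and AM--GM); the extra $1/(2^a a!)$ turns the majorant into roughly $a^{2a}/a!^3$, which does converge. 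Second, and more seriously, your triangle-inequality bound $\abs{\sigma(t,c)}\le\binom{t-c-1}{c-1}f_t$ ignores the enormous cancellation in the alternating $k$-sum. Since $f_t\sim t!/(2(\log2)^{t+1})$, after absorbing the $1/t!$ that comes from $\binom{m}{t}(P!)^2/m!^2\le 1/t!$ you are left with a factor of order $(2/\log2)^t$ (or $(2\log2)^t$ if you retain the $(\log2)^{2(t-c)}$ saving), and either way the sum over $t$ diverges. The paper's resolution (Proposition \ref{prop:htc_combo} and Corollary \ref{cor:htc_estimate}) is to show that $(-1)^c\sigma(t,c)/2^c$ counts ordered set-partitions of $\{1,\ldots,t\}$ into $c$ parts of even cardinality, whence $\abs{\sigma(t,c)}\le 2^cc^t$; only a bound of this quality --- of order $c^t$ rather than $t!$ times an exponential --- makes the majorant $\sum_c\sum_{t\ge2c}2^{2c}c^t/(c!\,t!)\le\sum_c e^{2c}/c!$ finite. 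Without some substitute for this cancellation estimate, your dominated-convergence argument does not close.
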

\begin{theorem}
\label{thm:pnthm}
Conjecture \ref{conj:asymp} holds with constant
\[K=\frac{e^{-(\log2)^2/2}}{4(\log2)^2}\approx0.409223.\]
\end{theorem}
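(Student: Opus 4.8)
The plan is to extract the $n\to\infty$ behavior of the closed form \eqref{eq:pnform} by combining two exact generating-function identities with standard singularity analysis. The first ingredient is a closed form for the inner alternating sum: setting $l=c+k$, using that $l!\Stwo{t}{l}$ counts the surjections from $[t]$ onto $[l]$ (whose exponential generating function in $t$ is $(e^z-1)^l$), that $\binom{k-1}{k-c}=\binom{l-c-1}{c-1}$, and that $\sum_{l\ge 2c}\binom{l-c-1}{c-1}w^l=w^{2c}/(1-w)^c$, a short computation with $w=1-e^z$ yields
\begin{equation*}
   \sum_{t\ge 0}\pq{\sum_{k=c}^{t-c}(-1)^k(c+k)!\Stwo{t}{c+k}\binom{k-1}{k-c}}\f{z^t}{t!}=\pq{2-e^z-e^{-z}}^c=(-1)^c\pq{e^{z/2}-e^{-z/2}}^{2c}.
\end{equation*}
Write $a_c(t)$ for the bracketed sum; since $2-e^z-e^{-z}$ is entire and even with lowest-order term $-z^2$, we have $a_c(t)=0$ unless $t\ge 2c$ is even, while $a_c(2c)=(-1)^c(2c)!$.

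The second ingredient is the asymptotics of the generalized Fubini numbers. Writing $(l'+k)!=\int_0^\infty s^{l'+k}e^{-s}\,ds$ and using the Touchard identity $\sum_{M,l'\ge0}\Stwo{M}{l'}s^{l'}z^M/M!=e^{s(e^z-1)}$ one obtains
\begin{equation*}
   f_{n,k}=k!\,(n-k)!\,[z^{n-k}]\,\f{1}{(2-e^z)^{k+1}},
\end{equation*}
which for $k=0$ recovers the classical $\sum_n f_{n,0}\,z^n/n!=1/(2-e^z)$ for the ordinary Fubini numbers. As $(2-e^z)^{-(k+1)}$ is meromorphic with a unique dominant singularity, a pole of order $k+1$ at $z=\log 2$ where $2-e^z\sim-2(z-\log 2)$, singularity analysis gives, for each fixed $k$,
\begin{equation*}
   f_{n,k}\sim\f{n!}{2^{k+1}(\log 2)^{n+1}},\qquad\text{hence}\qquad R_c(N):=\sum_{j=0}^c\f{f_{N,j}f_{N,c-j}}{j!\,(c-j)!}\sim\f{(N!)^2}{4\,c!\,(\log 2)^{2N+2}},
\end{equation*}
the last step because $\sum_{j=0}^c\f{2^{-j-1}\,2^{-(c-j)-1}}{j!\,(c-j)!}=\f{1}{4\,c!}$. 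I would also record the uniform bounds $0\le f_{n,k}\le C_0\,n!\,(\log 2)^{-n}$ and $\abs{a_c(t)}\le t!\,(ec/t)^t$ (from positivity of the Taylor coefficients of $(2-e^z)^{-(k+1)}$ and of $(e^{z/2}-e^{-z/2})^{2c}$, the latter estimated at $z=t/c$), together with $\Sone{n}{n-i}=e_i(1,2,\dots,n-1)\le n^{2i}/(2^i\,i!)$ and $\Sone{n}{n-i}\sim n^{2i}/(2^i\,i!)$ for fixed $i$.

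To assemble the asymptotics I would reindex \eqref{eq:pnform} by the defect $i=n-m$ and by $s=t-c$, so that, writing $Q_m:=\sum_{c=0}^{\lfloor m/2\rfloor}\sum_{t=2c}^m\binom mt a_c(t)R_c(m-t+c)$ (whence $p_n=\tfrac1{n!}\sum_m\Sone{n}{m}Q_m$), a generic summand of $p_n/n!$ equals $(n!)^{-2}\Sone{n}{n-i}\binom{n-i}{s+c}a_c(s+c)\,R_c(n-i-s)$ with $i,c\ge0$ and $s\ge c$. For fixed $i,c,s$ the factors satisfy $\Sone{n}{n-i}\sim n^{2i}/(2^i i!)$, $\binom{n-i}{s+c}\sim n^{s+c}/(s+c)!$, $R_c(n-i-s)\sim((n-i-s)!)^2/\pq{4\,c!\,(\log 2)^{2(n-i-s)+2}}$, and $\pq{(n-i-s)!/n!}^2\sim n^{-2(i+s)}$, so the summand is asymptotic to
\begin{equation*}
   \f{a_c(s+c)}{2^i\,i!\,(s+c)!\,4\,c!}\;(\log 2)^{2i+2s-2}\;n^{\,c-s}\;(\log 2)^{-2n}.
\end{equation*}
Since $s\ge c$, the power $n^{c-s}$ sends the limit to $0$ unless $s=c$, i.e.\ $t=2c$; there $a_c(2c)=(-1)^c(2c)!$, so the summand times $(\log 2)^{2n}$ tends to $(-1)^c(\log 2)^{2i+2c-2}/\pq{4\cdot 2^i\,i!\,c!}$. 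Summing over $i$ gives $\sum_i\pq{(\log 2)^2/2}^i/i!=e^{(\log 2)^2/2}$ and over $c$ gives $\sum_c\pq{-(\log 2)^2}^c/c!=e^{-(\log 2)^2}$, so
\begin{equation*}
   \f{p_n}{n!}\sim\f{(\log 2)^{-2}}{4}\,e^{(\log 2)^2/2}\,e^{-(\log 2)^2}\,(\log 2)^{-2n}=\f{e^{-(\log 2)^2/2}}{4(\log 2)^2}\,(\log 2)^{-2n},
\end{equation*}
which matches the asserted value of $K$.

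The main obstacle is making the previous paragraph rigorous, i.e.\ interchanging $n\to\infty$ with a triple sum whose index ranges all grow with $n$; this calls for a dominated-convergence (or $\varepsilon/3$) argument. One needs a summable-in-$(i,c,s)$ majorant valid for all large $n$, which the recorded bounds supply through geometric-type decay in $i$ (the $2^{-i}/i!$) and in $c$ (the growth of $\abs{a_c(t)}/t!$ being outpaced by the $2^c/c!$ coming from $R_c$), reinforced by a negative power of $n$ whenever $s>c$; and one needs a separate crude estimate that the terms with $m\le(1-\delta)n$ contribute $o\pq{(\log 2)^{-2n}}$, which holds because, after bounding $Q_m$ by $O\pq{(m!)^2(\log 2)^{-2m}}$ and using $\sum_m\Sone{n}{m}=n!$, a suppressing factor of order $n^{-\Omega(n)}$ appears. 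Putting these estimates together is where essentially all of the technical effort lies; the two generating-function identities and the singularity analysis are routine.
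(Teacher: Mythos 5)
Your proposal is correct and follows essentially the same route as the paper: the same reduction through $q_n$ and the generalized Fubini numbers, the same identification of the dominant $t=2c$ terms via $h_{2c,c}=(-1)^c(2c)!$ producing the factor $e^{-(\log 2)^2}$, the same use of $\Sone{n}{n-k}\sim n^{2k}/(2^kk!)$ producing the factor $e^{(\log 2)^2/2}$, and a dominated-convergence justification with the same kind of majorant (your bounds on $f_{n,k}$, $a_c(t)$, and $\Sone{n}{n-i}$ do suffice once combined with the factorization $\binom{m}{t}\bigl((m-t+c)!/m!\bigr)^2\leq 1/t!$, which is the paper's Lemma \ref{lem:tech1}). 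The only local differences are that you obtain the $f_{n,k}$ asymptotics by singularity analysis of $k!/(2-e^z)^{k+1}$ (an alternative the paper explicitly mentions before opting for a real-variable argument), you encode Proposition \ref{prop:htc_combo} as the equivalent identity $\sum_t h_{t,c}z^t/t!=(2-e^z-e^{-z})^c$, and you merge the paper's two dominated-convergence steps (Theorems \ref{thm:qn_growth} and \ref{thm:pn_growth}) into a single three-index one.
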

We conclude this introduction by outlining the two key ideas behind formula (\ref{eq:pnform}).
The first idea is to associate each parabolic double coset with a canonical two-way contingency table.
In particular, there is a bijection between parabolic double cosets in $S_n$ and two-way contingency tables with sum $n$ that are ``maximal'' (as defined at the end of section 2).
This reduces the problem of computing $p_n$ to the problem of counting maximal two-way contingency tables with sum $n$.

Two-way contingency tables have both a restriction on the rows (nonzero row sums) and a restriction on the columns (nonzero column sums).
These two restrictions are entangled since any nonzero entry in the table both makes its row sum nonzero and its column sum nonzero.
The second idea is to break this entanglement by transforming the problem from counting two-way contingency tables to counting pairs of weak orders (a weak order is a binary relation that is transitive and has no incomparable pairs of elements).

This transformation is applied in sections 3.2 and 3.3 to the problem of counting maximal two-way contingency tables with sum $n$, as well as to the simpler problem of counting all two-way contingency tables with sum $n$.
More generally, this transformation is applicable whenever counting two-way contingency tables with a restriction on the locations of nonzero entries, but not on their specific values.
\section{Background}
In this section, we give an overview of parabolic double cosets in $S_n$, and describe two ways of depicting presentations of parabolic double cosets in $S_n$.
We will use $s_i$ to denote the $i$th adjacent transposition in $S_n$ (the transposition that swaps $i$ and $i+1$).
\begin{definition}
A \textit{parabolic subgroup} $W_I$ in the symmetric group $S_n$ is a subgroup $W_I=\langle I\rangle=\langle s:s\in I\rangle$ of $S_n$ generated by a collection $I\subseteq\{s_1,\ldots,s_{n-1}\}$ of adjacent transpositions in $S_n$.
\end{definition}
\begin{definition}
A \textit{parabolic double coset} $C$ in the symmetric group $S_n$ is a double coset of the form $C=W_IwW_J$ for parabolic subgroups $W_I$ and $W_J$ in $S_n$ and an element $w\in S_n$.
The triple $(I,w,J)$ is called a \textit{presentation} of $C$.
The total number of distinct parabolic double cosets in $S_n$ is denoted by $p_n$.\\
The sequence $\{p_n\}$ may be found at \cite[A260700]{oeis}.
\end{definition}
A parabolic double coset $C$ will usually have many presentations.
For example, if $(I,w,J)$ is a presentation of $C$, then so is $(I,w^\prime,J)$ for any $w^\prime\in C$.
As the next example shows, there could also be multiple choices for the collections $I$ and $J$.
In other words, $C$ might be an element of more than one double quotient
\[W_I\backslash S_n/W_J=\{W_IwW_J:w\in S_n\}.\]
\begin{example}
\label{ex:manypresentations}
The parabolic double coset $C=S_2$ in $S_2$ has six different presentations:
\[\begin{tabular}{c c c}
    $(\{\},12,\{s_1\})$&$(\{s_1\},12,\{s_1\})$&$(\{s_1\},12,\{\})$\\
    $(\{\},21,\{s_1\})$&$(\{s_1\},21,\{s_1\})$&$(\{s_1\},21,\{\})$
\end{tabular}\]
\end{example}
For each element $w\in S_n$, the ball-in-boxes picture of $w$ is given by placing $n$ balls in an $n\times n$ grid of boxes in the layout of the permutation matrix of $w$.
A ball is placed in column $a$ and row $b$ if and only if $w(a)=b$.
We label the columns left-to-right and the rows bottom-to-top, as in the Cartesian coordinate system.
Left-multiplication acts by permuting the rows of the grid and right-multiplication acts by permuting the columns of the grid.

Consider a presentation $(I,w,J)$ of a parabolic double coset $C$ in $S_n$.
The group $W_I$ acts on the balls-in-boxes picture of $w$ by permuting certain rows of the grid.
The group $W_J$ acts on the balls-in-boxes picture of $w$ by permuting certain columns of the grid.
For each adjacent transposition $s_i\notin I$, we draw a horizontal wall between row $i$ and row $i+1$.
For each adjacent transposition $s_j\notin J$, we draw a vertical wall between column $j$ and column $j+1$.
Then $W_I$ acts on the balls-in-boxes picture of $w$ by permuting rows of the grid within the horizontal walls, and $W_J$ acts on the balls-in-boxes picture of $w$ by permuting columns of the grid within the vertical walls.
The resulting picture is called the balls-in-boxes picture with walls for the presentation $(I,w,J)$ \cite{billey}.
\begin{example}[Example 3.5 in \cite{billey}]\label{ex:ballsinboxes}
Let $w=3512467\in S_7$, let $I=\{s_1,s_3,s_4\}$, let $J=\{s_2,s_3,s_5,s_6\}$, and let $C=W_IwW_J$.
Figure 1 depicts the balls-in-boxes picture with walls for the presentation $(I,w,J)$ of $C$.
\begin{figure}[H]
\centering
\begin{tikzpicture}[scale=0.5]
\draw[ultra thick](0,0)--(0,7);
\draw[ultra thick](1,0)--(1,7);
\draw(2,0)--(2,7);
\draw(3,0)--(3,7);
\draw[ultra thick](4,0)--(4,7);
\draw(5,0)--(5,7);
\draw(6,0)--(6,7);
\draw[ultra thick](7,0)--(7,7);
\draw[ultra thick](0,0)--(7,0);
\draw(0,1)--(7,1);
\draw[ultra thick](0,2)--(7,2);
\draw(0,3)--(7,3);
\draw(0,4)--(7,4);
\draw[ultra thick](0,5)--(7,5);
\draw[ultra thick](0,6)--(7,6);
\draw[ultra thick](0,7)--(7,7);
\filldraw(0.5,2.5)circle[radius=0.2];
\filldraw(1.5,4.5)circle[radius=0.2];
\filldraw(2.5,0.5)circle[radius=0.2];
\filldraw(3.5,1.5)circle[radius=0.2];
\filldraw(4.5,3.5)circle[radius=0.2];
\filldraw(5.5,5.5)circle[radius=0.2];
\filldraw(6.5,6.5)circle[radius=0.2];
\draw(-0.5,1)node{\large$s_1$};
\draw(-0.5,3)node{\large$s_3$};
\draw(-0.5,4)node{\large$s_4$};
\draw(2,-0.5)node{\large$s_2$};
\draw(3,-0.5)node{\large$s_3$};
\draw(5,-0.5)node{\large$s_5$};
\draw(6,-0.5)node{\large$s_6$};
\end{tikzpicture}
\caption{A balls-in-boxes picture with walls.}
\end{figure}
\end{example}
The walls naturally divide the $n\times n$ grid into larger rectangular \textit{cells}.
Notice that the number of balls in each cell remains unchanged under the actions of $W_I$ and $W_J$.
Thus the number of balls in a given cell is constant over all elements of $C$.
By counting the number of balls in each cell, we obtain a matrix of nonnegative integers such that the sum of all of the entries is $n$ and such that each row sum and column sum is strictly positive.
\begin{definition}
\label{def:twoway}
A \textit{two-way contingency table} is a matrix of nonnegative integers such that each row sum and column sum is strictly positive.
Let $p_{n,0}$ denote the number of two-way contingency tables with sum $n$.\\
The sequence $\{p_{n,0}\}$ may be found at \cite[A120733]{oeis}.
\end{definition}
Thus, each presentation $(I,w,J)$ of $C$ gives rise to a two-way contingency table with sum $n$.
Conversely, given a two-way contingency table with sum $n$, it is possible to recover the collections $I$ and $J$, as well as the parabolic double coset $C\in W_I\backslash S_n/W_J$ \cite[Section 3B]{diaconis}.
As a consequence, two-way contingency tables with sum $n$ are in bijection with triples $(C,I,J)$ consisting of two collections $I,J\subseteq\{s_1,\ldots,s_{n-1}\}$ and a parabolic double coset $C\in W_I\backslash S_n/W_J$ \cite{peterson}.
Another way to put this is that $p_{n,0}=\sum_I\sum_J\abs{W_I\backslash S_n/W_J}$, because the right hand side counts triples $(C,I,J)$.
\begin{example}
\label{ex:contingency}
The two-way contingency table associated to the triple $(C,I,J)$ from Example \ref{ex:ballsinboxes} is the matrix
\[\begin{pmatrix}0&0&1\\0&0&1\\1&1&1\\0&2&0\end{pmatrix}.\]
\end{example}
Even though a given parabolic double coset might have multiple distinct two-way contingency tables arising from multiple choices for the collections $I$ and $J$, the following proposition shows that it is always possible to choose $I$ and $J$ to be simultaneously maximal, resulting in a canonical two-way contingency table associated to each parabolic double coset.
\begin{proposition}
\label{prop:maximalpresentation}
Let $C$ be a parabolic double coset in $S_n$.
Let $I=\{s_i:s_iC=C\}$, let $J=\{s_i:Cs_i=C\}$, and let $w\in C$ be arbitrary.
Then $(I,w,J)$ is a presentation of $C$ and is maximal in the sense that if $(I^\prime,w^\prime,J^\prime)$ is any other presentation of $C$, then $I^\prime\subseteq I$ and $J^\prime\subseteq J$.
\end{proposition}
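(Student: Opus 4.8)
The plan is to treat the two assertions in reverse order: first prove the maximality property, and then combine it with an arbitrary starting presentation of $C$ to check that $(I,w,J)$ is a presentation.

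The maximality property is essentially formal. Suppose $C=W_{I'}w'W_{J'}$ is any presentation and let $s_i\in I'$. Then $s_i\in W_{I'}$, so $s_iW_{I'}=W_{I'}$ and hence $s_iC=s_iW_{I'}w'W_{J'}=W_{I'}w'W_{J'}=C$; by the definition of $I$ this gives $s_i\in I$, so $I'\subseteq I$. The inclusion $J'\subseteq J$ follows symmetrically using right multiplication by $s_i$.

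For the first assertion, I would first note that the defining condition of $I$ passes from the generators to all of $W_I$: each $s_i\in I$ satisfies $s_iC=C$, and since such $s_i$ generate $W_I$ (and each is an involution) we get $W_IC=C$; similarly $CW_J=C$. Now fix any presentation $C=W_{I_0}w_0W_{J_0}$. The maximality property just proved gives $I_0\subseteq I$ and $J_0\subseteq J$, hence $W_{I_0}\subseteq W_I$ and $W_{J_0}\subseteq W_J$, so $C=W_{I_0}w_0W_{J_0}\subseteq W_Iw_0W_J\subseteq W_ICW_J=C$ and therefore $C=W_Iw_0W_J$. Finally, given an arbitrary $w\in C=W_Iw_0W_J$, write $w=aw_0b$ with $a\in W_I$ and $b\in W_J$; then $W_IwW_J=W_I(aw_0b)W_J=W_Iw_0W_J=C$, so $(I,w,J)$ is a presentation of $C$.

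The only step that needs a moment's thought---the ``obstacle'' here---is the inclusion $C\subseteq W_IwW_J$, since the reverse inclusion and the entire maximality statement are purely formal. One cannot get at $W_I$ directly from its definition, which only records which single adjacent transpositions stabilize $C$; the remedy is to bootstrap off an arbitrary presentation $(I_0,w_0,J_0)$, enlarge its parabolic subgroups to $W_I$ and $W_J$ via maximality, and then collapse $W_Iw_0W_J$ back down to $C$ using $W_ICW_J=C$. No input beyond the group axioms and the definitions is needed.
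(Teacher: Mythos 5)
Your proof is correct and follows essentially the same route as the paper: both arguments rest on the two observations that the generators in $I$ and $J$ stabilize $C$ (so $W_ICW_J=C$) and that any presentation's index sets are formally contained in $I$ and $J$. The only cosmetic difference is the order of the two halves and your explicit factorization $w=aw_0b$ to pass from the representative $w_0$ to an arbitrary $w\in C$, which the paper handles by noting that any element of $C$ can serve as the representative in a given presentation.
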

\begin{proof}
Consider the stabilizer subgroups $H_L=\{\sigma\in S_n\colon\sigma C=C\}$ and $H_R=\{\sigma\in S_n\colon C\sigma=C\}$.
Then $I=H_L\cap\{s_1,\ldots,s_{n-1}\}$ and $J=H_R\cap\{s_1,\ldots,s_{n-1}\}$.
In particular, $I\subseteq H_L$ and $J\subseteq H_R$ so $W_I=\langle I\rangle\subseteq H_L$ and $W_J=\langle J\rangle\subseteq H_R$.
Since $w\in C$, this shows that $W_IwW_J\subseteq C$.
We will now show that $C\subseteq W_IwW_J$.

Let $(I^\prime,w^\prime,J^\prime)$ be any presentation of $C$.
Then $C=W_{I^\prime}w^\prime W_{J^\prime}$.
If $\sigma\in I^\prime$, then
\[\sigma C=(\sigma W_{I^\prime})(w^\prime W_{J^\prime})=(W_{I^\prime})(w^\prime W_{J^\prime})=C\]
so $\sigma\in H_L$.
Thus, $I^\prime\subseteq H_L$ and similarly $J^\prime\subseteq H_R$.
Intersecting with $\{s_1,\ldots,s_{n-1}\}$ shows that $I^\prime\subseteq I$ and $J^\prime\subseteq J$.
Then $W_{I^\prime}\subseteq W_I$ and $W_{J^\prime}\subseteq W_J$ so
\[C=W_{I^\prime}w^\prime W_{J^\prime}=W_{I^\prime}wW_{J^\prime}\subseteq W_IwW_J.\]
Combining this with the earlier inclusion $W_IwW_J\subseteq C$ gives the equality $W_IwW_J=C$.
Thus, $(I,w,J)$ is a presentation of $C$.
Finally, $(I,w,J)$ is maximal since we already showed that if $(I^\prime,w^\prime,J^\prime)$ is any presentation of $C$, then $I^\prime\subseteq I$ and $J^\prime\subseteq J$.
\end{proof}
For an alternative proof of Proposition \ref{prop:maximalpresentation}, see Proposition 3.7 in \cite{billey}.
Proposition \ref{prop:maximalpresentation} gives a bijection
\begin{align*}
    \left\{\substack{\text{\small{parabolic double}}\\\text{\small{cosets $C\in S_n$}}}\right\}&\longleftrightarrow\left\{\substack{\text{\small{Triples $(C,I,J)$ consisting of two collections}}\\\text{\small{$I,J\subseteq\{s_1,\ldots,s_{n-1}\}$ and a parabolic}}\\\text{\small{double coset $C\in W_I\backslash S_n/W_J$ such that }}\\\text{\small{$I=\{s_i:s_iC=C\}$ and $J=\{s_i:Cs_i=C\}$}}}\right\}.
\end{align*}
This bijection is given by
\begin{align*}
    C&\longmapsto(C,\{s_i:s_iC=C\},\{s_i:Cs_i=C\}),\\
    C&\longmapsfrom(C,I,J),
\end{align*}
where Proposition \ref{prop:maximalpresentation} guarentees that the triple $(C,\{s_i:s_iC=C\},\{s_i:Cs_i=C\})$ satisfies the condition $C\in W_I\backslash S_n/W_J$ required by the set on the right hand side of the bijection.

The bijection between triples $(C,I,J)$ and two-way contingency tables described after Definition \ref{def:twoway} gives a bijection
\[\left\{\substack{\text{\small{Triples $(C,I,J)$ consisting of two collections}}\\\text{\small{$I,J\subseteq\{s_1,\ldots,s_{n-1}\}$ and a parabolic}}\\\text{\small{double coset $C\in W_I\backslash S_n/W_J$ such that }}\\\text{\small{$I=\{s_i:s_iC=C\}$ and $J=\{s_i:Cs_i=C\}$}}}\right\}\longleftrightarrow\left\{\substack{\text{\small{Two-way contingency tables with sum $n$}}\\\text{\small{whose associated triple $(C,I,J)$ satisfies}}\\\text{\small{$I=\{s_i:s_iC=C\}$ and $J=\{s_i:Cs_i=C\}$}}}\right\}.\]
This motivates the following definition.
\begin{definition}
A two-way contingency table is said to be \textit{maximal} if the associated triple $(C,I,J)$ satisfies $I=\{s_i:s_iC=C\}$ and $J=\{s_i:Cs_i=C\}$.
\end{definition}
We remark that the maximal two-way contingency table associated to a given parabolic double coset has the smallest possible dimensions and the largest possible entries.

Putting this all together gives a bijection
\[\left\{\substack{\text{\small{parabolic double}}\\\text{\small{cosets $C\in S_n$}}}\right\}\longleftrightarrow\left\{\substack{\text{\small{Maximal two-way}}\\\text{\small{contingency tables}}\\\text{\small{with sum $n$}}}\right\}.\]
\begin{corollary}
\label{cor:pncombo}
The sequence $\{p_{n,0}\}$ counts the number of two-way contingency tables with sum $n$.
The sequence $\{p_n\}$ counts the number of maximal two-way contingency tables with sum $n$.
\end{corollary}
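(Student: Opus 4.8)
The plan is to read off both statements from material already in hand; neither requires new ideas. The first claim --- that $p_{n,0}$ counts two-way contingency tables with sum $n$ --- is nothing more than the definition of $p_{n,0}$ recorded in Definition \ref{def:twoway}, so there is nothing to prove there.

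For the second claim I would simply chain the two bijections displayed just above the corollary. Proposition \ref{prop:maximalpresentation} supplies a bijection between parabolic double cosets $C$ in $S_n$ and triples $(C,I,J)$ for which $I=\{s_i:s_iC=C\}$ and $J=\{s_i:Cs_i=C\}$, since that proposition guarantees both that such a triple is a genuine presentation of $C$ and that the collections $I,J$ attached to $C$ are forced. The bijection recalled after Definition \ref{def:twoway} (due to Diaconis and Gangolli, following Bergeron) identifies the set of \emph{all} triples $(C,I,J)$ with the set of \emph{all} two-way contingency tables with sum $n$. Restricting this latter bijection along the former, parabolic double cosets correspond exactly to those two-way contingency tables whose associated triple $(C,I,J)$ satisfies $I=\{s_i:s_iC=C\}$ and $J=\{s_i:Cs_i=C\}$ --- which is precisely the condition that the table be maximal. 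Thus parabolic double cosets in $S_n$ are in bijection with maximal two-way contingency tables with sum $n$.

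Since $p_n$ is by definition the number of parabolic double cosets in $S_n$, this bijection yields the asserted count. I do not anticipate any obstacle here: all the substantive content already lives in Proposition \ref{prop:maximalpresentation} and in the bijection with contingency tables, both of which may be assumed, and the present statement is a bookkeeping composition of them. The only point deserving a sentence of care is that the restriction of the Diaconis bijection to maximal triples lands exactly in the set of maximal tables, but this is automatic because maximality of a table was \emph{defined} through its associated triple.
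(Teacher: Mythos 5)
Your proposal is correct and matches the paper's treatment exactly: the paper also presents this corollary as an immediate consequence of composing the bijection from Proposition \ref{prop:maximalpresentation} with the Diaconis--Gangolli bijection between triples $(C,I,J)$ and contingency tables, with the first claim being just Definition \ref{def:twoway}. Nothing further is needed.
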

We will now turn our attention to counting maximal two-way contingency tables with sum $n$.
\section{Computation of \texorpdfstring{$p_n$}{Lg}}
\subsection{Characterization of Maximality}
The following proposition describes how to determine whether the conditions $s_iC=C$ and $Cs_i=C$ are satisfied from the balls-in-boxes picture with walls.
We will use the term \textit{cell} to mean one of the rectangular regions formed by the walls of the balls-in-boxes picture with walls.
\begin{proposition}
\label{prop:contingencyclosure}
Let $C=W_IwW_J$ be a parabolic double coset in $S_n$, and let $P$ be the balls-in-boxes picture with walls for the presentation $(I,w,J)$.
\begin{enumerate}
    \item Let $s_i\in\{s_1,\ldots,s_{n-1}\}\setminus I$.
    Then $s_iC=C$ if and only if the row of cells of $P$ directly below the $s_i$-wall and the row of cells of $P$ directly above the $s_i$-wall each have only one nonempty cell, the two of which are vertically adjacent.
    \item Let $s_j\in\{s_1,\ldots,s_{n-1}\}\setminus J$.
    Then $Cs_j=C$ if and only if the column of cells of $P$ directly left the $s_j$-wall and the column of cells of $P$ directly right the $s_j$-wall each have only one nonempty cell, the two of which are horizontally adjacent.
\end{enumerate}
\end{proposition}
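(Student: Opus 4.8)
The plan is to translate the conditions $s_iC=C$ and $Cs_jC$ ... — more precisely $s_iC=C$ and $Cs_j=C$ — into combinatorial statements about the cell-content matrix $M$ of $P$, using the fact recalled in Section~2 (the Diaconis--Bergeron correspondence, together with the fact that distinct double cosets in a fixed double quotient have distinct tables) that $C$ is exactly the set of permutation matrices whose balls-in-boxes picture has cell-content matrix $M$, i.e.\ $C=\{\pi:\pi\text{ has cell-content matrix }M\}$. Passing to inverses transposes the whole picture and interchanges left and right multiplication, so statement (2) follows by applying statement (1) to $C^{-1}=W_Jw^{-1}W_I$; hence I only treat (1).

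Since $s_i\notin I$ there is an $s_i$-wall, and rows $i$ and $i+1$ lie in distinct horizontal bands: write $B$ for the band containing row $i$, which is the row of cells directly below the $s_i$-wall, and $B'$ for the band containing row $i+1$, the row of cells directly above it. The first step is an elementary observation about a single permutation matrix $\pi$: swapping rows $i$ and $i+1$ changes the content of the cell in band $B$ and vertical band $V$ by $[b\in V]-[a\in V]$, and the content of the cell in $B'$ and $V$ by the negative of this, where $a$ and $b$ are the columns of the unique balls of $\pi$ in rows $i$ and $i+1$; since the vertical bands partition the columns, $s_i\pi$ has the same cell-content matrix as $\pi$ if and only if $a$ and $b$ lie in a common vertical band. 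Combining this with the set-theoretic description of $C$, and using $s_i^2=\id$ (so $s_iC=C\iff s_iC\subseteq C$), we obtain: $s_iC=C$ if and only if for every $\pi\in C$ the balls of $\pi$ in rows $i$ and $i+1$ lie in a common vertical band.

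The second step determines, as $\pi$ ranges over $C$, which vertical bands can contain the ball in row $i$ and which can contain the ball in row $i+1$, and shows these can be prescribed independently. Because there is no wall between consecutive rows of $B$, every adjacent transposition of two consecutive rows of $B$ lies in $I$, so $W_I$ contains the permutation swapping any chosen row of $B$ with row $i$; applying it to a fixed $\pi_0\in C$ moves any ball of $\pi_0$ lying in a row of $B$ into row $i$, so the vertical band of the row-$i$ ball ranges exactly over $\{V:M_{B,V}>0\}$, the vertical bands of the nonempty cells of $B$. The analogous statement holds for row $i+1$ with $\{V:M_{B',V}>0\}$, and since the two permutations used are supported on the disjoint row sets of $B$ and $B'$ they commute and may be applied simultaneously, giving the independence. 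Hence ``every $\pi\in C$ has its two balls in a common vertical band'' holds precisely when $\{V:M_{B,V}>0\}$ and $\{V:M_{B',V}>0\}$ are each singletons and coincide — that is, when $B$ and $B'$ each contain exactly one nonempty cell and those two cells are vertically adjacent, which is the assertion of (1).

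The routine parts are the arithmetic of how cell contents change under a single row swap and the bookkeeping identifying horizontal (resp.\ vertical) bands with rows (resp.\ columns) of cells lying above/below/left/right of a given wall. The point I expect to require the most care — the crux — is the reduction to the description $C=\{\pi:\pi\text{ has cell-content matrix }M\}$, since this is what legitimizes quantifying over all of $C$ and is exactly where the bijective content of Section~2 is used; once it is in place, the only other nontrivial ingredient is the independence argument of the second step, which is the one spot where the actual generators of $W_I$, rather than merely the fact that $W_I$ preserves cell contents, are needed.
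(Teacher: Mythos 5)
Your argument is correct, and its combinatorial core coincides with the paper's: both proofs rest on (a) the observation that left-multiplying by $s_i$ preserves the cell-content matrix exactly when the balls in rows $i$ and $i+1$ lie in the same vertical band of cells, and (b) the fact that $W_I$ contains the full symmetric group on the rows of the band $B$, so the ball occupying row $i$ can be steered into any nonempty cell of $B$ (this is precisely the paper's choice of $x\in W_I$ in its converse direction, and your ``independence'' step). The genuine difference is in how ``same cell-content matrix'' is converted back into ``same coset.'' The paper never needs the equality $C=\{\pi:\pi\text{ has cell-content matrix }M\}$: for the forward implication it argues directly that the hypothesis forces $w^{-1}(i)$ and $w^{-1}(i+1)$ into one column band, so $w^{-1}s_iw\in W_J$ and hence $s_iw\in wW_J\subseteq C$; for the converse it only uses the easy inclusion that cell contents are constant on $C$. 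You instead invoke the full set-theoretic description of $C$, whose nontrivial half is the injectivity of the coset-to-table map from the Diaconis--Gangolli correspondence recalled in Section~2. That is legitimate here (the correspondence is quoted from external sources and does not depend on this proposition, so there is no circularity), and you correctly flagged it as the crux; but it is a strictly heavier dependency than the paper's self-contained group-theoretic step $w^{-1}s_iw\in W_J$, which you could substitute to make your forward direction independent of the bijection. Your reduction of statement (2) to statement (1) via $C^{-1}=W_Jw^{-1}W_I$ and transposition of the picture is a small economy over the paper's ``the proof of the second statement is similar.''
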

\begin{proof}
Let $s_i\in\{s_1,\ldots,s_{n-1}\}\setminus I$.
First suppose that the row of cells of $P$ directly below the $s_i$-wall and the row of cells of $P$ directly above the $s_i$-wall each have only one nonempty cell, the two of which are vertically adjacent.
Let $A$ be the nonempty cell of $P$ directly below the $s_i$-wall and let $B$ be the nonempty cell of $P$ directly above the $s_i$-wall.
Since $A$ and $B$ are each the only nonempty cells in their row of cells, the ball in row $i$ must lie in $A$ and the ball in row $i+1$ must lie in $B$.
Then the ball in column $w^{-1}(i)$ must lie in $A$ and the ball in column $w^{-1}(i+1)$ must lie in $B$.
Since the cells $A$ and $B$ are vertically adjacent, $w^{-1}(i)$ and $w^{-1}(i+1)$ lie in the same column of cells.
Then the transposition $w^{-1}s_iw$ that swaps $w^{-1}(i)$ and $w^{-1}(i+1)$ is an element of $W_J$.
Taking $w^{-1}s_iw\in W_J$ and multiplying on the left by $w$ gives $s_iw\in wW_J\subseteq C$.
Since $w\in C$ was arbitrary, this shows that $s_iC\subseteq C$.
Since $\abs{s_iC}=\abs{C}$, we must have $s_iC=C$.

For the converse, suppose that $P$ has a nonempty cell $A$ directly below the $s_i$-wall and a nonempty cell $B$ directly above the $s_i$-wall such that $A$ and $B$ are not vertically adjacent.
This is just the negation of the original assumption on $P$.
Since $A$ and $B$ are nonempty, we can multiply $w$ on the left by an element $x\in W_I$ to make $A$ contain the ball in row $i$ and to make $B$ contain the ball in row $i+1$.
Then $s_ixw$ has fewer balls in cells $A$ and $B$ than $xw$.
However, the number of balls in a given cell is constant over all elements of $C$.
Since $xw\in C$, this forces $s_ixw\not\in C$.
In particular, $s_iC\neq C$.
This proves the first statement.
The proof of the second statement is similar.
\end{proof}
By definition, a two-way contingency table is maximal precisely when neither of the ``if and only if''s in Proposition \ref{prop:contingencyclosure} are ever satisfied.
This gives a characterization of maximal two-way contingency tables.
\begin{corollary}
\label{cor:maximaltable1}
Let $T$ be a two-way contingency table.
Then $T$ is maximal if and only if $T$ satisfies both of the following two conditions:
\begin{enumerate}
    \item There does not exist a pair of vertically adjacent nonzero entries, each of which is the only nonzero entry in its row.
    \item There does not exist a pair of horizontally adjacent nonzero entries, each of which is the only nonzero entry in its column.
\end{enumerate}
\end{corollary}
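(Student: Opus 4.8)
The plan is to unwind the definition of maximality and to translate Proposition~\ref{prop:contingencyclosure} from the language of the balls-in-boxes picture with walls into the language of the matrix $T$ itself. Since every matrix of nonnegative integers has some finite sum $n$, the discussion following Definition~\ref{def:twoway} lets us fix a triple $(C,I,J)$ whose associated table is $T$; fix also any $w\in C$ and let $P$ be the balls-in-boxes picture with walls for $(I,w,J)$. The first step is the elementary observation that one always has $I\subseteq\{s_i:s_iC=C\}$ and $J\subseteq\{s_j:Cs_j=C\}$: if $s_i\in I$ then $s_i\in W_I$, so $s_iC=s_iW_IwW_J=W_IwW_J=C$, and symmetrically on the right. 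Hence $T$ fails to be maximal exactly when one of these inclusions is strict, i.e.\ when there exists $s_i\notin I$ with $s_iC=C$ or there exists $s_j\notin J$ with $Cs_j=C$.

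The second step is to build the dictionary between $P$ and $T$. The horizontal walls of $P$ are precisely the $s_i$-walls with $s_i\notin I$, and they cut the grid into horizontal bands of rows; read from bottom to top, these bands are exactly the rows of $T$, and any two consecutive bands are separated by a single $s_i$-wall with $s_i\notin I$. Thus, for each $s_i\notin I$, ``the row of cells directly below the $s_i$-wall'' and ``the row of cells directly above the $s_i$-wall'' form a pair of consecutive rows $r,r+1$ of $T$; a cell is nonempty exactly when the corresponding entry of $T$ is nonzero (note this depends only on $T$, not on the choice of $w$); and two nonempty cells, one in band $r$ and one in band $r+1$, are vertically adjacent exactly when they lie in a common band of columns, i.e.\ when the two corresponding nonzero entries of $T$ lie in the same column, hence are vertically adjacent entries of $T$. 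The analogous statements hold for vertical walls, columns of $T$, and horizontal adjacency.

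Putting these together, Proposition~\ref{prop:contingencyclosure}(1) says that there exists $s_i\notin I$ with $s_iC=C$ if and only if $T$ has a pair of consecutive rows, each containing exactly one nonzero entry, with those two entries in the same column---equivalently, if and only if condition~(1) of the corollary fails. Likewise Proposition~\ref{prop:contingencyclosure}(2) says that there exists $s_j\notin J$ with $Cs_j=C$ if and only if condition~(2) fails. Combining with the first step, $T$ is non-maximal if and only if condition~(1) or condition~(2) fails, and taking contrapositives gives the stated equivalence.

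The only place any real care is needed is the dictionary of the second step: one must verify that consecutive row-bands of $P$ really do correspond to consecutive rows of $T$ (so that the ``vertically adjacent nonzero entries'' in the corollary match the ``row below a single wall / row above it'' phrasing of the proposition) and that ``vertically adjacent cells'' can only mean ``cells in a common column-band,'' so that it unambiguously translates to vertical adjacency of entries of $T$; the symmetric check for columns is identical. Everything else is bookkeeping on top of the bijections already established.
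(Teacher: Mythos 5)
Your proposal is correct and takes essentially the same route as the paper, which derives the corollary directly from Proposition \ref{prop:contingencyclosure} by observing that maximality fails exactly when one of its ``if and only if''s is satisfied for some $s_i\notin I$ or $s_j\notin J$. You simply make explicit the inclusion $I\subseteq\{s_i:s_iC=C\}$ and the dictionary between wall-separated bands of cells and entries of $T$, both of which the paper leaves implicit.
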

\begin{example}
There is $p_1=1$ maximal two-way contingency table with sum 1:
\[\begin{tikzpicture}[scale=0.5]
\draw[ultra thick](0,0)--(0,1)--(1,1)--(1,0)--(0,0);
\draw(0.5,0.5)node{\large$1$};
\end{tikzpicture}\]
There are $p_2=3$ maximal two-way contingency tables with sum 2:
\[\begin{tikzpicture}[scale=0.5]
\draw[ultra thick](10,0.5)--(10,1.5)--(11,1.5)--(11,0.5)--(10,0.5);
\draw(10.5,1)node{\large$2$};

\draw[ultra thick](3,0)--(3,2)--(5,2)--(5,0)--(3,0);
\draw(4,0)--(4,2);
\draw(3,1)--(5,1);
\draw(3.5,0.5)node{\large$1$};
\draw(3.5,1.5)node{\large$0$};
\draw(4.5,1.5)node{\large$1$};
\draw(4.5,0.5)node{\large$0$};

\draw[ultra thick](6.5,0)--(6.5,2)--(8.5,2)--(8.5,0)--(6.5,0);
\draw(7.5,0)--(7.5,2);
\draw(6.5,1)--(8.5,1);
\draw(7,0.5)node{\large$0$};
\draw(7,1.5)node{\large$1$};
\draw(8,1.5)node{\large$0$};
\draw(8,0.5)node{\large$1$};
\end{tikzpicture}\]
There are $p_3=19$ maximal two-way contingency tables with sum 3:
\[\begin{tikzpicture}[scale=0.5]
\draw[ultra thick](3,0)--(3,2)--(5,2)--(5,0)--(3,0);
\draw(4,0)--(4,2);
\draw(3,1)--(5,1);
\draw(3.5,1.5)node{\large$1$};
\draw(3.5,0.5)node{\large$0$};
\draw(4.5,0.5)node{\large$1$};
\draw(4.5,1.5)node{\large$1$};

\draw[ultra thick](6.5,0)--(6.5,2)--(8.5,2)--(8.5,0)--(6.5,0);
\draw(7.5,0)--(7.5,2);
\draw(6.5,1)--(8.5,1);
\draw(7,1.5)node{\large$0$};
\draw(7,0.5)node{\large$1$};
\draw(8,0.5)node{\large$1$};
\draw(8,1.5)node{\large$1$};

\draw[ultra thick](10,0)--(10,2)--(12,2)--(12,0)--(10,0);
\draw(11,0)--(11,2);
\draw(10,1)--(12,1);
\draw(10.5,1.5)node{\large$1$};
\draw(10.5,0.5)node{\large$1$};
\draw(11.5,0.5)node{\large$1$};
\draw(11.5,1.5)node{\large$0$};

\draw[ultra thick](13.5,0)--(13.5,2)--(15.5,2)--(15.5,0)--(13.5,0);
\draw(14.5,0)--(14.5,2);
\draw(13.5,1)--(15.5,1);
\draw(14,1.5)node{\large$1$};
\draw(14,0.5)node{\large$1$};
\draw(15,0.5)node{\large$0$};
\draw(15,1.5)node{\large$1$};

\draw[ultra thick](3,3.5)--(3,5.5)--(5,5.5)--(5,3.5)--(3,3.5);
\draw(4,3.5)--(4,5.5);
\draw(3,4.5)--(5,4.5);
\draw(3.5,5)node{\large$0$};
\draw(3.5,4)node{\large$1$};
\draw(4.5,4)node{\large$0$};
\draw(4.5,5)node{\large$2$};

\draw[ultra thick](6.5,3.5)--(6.5,5.5)--(8.5,5.5)--(8.5,3.5)--(6.5,3.5);
\draw(7.5,3.5)--(7.5,5.5);
\draw(6.5,4.5)--(8.5,4.5);
\draw(7,5)node{\large$1$};
\draw(7,4)node{\large$0$};
\draw(8,4)node{\large$2$};
\draw(8,5)node{\large$0$};

\draw[ultra thick](10,3.5)--(10,5.5)--(12,5.5)--(12,3.5)--(10,3.5);
\draw(11,3.5)--(11,5.5);
\draw(10,4.5)--(12,4.5);
\draw(10.5,5)node{\large$0$};
\draw(10.5,4)node{\large$2$};
\draw(11.5,4)node{\large$0$};
\draw(11.5,5)node{\large$1$};

\draw[ultra thick](13.5,3.5)--(13.5,5.5)--(15.5,5.5)--(15.5,3.5)--(13.5,3.5);
\draw(14.5,3.5)--(14.5,5.5);
\draw(13.5,4.5)--(15.5,4.5);
\draw(14,5)node{\large$2$};
\draw(14,4)node{\large$0$};
\draw(15,4)node{\large$1$};
\draw(15,5)node{\large$0$};

\draw[ultra thick](17,0)--(17,2)--(20,2)--(20,0)--(17,0);
\draw(18,0)--(18,2);
\draw(19,0)--(19,2);
\draw(17,1)--(20,1);
\draw(17.5,1.5)node{\large$0$};
\draw(18.5,1.5)node{\large$1$};
\draw(19.5,1.5)node{\large$0$};
\draw(17.5,0.5)node{\large$1$};
\draw(18.5,0.5)node{\large$0$};
\draw(19.5,0.5)node{\large$1$};

\draw[ultra thick](17,3.5)--(17,5.5)--(20,5.5)--(20,3.5)--(17,3.5);
\draw(18,3.5)--(18,5.5);
\draw(19,3.5)--(19,5.5);
\draw(17,4.5)--(20,4.5);
\draw(17.5,5)node{\large$1$};
\draw(18.5,5)node{\large$0$};
\draw(19.5,5)node{\large$1$};
\draw(17.5,4)node{\large$0$};
\draw(18.5,4)node{\large$1$};
\draw(19.5,4)node{\large$0$};

\draw[ultra thick](21.5,0)--(21.5,3)--(23.5,3)--(23.5,0)--(21.5,0);
\draw(22.5,0)--(22.5,3);
\draw(21.5,1)--(23.5,1);
\draw(21.5,2)--(23.5,2);
\draw(22,2.5)node{\large$1$};
\draw(23,2.5)node{\large$0$};
\draw(22,1.5)node{\large$0$};
\draw(23,1.5)node{\large$1$};
\draw(22,0.5)node{\large$1$};
\draw(23,0.5)node{\large$0$};

\draw[ultra thick](25,0)--(25,3)--(27,3)--(27,0)--(25,0);
\draw(26,0)--(26,3);
\draw(25,1)--(27,1);
\draw(25,2)--(27,2);
\draw(25.5,2.5)node{\large$0$};
\draw(26.5,2.5)node{\large$1$};
\draw(25.5,1.5)node{\large$1$};
\draw(26.5,1.5)node{\large$0$};
\draw(25.5,0.5)node{\large$0$};
\draw(26.5,0.5)node{\large$1$};

\draw[ultra thick](21.5,5.5)--(21.5,4.5)--(22.5,4.5)--(22.5,5.5)--(21.5,5.5);
\draw(22,5)node{\large$3$};

\draw[ultra thick](3,-1.5)--(6,-1.5)--(6,-4.5)--(3,-4.5)--(3,-1.5);
\draw(3,-2.5)--(6,-2.5);
\draw(3,-3.5)--(6,-3.5);
\draw(4,-1.5)--(4,-4.5);
\draw(5,-1.5)--(5,-4.5);
\draw(3.5,-2)node{\large{$0$}};
\draw(4.5,-2)node{\large{$0$}};
\draw(5.5,-2)node{\large{$1$}};
\draw(3.5,-3)node{\large{$0$}};
\draw(4.5,-3)node{\large{$1$}};
\draw(5.5,-3)node{\large{$0$}};
\draw(3.5,-4)node{\large{$1$}};
\draw(4.5,-4)node{\large{$0$}};
\draw(5.5,-4)node{\large{$0$}};

\draw[ultra thick](7.5,-1.5)--(10.5,-1.5)--(10.5,-4.5)--(7.5,-4.5)--(7.5,-1.5);
\draw(7.5,-2.5)--(10.5,-2.5);
\draw(7.5,-3.5)--(10.5,-3.5);
\draw(8.5,-1.5)--(8.5,-4.5);
\draw(9.5,-1.5)--(9.5,-4.5);
\draw(8,-2)node{\large{$0$}};
\draw(9,-2)node{\large{$1$}};
\draw(10,-2)node{\large{$0$}};
\draw(8,-3)node{\large{$0$}};
\draw(9,-3)node{\large{$0$}};
\draw(10,-3)node{\large{$1$}};
\draw(8,-4)node{\large{$1$}};
\draw(9,-4)node{\large{$0$}};
\draw(10,-4)node{\large{$0$}};

\draw[ultra thick](12,-1.5)--(15,-1.5)--(15,-4.5)--(12,-4.5)--(12,-1.5);
\draw(12,-2.5)--(15,-2.5);
\draw(12,-3.5)--(15,-3.5);
\draw(13,-1.5)--(13,-4.5);
\draw(14,-1.5)--(14,-4.5);
\draw(12.5,-2)node{\large{$0$}};
\draw(13.5,-2)node{\large{$0$}};
\draw(14.5,-2)node{\large{$1$}};
\draw(12.5,-3)node{\large{$1$}};
\draw(13.5,-3)node{\large{$0$}};
\draw(14.5,-3)node{\large{$0$}};
\draw(12.5,-4)node{\large{$0$}};
\draw(13.5,-4)node{\large{$1$}};
\draw(14.5,-4)node{\large{$0$}};

\draw[ultra thick](16.5,-1.5)--(19.5,-1.5)--(19.5,-4.5)--(16.5,-4.5)--(16.5,-1.5);
\draw(16.5,-2.5)--(19.5,-2.5);
\draw(16.5,-3.5)--(19.5,-3.5);
\draw(17.5,-1.5)--(17.5,-4.5);
\draw(18.5,-1.5)--(18.5,-4.5);
\draw(17,-2)node{\large{$0$}};
\draw(18,-2)node{\large{$1$}};
\draw(19,-2)node{\large{$0$}};
\draw(17,-3)node{\large{$1$}};
\draw(18,-3)node{\large{$0$}};
\draw(19,-3)node{\large{$0$}};
\draw(17,-4)node{\large{$0$}};
\draw(18,-4)node{\large{$0$}};
\draw(19,-4)node{\large{$1$}};

\draw[ultra thick](21,-1.5)--(24,-1.5)--(24,-4.5)--(21,-4.5)--(21,-1.5);
\draw(21,-2.5)--(24,-2.5);
\draw(21,-3.5)--(24,-3.5);
\draw(22,-1.5)--(22,-4.5);
\draw(23,-1.5)--(23,-4.5);
\draw(21.5,-2)node{\large{$1$}};
\draw(22.5,-2)node{\large{$0$}};
\draw(23.5,-2)node{\large{$0$}};
\draw(21.5,-3)node{\large{$0$}};
\draw(22.5,-3)node{\large{$0$}};
\draw(23.5,-3)node{\large{$1$}};
\draw(21.5,-4)node{\large{$0$}};
\draw(22.5,-4)node{\large{$1$}};
\draw(23.5,-4)node{\large{$0$}};

\draw[ultra thick](25.5,-1.5)--(28.5,-1.5)--(28.5,-4.5)--(25.5,-4.5)--(25.5,-1.5);
\draw(25.5,-2.5)--(28.5,-2.5);
\draw(25.5,-3.5)--(28.5,-3.5);
\draw(26.5,-1.5)--(26.5,-4.5);
\draw(27.5,-1.5)--(27.5,-4.5);
\draw(26,-2)node{\large{$1$}};
\draw(27,-2)node{\large{$0$}};
\draw(28,-2)node{\large{$0$}};
\draw(26,-3)node{\large{$0$}};
\draw(27,-3)node{\large{$1$}};
\draw(28,-3)node{\large{$0$}};
\draw(26,-4)node{\large{$0$}};
\draw(27,-4)node{\large{$0$}};
\draw(28,-4)node{\large{$1$}};
\end{tikzpicture}\]
\end{example}
\subsection{Sequence Transformation}
Note that the conditions in Corollary \ref{cor:maximaltable1} only depend on the configuration of nonzero entries of $T$.
Because of this, it will be helpful to ignore the specific values of the nonzero entries of $T$.
\begin{definition}
A \textit{pattern} is a two-way contingency table consisting of 0's and 1's.
\begin{enumerate}
    \item Let $\mathcal P_{m,0}$ denote the collection of all patterns with sum $m$.
    \item Let $\mathcal P_m$ denote the collection of all patterns with sum $m$ that satisfy the conditions of Corollary \ref{cor:maximaltable1}.
    \item For a two-way contingency table $T$, we define the pattern associated to $T$ to be the two-way contingency table formed by replacing all nonzero entries of $T$ with 1's.
\end{enumerate}
\end{definition}
For each pattern $P$ with sum $m$, there are exactly $\binom{n-1}{n-m}$ two-way contingency tables with sum $n$ whose associated pattern is $P$.
This is because two-way contingency tables with sum $n$ whose associated pattern is $P$ are in bijection with ways to surjectively distribute $n$ identical balls among $m$ distinguishable urns.

Then combining Corollary \ref{cor:pncombo} with Corollary \ref{cor:maximaltable1} gives the formulas
\[p_{n,0}=\sum_{m=0}^n\binom{n-1}{n-m}\abs{\mathcal P_{m,0}}\quad\text{and}\quad p_n=\sum_{m=0}^n\binom{n-1}{n-m}\abs{\mathcal P_m}.\]
Here we use the conventions $\binom{-1}{0}=1$ and also $\binom{k-1}{k}=0$ for $k\geq1$, since $\binom{n-1}{n-m}$ is counting ways to surjectively distribute $n$ identical balls among $m$ distinguishable urns.
We now invoke the identity
\[\binom{n-1}{n-m}=\f{m!}{n!}\sum_{k=m}^n\Sone{n}{k}\Stwo{k}{m},\]
where $\Sone{\cdot}{\cdot}$ denotes unsigned Stirling numbers of the first kind and $\Stwo{\cdot}{\cdot}$ denotes Stirling numbers of the second kind (see the exercise on Lah numbers in \cite{riordan}).
If we define
\[q_{k,0}=\sum_{m=0}^km!\Stwo{k}{m}\abs{\mathcal P_{m,0}}\quad\text{and}\quad q_k=\sum_{m=0}^km!\Stwo{k}{m}\abs{\mathcal P_m},\]
then we obtain the identities
\[p_{n,0}=\f{1}{n!}\sum_{k=0}^n\Sone{n}{k}q_{k,0}\quad\text{and}\quad p_n=\f{1}{n!}\sum_{k=0}^n\Sone{n}{k}q_k.\]
This reduces the computation of the sequences $\{p_{n,0}\}$ and $\{p_n\}$ to the computation of the sequences $\{q_{n,0}\}$ and $\{q_n\}$.
The following proposition gives a combinatorial interpretation of the sequences $\{q_{n,0}\}$ and $\{q_n\}$.
\begin{proposition}
\label{prop:combo_distrib_matrix}
The sequence $\{q_{n,0}\}$ counts the number of functions from $n$ distinguishable balls to a rectangular matrix of boxes, such that each row and column is nonempty.
The sequence $\{q_n\}$ counts the number of such functions that satisfy the following two conditions:
\begin{enumerate}
    \item There does not exist a pair of vertically adjacent nonempty boxes, each of which is the only nonempty box in its row.
    \item There does not exist a pair of horizontally adjacent nonempty boxes, each of which is the only nonempty box in its column.
\end{enumerate}
\end{proposition}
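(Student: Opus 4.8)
The plan is to prove both claims at once by a single bijection that decouples such a function into a pattern and a surjection. Fix $n\ge 0$. From a function $f$ sending $n$ distinguishable balls into an $r\times c$ matrix of boxes in which every row and every column is nonempty, I would build the $r\times c$ array $P$ having a $1$ in every box lying in the image of $f$ and a $0$ in every box missed by $f$. Since every row and column is nonempty under $f$, the array $P$ has strictly positive row sums and column sums, hence is a pattern; write $m$ for its sum, which equals the number of boxes in the image of $f$. Viewing $f$ as a map onto its image then exhibits it as a surjection from $\{1,\dots,n\}$ onto the $m$-element set of nonempty boxes of $P$. Conversely, a pattern $P$ of sum $m$ together with a surjection $\sigma$ from $\{1,\dots,n\}$ onto the set of nonempty boxes of $P$ reconstitutes $f$ as $\sigma$ followed by the inclusion of the nonempty boxes into all boxes of the grid carrying $P$; the image of this $f$ is exactly the set of nonempty boxes of $P$, which meets every row and column, so every row and column is nonempty. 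These two constructions are manifestly mutually inverse.

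Next I would count. The number of surjections from an $n$-element set onto a fixed $m$-element set is $m!\Stwo{n}{m}$ (partition $\{1,\dots,n\}$ into $m$ nonempty blocks in $\Stwo{n}{m}$ ways, then biject the blocks with the target in $m!$ ways). Sorting the functions first by the pattern $P$ they determine and then by the sum $m$ of $P$, the bijection shows that the number of such functions equals
\[\sum_{m=0}^{n}\abs{\mathcal P_{m,0}}\cdot m!\,\Stwo{n}{m},\]
which is the defining expression for $q_{n,0}$ (with $k=n$), proving the first assertion. For the second, I would observe that a box is nonempty under $f$ exactly when the corresponding entry of $P$ is nonzero, so conditions (1) and (2) imposed on $f$ are, word for word, the two conditions of Corollary \ref{cor:maximaltable1} imposed on $P$; hence $f$ satisfies (1) and (2) if and only if $P\in\mathcal P_m$. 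Restricting the bijection to such $f$ and such $P$ and repeating the count gives $\sum_{m=0}^{n}\abs{\mathcal P_m}\cdot m!\,\Stwo{n}{m}=q_n$.

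No step here is genuinely difficult; the two points that need attention are the degenerate case $n=0$ (where the only pattern is the empty $0\times0$ array and both sides equal $1$) and the precise bookkeeping that the surjection in the second coordinate must land on the $m$ nonempty boxes of $P$ rather than on the whole grid. It is this last point that makes the surjection count depend on $m$ alone — yielding the factor $m!\Stwo{n}{m}$ and matching the definitions of $q_{n,0}$ and $q_n$ — rather than a quantity that remembers the dimensions $r$ and $c$.
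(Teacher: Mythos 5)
Your proposal is correct and takes essentially the same route as the paper: the paper's proof likewise reads off $q_{n,0}=\sum_{k}k!\Stwo{n}{k}\abs{\mathcal P_{k,0}}$ (and the analogous formula for $q_n$) from the definition and interprets $k!\Stwo{n}{k}$ as the number of surjections from the $n$ balls onto the $k$ nonempty boxes prescribed by the pattern. You simply make the underlying bijection (function $\leftrightarrow$ pattern plus surjection onto its nonempty boxes) and the matching of conditions (1)--(2) with Corollary \ref{cor:maximaltable1} fully explicit, which the paper leaves implicit.
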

\begin{proof}
Relabeling indices (from $q_{k,0}$ to $q_{n,0}$ and from $q_k$ to $q_n$) gives
\[q_{n,0}=\sum_{k=0}^nk!\Stwo{n}{k}\abs{\mathcal P_{k,0}}\quad\text{and}\quad q_n=\sum_{k=0}^nk!\Stwo{n}{k}\abs{\mathcal P_k}.\]
Then the proposition follows from the fact that $k!\Stwo{n}{k}$ counts the number of surjective functions from $n$ distinguishable balls to the $k$ nonempty boxes prescribed by the pattern in $\mathcal P_{k,0}$ or $\mathcal P_k$.
\end{proof}
\subsection{Pairs of Weak Orders}
\begin{definition}
A \textit{weak order} on a set $S$ is a binary relation $\leq$ on $S$ that is transitive and has no incomparable pairs of elements (so for all $s,t\in S$, $s\leq t$ or $t\leq s$).
The total number of weak orders on $\{1,\ldots,n\}$ is denoted by $f_n$.
The sequence $\{f_n\}$ is known as the \textit{Fubini numbers} or the \textit{ordered Bell numbers}, and may be found at \cite[A000670]{oeis}.
\end{definition}
Given a weak order on a set $S$, it is possible for two elements $s,t\in S$ to be tied with each other, in the sense that both $s\leq t$ and $t\leq s$.
The ``tied'' relation is an equivalence relation and partitions the set $S$ into tied subsets.
Furthermore, the weak order on $S$ gives a total order on this partition.
In other words, we obtain an ordered set-partition $S=S_1\cup\cdots\cup S_k$.
Conversely, an ordered set-partition $S=S_1\cup\cdots\cup S_k$ determines a weak order on $S$ by setting $s\leq t$ whenever $s\in S_i$ and $t\in S_j$ and $i\leq j$.
Thus, there is a bijection between weak orders on $S$ and ordered set-partitions on $S$.
In what follows, we will freely translate back and forth between weak orders and ordered set-partitions.

In the special case where $S=\{1,\ldots,n\}$, weak orders on $\{1,\ldots,n\}$ correspond to ordered set-partitions $\{1,\ldots,n\}=S_1\cup\cdots\cup S_k$.
Summing over the possible values for $k$ gives the formula $f_n=\sum_{k=0}^nk!\Stwo{n}{k}$.
\begin{definition}
Let $((S_1,\ldots, S_k),(T_1,\ldots,T_{k^\prime}))$ be a pair of ordered set-partitions of $\{1,\ldots,n\}$.
A \textit{left consecutive embedding} is a consecutive pair $(i,i+1)_l$ such that $S_i\cup S_{i+1}\subseteq T_j$ for some (necessarily unique) $T_j$.
Similarly, a \textit{right consecutive embedding} is a consecutive pair $(i,i+1)_r$ such that $T_i\cup T_{i+1}\subseteq S_j$ for some (necessarily unique) $S_j$.
The phrase \textit{consecutive embedding} will refer to either a left consecutive embedding or a right consecutive embedding.
\end{definition}
Since weak orders can be viewed as ordered set-partitions, it makes sense to talk about consecutive embeddings in a pair of weak orders.
\begin{proposition}
\label{prop:qncomboweak}
The sequence $\{q_{n,0}\}$ counts the total number of pairs of weak orders on $\{1,\ldots,n\}$, so $q_{n,0}=f_n^2$.
The sequence $\{q_n\}$ counts the number of such pairs that have no consecutive embeddings.
\end{proposition}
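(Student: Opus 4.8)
The plan is to exhibit an explicit bijection between the functions counted by $q_{n,0}$ in Proposition \ref{prop:combo_distrib_matrix} --- functions from $n$ distinguishable balls to the boxes of a rectangular matrix with every row and column nonempty --- and pairs of weak orders on $\{1,\ldots,n\}$, and then to check that this bijection carries the two extra conditions that cut out the functions counted by $q_n$ onto the absence of, respectively, left and right consecutive embeddings.

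First I would construct the bijection. Given such a function $f$ into an $a\times b$ matrix, record separately the row and the column of each ball. Since every row is used, the assignment of balls to rows is a surjection of $\{1,\ldots,n\}$ onto the ordered list of $a$ rows, that is, an ordered set-partition $(S_1,\ldots,S_a)$ of $\{1,\ldots,n\}$, where $S_i$ is the set of balls in row $i$; similarly the columns give an ordered set-partition $(T_1,\ldots,T_b)$. Conversely, a pair $((S_1,\ldots,S_a),(T_1,\ldots,T_b))$ of ordered set-partitions of $\{1,\ldots,n\}$ determines the function sending ball $x$ to the box in row $i$ and column $j$ with $x\in S_i$ and $x\in T_j$; since each $S_i$ and each $T_j$ is nonempty, every row and every column of the $a\times b$ matrix is then nonempty. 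These two constructions are visibly mutually inverse. Since ordered set-partitions of $\{1,\ldots,n\}$ correspond bijectively to weak orders on $\{1,\ldots,n\}$, of which there are $f_n$, this proves $q_{n,0}=f_n^2$.

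Next I would translate the two conditions of Proposition \ref{prop:combo_distrib_matrix}. Under the bijection, the nonempty boxes in row $i$ correspond to exactly those parts $T_j$ that meet $S_i$; hence box $(i,j)$ is the \emph{unique} nonempty box of row $i$ if and only if $S_i\subseteq T_j$. Consequently, there exist vertically adjacent nonempty boxes $(i,j)$ and $(i+1,j)$, each of which is the only nonempty box in its row, if and only if $S_i\cup S_{i+1}\subseteq T_j$ for some $j$, which is precisely a left consecutive embedding $(i,i+1)_l$. So condition (1) holds if and only if the pair of weak orders has no left consecutive embedding, and the symmetric statement --- box $(i,j)$ is the unique nonempty box of column $j$ if and only if $T_j\subseteq S_i$ --- shows that condition (2) holds if and only if there is no right consecutive embedding. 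Combining the two, the bijection restricts to a bijection between the functions counted by $q_n$ and the pairs of weak orders with no consecutive embeddings.

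The argument is essentially bookkeeping with ordered set-partitions; the one place that repays a careful look is the observation that box $(i,j)$ is the only nonempty box in row $i$ exactly when $S_i\subseteq T_j$ (together with its dual for columns), since this is what converts the combinatorial conditions inherited from Corollary \ref{cor:maximaltable1} into the clean condition of having no consecutive embeddings. I therefore expect that translation step --- modest though it is --- to be the crux of the proof.
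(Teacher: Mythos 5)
Your proposal is correct and takes essentially the same approach as the paper: the same row/column bijection between ball-placement functions and pairs of ordered set-partitions, followed by the observation that failures of conditions (1) and (2) correspond exactly to left and right consecutive embeddings. The paper states that last correspondence in one line, whereas you spell out the key equivalence (box $(i,j)$ is the unique nonempty box of row $i$ iff $S_i\subseteq T_j$), which is a welcome elaboration but not a different argument.
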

\begin{proof}
Consider one of the $q_{n,0}$ functions from $n$ distinguishable balls to a rectangular matrix of boxes, such that each row and column is nonempty.
Considering the row of each ball gives a weak order on $\{1,\ldots,n\}$.
Similarly, considering the column of each ball gives a weak order on $\{1,\ldots,n\}$.
Conversely, given this pair of weak orders on $\{1,\ldots,n\}$, we can recover the dimensions of the matrix and the position of each ball.
In particular, the dimensions of the matrix are equal to the number of parts of the two ordered set-partitions, and the position of the $i$th ball in the matrix is given by the location of $i$ within the two ordered set-partitions.
This gives a bijection
\[\left\{\substack{\text{\small{Functions from $n$ distinguishable balls}}\\\text{\small{to a rectangular matrix of boxes such}}\\\text{\small{that each row and column is nonempty}}}\right\}\longleftrightarrow\left\{\substack{\text{\small{Pairs of weak orders}}\\\text{\small{on $\{1,\ldots,n\}$}}}\right\}.\]
In particular, the sequence $\{q_{n,0}\}$ counts the total number of pairs of weak orders on $\{1,\ldots,n\}$.

As for $\{q_n\}$, observe that this bijection takes failures of conditions 1 and 2 of Corollary \ref{cor:maximaltable1} to left and right consecutive embeddings.
\end{proof}
\begin{example}
There is $q_{1,0}=1$ pair of weak orders on $\{1\}$:
\[\begin{tabular}{|c|c|}\hline\begin{tabular}{c}1\end{tabular}&\begin{tabular}{c}1\end{tabular}\\\hline\end{tabular}\]
This pair of weak orders has no consecutive embeddings, so we also have $q_1=1$.
Here we write the first weak order in the first column, and the second weak order in the second column.
Tied elements of a weak order are written in the same row as each other.

There are $q_{2,0}=9$ pairs of weak orders on $\{1,2\}$:
\[
\begin{tabular}{|c|c|}\hline\begin{tabular}{c}1\\2\end{tabular}&\begin{tabular}{c}1\\2\end{tabular}\\\hline\end{tabular}\quad
\begin{tabular}{|c|c|}\hline\begin{tabular}{c}1\\2\end{tabular}&\begin{tabular}{c}2\\1\end{tabular}\\\hline\end{tabular}\quad
\begin{tabular}{|c|c|}\hline\begin{tabular}{c}2\\1\end{tabular}&\begin{tabular}{c}1\\2\end{tabular}\\\hline\end{tabular}\quad
\begin{tabular}{|c|c|}\hline\begin{tabular}{c}2\\1\end{tabular}&\begin{tabular}{c}2\\1\end{tabular}\\\hline\end{tabular}\quad
\begin{tabular}{|c|c|}\hline\begin{tabular}{c}1,2\end{tabular}&\begin{tabular}{c}1,2\end{tabular}\\\hline\end{tabular}
\]
\[
\begin{tabular}{|c|c|}\hline\begin{tabular}{c}1,2\end{tabular}&\begin{tabular}{c}1\\2\end{tabular}\\\hline\end{tabular}\quad
\begin{tabular}{|c|c|}\hline\begin{tabular}{c}1,2\end{tabular}&\begin{tabular}{c}2\\1\end{tabular}\\\hline\end{tabular}\quad
\begin{tabular}{|c|c|}\hline\begin{tabular}{c}1\\2\end{tabular}&\begin{tabular}{c}1,2\end{tabular}\\\hline\end{tabular}\quad
\begin{tabular}{|c|c|}\hline\begin{tabular}{c}2\\1\end{tabular}&\begin{tabular}{c}1,2\end{tabular}\\\hline\end{tabular}
\]
Of these, the five in the first row have no consecutive embeddings, whereas the first two in the second row each have a right consecutive embedding, and the last two in the second row each have a left consecutive embedding.
Thus, $q_2=5$.
Then
\[p_{2,0}=\frac{1}{2!}\left(\Sone{2}{1}q_{1,0}+\Sone{2}{2}q_{2,0}\right)=\frac{1}{2}(1+9)=5\qquad\text{and}\qquad p_2=\frac{1}{2!}\left(\Sone{2}{1}q_1+\Sone{2}{1}q_2\right)=\frac{1}{2}(1+5)=3.\]
\end{example}
\subsection{Inclusion-Exclusion}
Recall that we have reduced the computation of the sequences $\{p_{n,0}\}$ and $\{p_n\}$ to the computation of the sequences $\{q_{n,0}\}$ and $\{q_n\}$.
The formula $q_{n,0}=f_n^2$ from Proposition \ref{prop:qncomboweak} gives a fast polynomial-time algorithm for computing the sequence $\{q_{n,0}\}$.
We now restrict our attention to computing the sequence $\{q_n\}$.
This subsection will give an inclusion-exclusion formula for $q_n$.
\begin{definition}\ 
\begin{enumerate}
    \item A \textit{pair of weak orders on $\{1,\ldots,n\}$ with distinguished consecutive embeddings} consists of a pair of weak orders $(\leq,\leq^\prime)$ on $\{1,\ldots,n\}$, together with set $\mathcal C$ of consecutive embeddings in $(\leq,\leq^\prime)$.
    In other words, $\mathcal C$ is a subset of the set of all consecutive embeddings in $(\leq,\leq^\prime)$.
    \item A \textit{pairs of weak orders on $\{1,\ldots,n\}$ with $k$ distinguished consecutive embeddings} is a pair of weak orders on $\{1,\ldots,n\}$ with distinguished consecutive embeddings whose set $\mathcal C$ has cardinality $k$.
    \item Let $q_{n,k}$ count the number of pairs of weak orders on $\{1,\ldots,n\}$ with $k$ distinguished consecutive embeddings.
\end{enumerate}
\end{definition}
The notation $q_{n,k}$ does not conflict with the notation $q_{n,0}$ because pairs of weak orders with 0 distinguished consecutive embeddings are in bijection with (ordinary) pairs of weak orders.
\begin{example}
We will circle distinguished consecutive embeddings.
There are $q_{2,0}=9$ pairs of weak orders on $\{1,2\}$ with 0 distinguished consecutive embeddings:
\[
\begin{tabular}{|c|c|}\hline\begin{tabular}{c}1\\2\end{tabular}&\begin{tabular}{c}1\\2\end{tabular}\\\hline\end{tabular}\quad
\begin{tabular}{|c|c|}\hline\begin{tabular}{c}1\\2\end{tabular}&\begin{tabular}{c}2\\1\end{tabular}\\\hline\end{tabular}\quad
\begin{tabular}{|c|c|}\hline\begin{tabular}{c}2\\1\end{tabular}&\begin{tabular}{c}1\\2\end{tabular}\\\hline\end{tabular}\quad
\begin{tabular}{|c|c|}\hline\begin{tabular}{c}2\\1\end{tabular}&\begin{tabular}{c}2\\1\end{tabular}\\\hline\end{tabular}\quad
\begin{tabular}{|c|c|}\hline\begin{tabular}{c}1,2\end{tabular}&\begin{tabular}{c}1,2\end{tabular}\\\hline\end{tabular}
\]
\[
\begin{tabular}{|c|c|}\hline\begin{tabular}{c}1,2\end{tabular}&\begin{tabular}{c}1\\2\end{tabular}\\\hline\end{tabular}\quad
\begin{tabular}{|c|c|}\hline\begin{tabular}{c}1,2\end{tabular}&\begin{tabular}{c}2\\1\end{tabular}\\\hline\end{tabular}\quad
\begin{tabular}{|c|c|}\hline\begin{tabular}{c}1\\2\end{tabular}&\begin{tabular}{c}1,2\end{tabular}\\\hline\end{tabular}\quad
\begin{tabular}{|c|c|}\hline\begin{tabular}{c}2\\1\end{tabular}&\begin{tabular}{c}1,2\end{tabular}\\\hline\end{tabular}
\]
There are $q_{2,1}=4$ pairs of weak orders on $\{1,2\}$ with 1 distinguished consecutive embedding:
\[
\begin{tabular}{|c|c|}\hline\begin{tabular}{c}1,2\end{tabular}&\begin{tabular}{c}1\\2\end{tabular}\\\hline\end{tabular}
\begin{tikzpicture}[overlay]
 \draw[red, line width=1pt] (-0.5,0.1) ellipse (0.2cm and 0.4cm);
\end{tikzpicture}\quad
\begin{tabular}{|c|c|}\hline\begin{tabular}{c}1,2\end{tabular}&\begin{tabular}{c}2\\1\end{tabular}\\\hline\end{tabular}
\begin{tikzpicture}[overlay]
 \draw[red, line width=1pt] (-0.5,0.1) ellipse (0.2cm and 0.4cm);
\end{tikzpicture}\quad
\begin{tabular}{|c|c|}\hline\begin{tabular}{c}1\\2\end{tabular}&\begin{tabular}{c}1,2\end{tabular}\\\hline\end{tabular}
\begin{tikzpicture}[overlay]
 \draw[red, line width=1pt] (-1.8,0.1) ellipse (0.2cm and 0.4cm);
\end{tikzpicture}\quad
\begin{tabular}{|c|c|}\hline\begin{tabular}{c}2\\1\end{tabular}&\begin{tabular}{c}1,2\end{tabular}\\\hline\end{tabular}
\begin{tikzpicture}[overlay]
 \draw[red, line width=1pt] (-1.8,0.1) ellipse (0.2cm and 0.4cm);
\end{tikzpicture}
\]
\end{example}
\begin{example}
Figure 2 depicts a more complicated example which we will return to later.
The example in Figure 2 is a pair of weak orders on $\{1,2,3,4,5,6,7,8,9\}$ with 5 total consecutive embeddings, 4 of which are distinguished:
\begin{figure}[H]
    \centering
    \begin{tabular}{|c|c|}\hline\begin{tabular}{c}2\\7\\5\\1,3,4,6,8,9\end{tabular}&\begin{tabular}{c}8\\6\\1,4\\3\\2,5,7\\9\end{tabular}\\\hline\end{tabular}
    \begin{tikzpicture}[overlay]
    \draw[red, line width=1pt] (-2.87,0.55) ellipse (0.2cm and 0.4cm);
    \draw[red, line width=1pt] (-2.87,0.12) ellipse (0.2cm and 0.4cm);
    \draw[red, line width=1pt] (-0.9,0.13) ellipse (0.4cm and 0.4cm);
    \draw[red, line width=1pt] (-0.9,0.96) ellipse (0.2cm and 0.4cm);
    \end{tikzpicture}
    \caption{A pair of weak orders with distinguished consecutive embeddings.}
\end{figure}
\end{example}
The first few values of $q_{n,k}$ are given in Table 1.
There are two main observations from this table that we will prove below.
First, $q_{n,k}=0$ for $k>n$ (Corollary \ref{cor:qnk=0}).
Second, $\sum_{k=0}^n(-1)^kq_{n,k}=q_n$ (Lemma \ref{lem:inclusion-exclusion}).
\begin{table}[H]
    \centering
    \begin{tabular}{c||c||c|c|c|c|c}
        $n$&$q_n$&$q_{n,0}$&$q_{n,1}$&$q_{n,2}$&$q_{n,3}$&$q_{n,4}$\\\hline\hline
        $0$&1&1&0&0&0&0\\
        $1$&1&1&0&0&0&0\\
        $2$&5&9&4&0&0&0\\
        $3$&97&169&84&12&0&0\\
        $4$&3365&5625&2812&600&48&0\\
        $5$&177601&292681&145380&34380&4320&240
    \end{tabular}
    \caption{Small Values of $q_n$ and $q_{n,k}$}
    \label{table:qnk}
\end{table}
We will need the notion of a chain in a pair of weak orders with distinguished consecutive embeddings.
\begin{definition}
In a pair of weak orders on $\{1,\ldots,n\}$ with distinguished consecutive embeddings, a \textit{chain} is a maximal sequence of overlapping distinguished consecutive embeddings.
\end{definition}
The example in Figure 2 has one chain on the left side and two chains on the right side.
The following lemma is a key property of chains.
\begin{lemma}
\label{lem:chainoverlap}
In a pair of weak orders on $\{1,\ldots,n\}$ with distinguished consecutive embeddings, any two chains have no elements of $\{1,\ldots,n\}$ in common, even if the two chains are on different sides.
\end{lemma}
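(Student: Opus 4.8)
The statement to prove is Lemma~\ref{lem:chainoverlap}: in a pair of weak orders on $\{1,\ldots,n\}$ with distinguished consecutive embeddings, any two chains share no elements of $\{1,\ldots,n\}$, even when on opposite sides.

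My plan is to first understand exactly which elements of $\{1,\ldots,n\}$ a single distinguished consecutive embedding ``touches,'' and then argue that overlapping embeddings on the same side and embeddings on opposite sides impose incompatible structure if they shared a ground element. Write the two weak orders as ordered set-partitions $(S_1,\ldots,S_k)$ and $(T_1,\ldots,T_{k'})$. A left consecutive embedding $(i,i+1)_\ell$ says $S_i\cup S_{i+1}\subseteq T_j$ for a unique $T_j$; the natural notion of the elements ``used'' by this embedding is exactly $S_i\cup S_{i+1}$. Similarly a right embedding $(i,i+1)_r$ uses $T_i\cup T_{i+1}$. A chain is a maximal run of overlapping distinguished consecutive embeddings; since all embeddings in one chain are on one side (overlap means sharing an index $i$, and left and right indices live on different ``sides''), a left-chain covering consecutive positions $a,a+1,\ldots,b$ uses precisely $S_a\cup S_{a+1}\cup\cdots\cup S_b$, a union of a contiguous block of $S$-parts; a right-chain uses a contiguous block of $T$-parts.

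First I would handle two chains on the same side: a left-chain uses a contiguous block of $S$-parts and another left-chain uses a disjoint contiguous block of $S$-parts (disjoint because the chains are maximal, so their index sets cannot be adjacent or overlapping — if they overlapped or abutted they'd be one chain), and since $(S_1,\ldots,S_k)$ is a set-partition, disjoint blocks of parts contain disjoint ground elements. That case is essentially immediate from maximality. The substantive case is a left-chain and a right-chain: suppose a left-chain $L$ and a right-chain $R$ both contain some element $x\in\{1,\ldots,n\}$. Then $x$ lies in some $S_i$ with $i$ in $L$'s block, and in some $T_j$ with $j$ in $R$'s block. I would pick an actual distinguished embedding from each chain incident to $x$ and derive a contradiction. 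The key point: a left embedding $(i,i+1)_\ell$ requires $S_i\cup S_{i+1}$ to sit inside a \emph{single} $T$-part, so in particular all of $S_i$ and all of $S_{i+1}$ lie in one $T_j$; meanwhile being in a right-chain forces $x$'s $T$-part to be one of two \emph{adjacent} $T$-parts both of which are squeezed inside a single $S$-part. Tracing these containments, the element $x$ would have to witness both ``its $S$-part, together with a neighboring $S$-part, lives in one $T$-part'' and ``its $T$-part, together with a neighboring $T$-part, lives in one $S$-part,'' and I would show these force a nontrivial collapse: the $S$-part containing $x$ would be forced equal to (or strictly larger than, contradiction) a different $S$-part, or one of the partitions would fail to be a partition.

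The cleanest way to make that contradiction rigorous is a counting/size argument along the lines used in the paper's later inclusion–exclusion: if $S_i\cup S_{i+1}\subseteq T_j$ and also $T_j\cup T_{j\pm1}\subseteq S_{i'}$ for the $S$-part $S_{i'}$ containing $x$, then since $x\in S_i$ (so $i'=i$, say) we get $T_j\subseteq S_i$, but also $S_{i+1}\subseteq T_j\subseteq S_i$, forcing $S_{i+1}=\emptyset$, contradicting that ordered set-partitions have nonempty parts (equivalently, contradicting that the $s_i$-wall of the contingency picture is between two nonempty rows). I would write this out carefully for all four sign/side combinations (the neighbor could be $S_{i-1}$ or $S_{i+1}$, and likewise for $T$), but they are symmetric. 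The main obstacle I anticipate is purely bookkeeping: pinning down precisely which parts a chain ``uses'' and being careful that ``overlapping'' embeddings in the definition of a chain forces all members onto the same side, so that the same-side case and the opposite-side case are genuinely exhaustive; once the set of used ground elements is correctly identified as a contiguous block of parts of one partition, the emptiness contradiction in the opposite-side case is short.
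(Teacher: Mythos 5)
Your proposal is correct and follows essentially the same route as the paper: reduce to a single left embedding and a single right embedding sharing an element, observe that the unique $T$-part containing $S_i\cup S_{i+1}$ must then be one of the two $T$-parts of the right embedding (and symmetrically for the $S$-parts), and derive a contradiction from the nonemptiness of parts. The paper phrases the final contradiction as a strict cyclic chain of cardinality inequalities, whereas you phrase it as $S_{i+1}\subseteq T_{j}\subseteq S_i$ forcing $S_{i+1}=\varnothing$; these are the same argument in different clothing.
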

\begin{proof}
It is clear that two chains on the same side have no elements of $\{1,\ldots,n\}$ in common.
It remains to show that any two consecutive embeddings on different sides have no elements of $\{1,\ldots,n\}$ in common.
Let $(i,i+1)_l$ be a left consecutive embedding and let $(i^\prime,i^\prime+1)_r$ be a right consecutive embedding.
Then $S_i\cup S_{i+1}\subseteq T_j$ for some (necessarily unique) $T_j$ and $T_{i^\prime}\cup T_{i^\prime+1}\subseteq S_{j^\prime}$ for some (necessarily unique) $S_{j^\prime}$.
If these two consecutive embeddings have an element of $\{1,\ldots,n\}$ in common, then $T_j$ must be either $T_{i^\prime}$ or $T_{i^\prime+1}$, and $S_{j^\prime}$ must be either $S_i$ or $S_{i+1}$.
Then
\[\abs{S_i\cup S_{i+1}}\leq\abs{T_j}<\abs{T_{i^\prime}\cup T_{i^\prime+1}}\leq\abs{S_{j^\prime}}<\abs{S_i\cup S_{i+1}}\]
which is a contradiction.
\end{proof}
We can now determine $q_{n,k}$ for $k\geq n-1$.
\begin{proposition}
\label{prop:inequality}
If $k\geq n$ and $k\geq1$, then $q_{n,k}=0$.
If $k=n-1$ and $k\geq1$, then $q_{n,k}=2\cdot n!$.
\end{proposition}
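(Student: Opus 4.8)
The plan is to exploit the chain decomposition of the distinguished consecutive embeddings. Write the pair of weak orders as ordered set-partitions $(S_1,\ldots,S_a)$ and $(T_1,\ldots,T_b)$ of $\{1,\ldots,n\}$, let $\mathcal{C}$ be the distinguished set with $\abs{\mathcal{C}}=k$, and decompose $\mathcal{C}$ into its chains $C_1,\ldots,C_r$; by maximality these partition $\mathcal{C}$, so $k=\ell_1+\cdots+\ell_r$ where $\ell_t$ denotes the number of consecutive embeddings in $C_t$. First I would record two facts. (i) Each chain lies entirely on one side: a left consecutive embedding and a right consecutive embedding share no element of $\{1,\ldots,n\}$ (this is exactly what the proof of Lemma~\ref{lem:chainoverlap} establishes), so they cannot overlap. (ii) A chain $C_t$ consisting of $\ell_t$ left consecutive embeddings $(i,i+1)_l,(i+1,i+2)_l,\ldots,(i+\ell_t-1,i+\ell_t)_l$ forces the nonempty blocks $S_i,S_{i+1},\ldots,S_{i+\ell_t}$ all to lie inside a single block $T_j$ of the other order (consecutive embeddings in the chain share a nonempty $S$, which pins down the same $T_j$); hence the set $E_t$ of elements of $\{1,\ldots,n\}$ lying in the blocks covered by $C_t$ satisfies $\abs{E_t}\geq\ell_t+1$, and symmetrically for right chains. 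By Lemma~\ref{lem:chainoverlap} the sets $E_1,\ldots,E_r$ are pairwise disjoint, so
\[n\;\geq\;\sum_{t=1}^{r}\abs{E_t}\;\geq\;\sum_{t=1}^{r}(\ell_t+1)\;=\;k+r.\]

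For the first assertion I would note that if $k\geq1$ then $r\geq1$, so the displayed inequality gives $n\geq k+1$, that is, $k\leq n-1$; hence there are no configurations at all when $k\geq\max(n,1)$, so $q_{n,k}=0$. For the second assertion I would take $k=n-1\geq1$ and run the inequality backwards: from $n\geq k+r=n-1+r$ we get $r=1$, and then $\abs{E_1}\geq\ell_1+1=k+1=n$ forces $\abs{E_1}=n$ with all $n$ blocks covered by the single chain being singletons that exhaust $\{1,\ldots,n\}$. Say this chain is on the left (the right case is symmetric under swapping the two orders). Then the left order has $a=n$ singleton blocks, i.e., it is an arbitrary linear order on $\{1,\ldots,n\}$, and all of $S_1,\ldots,S_n$ lie in a common block $T_j$ whose union is $\{1,\ldots,n\}$, forcing $T_j=\{1,\ldots,n\}$ and hence $b=1$: the right order is trivial. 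Conversely, for any linear order on the left together with the trivial order on the right, the only consecutive embeddings are the $n-1$ left ones, so the unique way to reach $k=n-1$ is to distinguish all of them; this yields $n!$ configurations, and the symmetric construction yields another $n!$. Since $k=n-1\geq1$ gives $n\geq2$, a trivial ordered set-partition has one block whereas a linear order has $n\geq2$ blocks, so the two families are disjoint and $q_{n,n-1}=2\cdot n!$.

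The heart of the argument is fact (ii): that a chain of $\ell$ consecutive embeddings on one side is confined to a single block on the other side and therefore covers $\ell+1$ distinct nonempty blocks, giving $\abs{E_t}\geq\ell_t+1$. Once this is combined with the cross-side disjointness from Lemma~\ref{lem:chainoverlap} to obtain $n\geq k+r$, both halves of the proposition reduce to the short counting arguments above, so I expect the write-up to be routine after fact (ii).
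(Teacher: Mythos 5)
Your proof is correct and takes essentially the same route as the paper: both arguments use the chain decomposition together with Lemma~\ref{lem:chainoverlap} to show that the chains occupy at least $k+r$ disjoint elements of $\{1,\ldots,n\}$ (the paper phrases this as $c+k\leq n$ where $c$ is the number of chains), and then both halves of the proposition follow by specializing this inequality. The only difference is one of detail, not of substance — you spell out the converse verification and the disjointness of the two families in the $k=n-1$ case, which the paper leaves implicit.
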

\begin{proof}
Consider a pair of weak orders on $\{1,\ldots,n\}$ with $k\geq1$ distinguished consecutive embeddings.
Let $c\geq1$ be the number of chains.
Note that $c+k$ counts the number of parts that are contained in the chains, because each chain has one more part than distinguished consecutive embedding.
Note that these $c+k$ parts have no elements in common, since any two parts within a chain have no elements in common, and Lemma \ref{lem:chainoverlap} states that distinct chains have no elements in common.
This gives the inequality $c+k\leq n$.

If $k\geq n$, then this is impossible, which shows that $q_{n,k}=0$.
Now suppose that $k=n-1$.
Then $c=1$, so there is one chain consisting of $n-1$ overlapping distinguished consecutive embeddings.
Thus, one side (either left or right) has $n$ parts in one of the $n!$ possible permutations, and the other side has just one part consisting of the unordered set $\{1,\ldots,n\}$.
This gives $2\cdot n!$ possibilities.
\end{proof}
\begin{corollary}
\label{cor:qnk=0}
If $k>n$, then $q_{n,k}=0$.
\end{corollary}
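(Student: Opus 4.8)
The plan is to deduce this immediately from Proposition \ref{prop:inequality}. Suppose $k > n$. Since $n \geq 0$, the strict inequality $k > n$ forces $k \geq 1$, and of course $k \geq n$ as well. Thus the hypotheses ``$k \geq n$ and $k \geq 1$'' of the first case of Proposition \ref{prop:inequality} are satisfied, and that proposition gives $q_{n,k} = 0$. There is essentially no obstacle here: the corollary is strictly weaker than the statement just established. The only reason to isolate it is that ``$k > n$ implies $q_{n,k} = 0$'' is the clean form one wants when manipulating the finite alternating sums $\sum_{k=0}^{n}(-1)^k q_{n,k}$ in the next subsection.

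Alternatively, one could reprove it directly from the counting argument underlying Proposition \ref{prop:inequality}, without passing through its statement. In a pair of weak orders on $\{1,\ldots,n\}$ with $k \geq 1$ distinguished consecutive embeddings, let $c \geq 1$ be the number of chains. The $c + k$ parts lying in chains are pairwise disjoint nonempty subsets of $\{1,\ldots,n\}$ (disjointness within a single chain is immediate, and disjointness across chains is Lemma \ref{lem:chainoverlap}), so $c + k \leq n$; with $c \geq 1$ this gives $k \leq n - 1$. When $k > n$ we have $k \geq 1$, so such a configuration would satisfy $k \leq n - 1 < k$, a contradiction. Hence $q_{n,k} = 0$. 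I would present the one-line deduction from Proposition \ref{prop:inequality} as the actual proof, since it is cleanest.
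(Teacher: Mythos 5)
Your primary argument is exactly the paper's proof: note that $k>n$ with $n\geq0$ forces $k\geq1$ and $k\geq n$, then apply the first case of Proposition \ref{prop:inequality}. The alternative direct counting argument you sketch is also correct, but the one-line deduction you chose to present is the same as the paper's.
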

\begin{proof}
If $k>n$, then $k\geq1$ so Proposition \ref{prop:inequality} gives $q_{n,k}=0$.
\end{proof}
We now give the inclusion-exclusion formula for $q_n$ in terms of $q_{n,k}$.
\begin{lemma}
\label{lem:inclusion-exclusion}
We have the inclusion-exclusion formula
\[q_n=\sum_{k=0}^n(-1)^kq_{n,k}.\]
\end{lemma}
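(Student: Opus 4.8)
The plan is to prove this by a standard inclusion--exclusion (double-counting) argument, organizing the pairs of weak orders with distinguished consecutive embeddings according to their underlying pair of weak orders. First I would note that for each pair of weak orders $(\leq,\leq^\prime)$ on $\{1,\ldots,n\}$, writing $E(\leq,\leq^\prime)$ for the (finite) set of \emph{all} consecutive embeddings in $(\leq,\leq^\prime)$, a pair of weak orders with $k$ distinguished consecutive embeddings built on top of $(\leq,\leq^\prime)$ is, by the very definition of a distinguished set $\mathcal C$, exactly a choice of a $k$-element subset of $E(\leq,\leq^\prime)$. Summing over the underlying pair therefore gives
\[q_{n,k}=\sum_{(\leq,\leq^\prime)}\binom{\abs{E(\leq,\leq^\prime)}}{k},\]
where the outer sum ranges over all pairs of weak orders on $\{1,\ldots,n\}$.

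Next I would substitute this into $\sum_{k=0}^n(-1)^kq_{n,k}$ and interchange the two (finite) sums, obtaining
\[\sum_{k=0}^n(-1)^kq_{n,k}=\sum_{(\leq,\leq^\prime)}\sum_{k=0}^n(-1)^k\binom{\abs{E(\leq,\leq^\prime)}}{k}.\]
At this point I would invoke Corollary \ref{cor:qnk=0} to observe that $\abs{E(\leq,\leq^\prime)}\leq n$ for every pair of weak orders: otherwise, distinguishing all of the consecutive embeddings of such a pair would produce a pair of weak orders with more than $n$ distinguished consecutive embeddings, contradicting $q_{n,k}=0$ for $k>n$ (alternatively this is immediate from the chain bound $c+k\leq n$ in the proof of Proposition \ref{prop:inequality}). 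Hence the inner sum already includes every nonzero term $\binom{\abs{E(\leq,\leq^\prime)}}{k}$, so by the binomial theorem it equals $(1-1)^{\abs{E(\leq,\leq^\prime)}}$, which is $1$ when $\abs{E(\leq,\leq^\prime)}=0$ and $0$ otherwise.

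Therefore $\sum_{k=0}^n(-1)^kq_{n,k}$ counts exactly the pairs of weak orders on $\{1,\ldots,n\}$ having no consecutive embeddings, which is $q_n$ by Proposition \ref{prop:qncomboweak}. I do not expect any real obstacle here: the whole argument is a routine binomial cancellation once the key enumerative observation is in place, namely that $q_{n,k}$ counts the $k$-element subsets of the set of consecutive embeddings as the underlying pair varies. The only point requiring a small amount of care is the truncation of the alternating sum at $k=n$ rather than at $\abs{E(\leq,\leq^\prime)}$, which is precisely where Corollary \ref{cor:qnk=0} (equivalently, the chain bound from Lemma \ref{lem:chainoverlap}) is used.
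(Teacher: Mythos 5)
Your proof is correct and follows essentially the same route as the paper: both arguments group the terms by the underlying pair of weak orders, observe via Corollary \ref{cor:qnk=0} that the total number of consecutive embeddings is at most $n$, and apply the alternating binomial identity $\sum_k(-1)^k\binom{m}{k}=[m=0]$. Your write-up is just a more explicit version of the paper's double-counting argument.
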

\begin{proof}
If a pair of weak orders on $\{1,\ldots,n\}$ has $m$ total consecutive embeddings, then $m\leq n$ by Corollary \ref{cor:qnk=0}, so that pair of weak orders is counted $\sum_{k=0}^m(-1)^k\binom{m}{k}$ times by the sum.
This is equal to 0 if $m\geq1$ and equal to 1 if $m=0$.
Thus, the sum counts pairs of weak orders with no consecutive embeddings.
\end{proof}
\subsection{The Final Count}
It remains to count pairs of weak orders on $\{1,\ldots,n\}$ with $k$ distinguished consecutive embeddings.
To deal with the fact that consecutive embeddings can overlap, we will work with chains.
The first step of our formula will be to sum over the number of chains, which we will call $c$, and the total number of elements from $\{1,\ldots,n\}$ to put inside these chains, which we will call $t$.
Keep in mind Lemma \ref{lem:chainoverlap}, which states that distinct chains have no elements in common, even if they are on different sides.
In other words, each of the $t$ elements will end up in exactly one of the $c$ chains.

There are $\binom{n}{t}$ ways to choose which $t$ elements from $\{1,\ldots,n\}$ to put inside the $c$ chains.
In the proof of Proposition \ref{prop:inequality} we saw that the $c$ chains contain $c+k$ total parts, so $t\geq c+k$.
Then there are $(c+k)!\Stwo{t}{c+k}$ ways to partition these $t$ elements into an ordered sequence of $c+k$ parts.

Now we cut the ordered sequence of $c+k$ parts into $c$ chains, each of which must consist at least 2 parts.
Since the parts are already ordered, this just requires choosing how many parts to put into each chain.
The number of ways to do this equals the number of ways to distribute $c+k$ identical balls among $c$ distinguishable urns, each of which must get at least 2 balls.
This is the same as the number of ways to surjectively distribute $k$ identical balls among $c$ distinguishable urns.
There are $\binom{k-1}{k-c}$ such ways.

At this point, we have an ordered sequence of $c$ chains, along with $n-t$ non-chain elements.
We can minimize the distinction between the $c$ chains and the $n-t$ elements by collapsing each chain into a single element, as depicted in Figure 3.
For example, consider the top right chain in Figure 3.
Both the left and right side of this chain get collapsed to a single element (which is labeled $c_2$ in Figure 3).
The key difference between the left $c_2$ and the right $c_2$ is that the right $c_2$ must be in a part by itself.
Thus, the circling in the collapsed form indicates which side of the collapsed chain must be in a part by itself.
\begin{figure}[H]
\centering
\begin{tabular}{|c|c|}\hline\begin{tabular}{c}2\\7\\5\\1,3,4,6,8,9\end{tabular}&\begin{tabular}{c}8\\6\\1,4\\3\\2,5,7\\9\end{tabular}\\\hline\end{tabular}
\begin{tikzpicture}[overlay]
\draw[red, line width=1pt] (-2.87,0.55) ellipse (0.2cm and 0.4cm);
\draw[red, line width=1pt] (-2.87,0.12) ellipse (0.2cm and 0.4cm);
\draw[red, line width=1pt] (-0.9,0.13) ellipse (0.4cm and 0.4cm);
\draw[red, line width=1pt] (-0.9,0.96) ellipse (0.2cm and 0.4cm);
\end{tikzpicture}
$\quad\longrightarrow\quad$
\begin{tabular}{|c|c|}\hline\begin{tabular}{c}$c_1$\\$c_2$,$c_3$,9\end{tabular}&\begin{tabular}{c}$c_2$\\$c_3$\\$c_1$\\9\end{tabular}\\\hline\end{tabular}
\begin{tikzpicture}[overlay]
\draw[red, line width=1pt] (-2.2,0.26) ellipse (0.2cm and 0.2cm);
\draw[red, line width=1pt] (-0.7,0.68) ellipse (0.2
cm and 0.2cm);
\draw[red, line width=1pt] (-0.7,0.26) ellipse (0.2cm and 0.2cm);
\end{tikzpicture}
\caption{Collapsing chains.}
\end{figure}
We will need a generalization of the Fubini numbers that allows for the possibility of elements that are required to be placed in a part by themselves.
For $0\leq k\leq n$, let $f_{n,k}$ count the number of weak orders on $\{1,\ldots,n\}$ where each of the elements $1,\ldots,k$ must be placed in a part by itself.
These generalized Fubini numbers have the formula $f_{n,k}=\sum_{l=k}^nl!\Stwo{n-k}{l-k}$, because there are $\Stwo{n-k}{l-k}$ ways to partition the set $\{k+1,\ldots,n\}$ into $l-k$ parts, and $l!$ ways to order these $l-k$ parts along with the remaining $k$ elements $\{1,\ldots,k\}$.
The ordinary Fubini numbers are the special case when $k=0$.

Returning to the situation prior to Figure 3, we now sum over the number of chains on the left side, which we will call $j$.
This forces the number of chains on the right side to be $c-j$.
We already have an ordering on the $c$ chains, so we will take the first $j$ chains to be the chains on the left side, and the last $c-j$ chains to be the chains on the right side.
After collapsing each of the $c$ chains into a single element on each side, we have $n-t+c$ total elements to distribute.
On the left side, $j$ of these elements are marked as having to be placed in a part by themselves.
On the right side, $c-j$ of these elements are marked as having to be placed in a part by themselves.
Then there are $f_{n-t+c,j}f_{n-t+c,c-j}$ ways to produce a valid pair of weak orders.
However, since we already have an ordering for the $c$ chains, we must divide by $j!$ and $(c-j)!$.
This gives the formula
\[q_{n,k}=\sum_{c=0}^k\sum_{t=c+k}^n\underbrace{\binom{n}{t}}_{\substack{\text{choose $t$}\\\text{elements}}}\underbrace{(c+k)!\Stwo{t}{c+k}}_{\substack{\text{partition the $t$}\\\text{elements into an}\\\text{ordered sequence}\\\text{of $c+k$ parts}}}\underbrace{\binom{k-1}{k-c}}_{\substack{\text{split the}\\\text{$c+k$ parts}\\\text{into $c$ chains}}}\sum_{j=0}^c\f{\overbrace{f_{n-t+c,j}f_{n-t+c,c-j}}^{\substack{\text{collapsing each chain into}\\\text{a single element on each side}\\\text{makes $n$ become $n-t+c$}}}}{\underbrace{j!\,(c-j)!}_{\substack{\text{the chains are already}\\\text{ordered at this point,}\\\text{so the numerator is}\\\text{overcounting by this factor}}}}.\]
We should point out that the second sum is zero for $c>n-k$.
Plugging this into Lemma \ref{lem:inclusion-exclusion} and rearranging the summations gives the formula
\begin{equation}\label{eq:qn}q_n=\sum_{c=0}^{\lfloor n/2\rfloor}\sum_{t=2c}^n\binom{n}{t}\pq{\sum_{k=c}^{t-c}(-1)^k(c+k)!\Stwo{t}{c+k}\binom{k-1}{k-c}}\pq{\sum_{j=0}^c\frac{f_{n-t+c,j}f_{n-t+c,c-j}}{j!\,(c-j)!}}.\end{equation}
This proves our formula for $p_n$ (equation \ref{eq:pnform} from the introduction).
\begin{theorem}
\label{thm:pn_formula}
For each nonnegative integer $n$, we have the formula
\[p_n=\frac{1}{n!}\sum_{m=0}^n\Sone{n}{m}\sum_{c=0}^{\lfloor m/2\rfloor}\sum_{t=2c}^m\binom{m}{t}\pq{\sum_{k=c}^{t-c}(-1)^k(c+k)!\Stwo{t}{c+k}\binom{k-1}{k-c}}\pq{\sum_{j=0}^c\frac{f_{m-t+c,j}f_{m-t+c,c-j}}{j!\,(c-j)!}},\]
where the values $f_{n,k}=\sum_{l=k}^nl!\Stwo{n-k}{l-k}$ are generalizations of the Fubini numbers.
\end{theorem}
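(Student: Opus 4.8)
The plan is to assemble the formula from the ingredients built up in this section. By the identity derived in Section 3.2, $p_n=\frac{1}{n!}\sum_{m=0}^n\Sone{n}{m}q_m$, so it suffices to prove that $q_n$ equals the triple sum appearing inside the claimed formula --- that is, to prove equation (\ref{eq:qn}) --- and then to substitute and relabel the summation index from $n$ to $m$. By Lemma \ref{lem:inclusion-exclusion}, $q_n=\sum_{k=0}^n(-1)^kq_{n,k}$, so the heart of the matter is an exact formula for $q_{n,k}$, the number of pairs of weak orders on $\{1,\dots,n\}$ equipped with a set $\mathcal C$ of $k$ distinguished consecutive embeddings.

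First I would stratify the count of such objects by the number $c$ of chains and by the number $t$ of elements of $\{1,\dots,n\}$ that lie in some chain. Lemma \ref{lem:chainoverlap} is used here to know that the chains are pairwise disjoint as subsets of $\{1,\dots,n\}$ --- even across the two sides --- so the phrase ``the $t$ chain-elements'' is well defined, and each of these $t$ elements lies in exactly one chain. The count then factors as a product of four quantities: $\binom{n}{t}$ ways to choose the chain-elements; $(c+k)!\,\Stwo{t}{c+k}$ ways to arrange them into an ordered sequence of $c+k$ parts (the $c$ chains together have exactly $c+k$ parts, since each chain has one more part than its number of distinguished embeddings, as in the proof of Proposition \ref{prop:inequality}); $\binom{k-1}{k-c}$ ways to cut this ordered sequence of $c+k$ parts into $c$ consecutive chains each consisting of at least two parts (equivalently, to distribute $k$ identical balls surjectively into $c$ distinguishable urns); and finally a factor that, after collapsing each chain to a single marked super-element, counts the ways to build the pair of weak orders. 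Summing that last factor over $j$, the number of chains placed on the left side, gives $\sum_{j=0}^c f_{n-t+c,j}\,f_{n-t+c,c-j}/(j!\,(c-j)!)$, where the division by $j!\,(c-j)!$ corrects for the fact that the $c$ chains already carry a linear order at that stage. This yields the stated formula for $q_{n,k}$, with the usual conventions on binomial coefficients handling the boundary cases.

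The step I expect to be the main obstacle is justifying the chain-collapse: that replacing each of the $c$ chains by a single element, marked on the side whose far end must lie alone in its part, is a bijection between (an ordered list of $c$ chains together with $n-t$ loose elements, arranged into a valid pair of weak orders with exactly the prescribed $k$ distinguished consecutive embeddings) and (pairs of weak orders on an $(n-t+c)$-element set in which $j$ prescribed elements on the left and $c-j$ prescribed elements on the right are forced to lie in singleton parts). One must check that no consecutive embedding is accidentally created or destroyed by the collapse, that within its own side a chain occupies a run of consecutive parts while on the opposite side all its elements lie in a single part, and that the ``forced singleton'' constraint on the collapsed super-element exactly records the condition that the far side of the chain occupy a part by itself. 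This is precisely why the generalized Fubini numbers $f_{n,k}=\sum_{l=k}^n l!\Stwo{n-k}{l-k}$ are the right tool, and Lemma \ref{lem:chainoverlap} is invoked once more to see that the markings on the two sides are independent, so the count factors as $f_{n-t+c,j}\cdot f_{n-t+c,c-j}$.

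Finally, plugging the $q_{n,k}$ formula into $q_n=\sum_{k=0}^n(-1)^kq_{n,k}$ and interchanging the order of summation --- moving the sums over $c$ and $t$ to the outside and collecting the $k$-dependent factors $(-1)^k(c+k)!\Stwo{t}{c+k}\binom{k-1}{k-c}$ into an inner parenthesized sum with $k$ running from $c$ to $t-c$ --- produces equation (\ref{eq:qn}); the ranges $0\le c\le\lfloor n/2\rfloor$ and $2c\le t\le n$ follow from the constraints $c\le k$ and $c+k\le t\le n$. Substituting equation (\ref{eq:qn}), with its index $n$ renamed to $m$, into $p_n=\frac{1}{n!}\sum_{m=0}^n\Sone{n}{m}q_m$ then gives exactly the formula in the statement.
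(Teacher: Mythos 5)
Your proposal follows essentially the same route as the paper: reduce to equation (\ref{eq:qn}) via the Stirling-transform identity from Section 3.2 and Lemma \ref{lem:inclusion-exclusion}, stratify $q_{n,k}$ by the number of chains $c$ and chain-elements $t$ using Lemma \ref{lem:chainoverlap}, obtain the same four factors (including the chain-collapse to generalized Fubini numbers with the $j!\,(c-j)!$ correction), and reassemble by interchanging sums. The paper's own treatment of the chain-collapse step is no more rigorous than what you sketch, so your identification of it as the delicate point is apt but does not constitute a gap relative to the published argument.
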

Equation \ref{eq:qn} has been written so that there are several independent components inside the formula.
The next subsection shows how we can precompute these components to speed up computation.
\subsection{The Algorithm}
We now give an algorithm to compute the values $p_1,\ldots,p_N$ in $O(N^2)$ memory and $O(N^3)$ time.
The statement ``$O(N^2)$ memory and $O(N^3)$ time'' assumes that storing a number requires constant memory and that arithmetic operations require constant time.
This assumption is not true in practice since the computation involves numbers that grow super-exponentially.
However, the number of digits of these numbers grows polynomially.
Then storing a number requires polynomial memory and arithmetic operations require polynomial time.
Thus, even with these practical considerations, the algorithm uses polynomial memory and polynomial time.
Empirically, the runtime for the algorithm grows like $N^{4.75}$.

Here is the algorithm:
\begin{enumerate}
    \item Compute the values $\displaystyle\binom{n}{k},\Sone{n}{k},\Stwo{n}{k}$, for $0\leq k\leq n\leq N$ using the standard recurrences.
    \item Compute the values $f_{n,k}=\displaystyle\sum_{l=k}^nl!\Stwo{n-k}{l-k}$ for $0\leq k\leq n\leq N$.
    \item Compute the values $g_{n,c}=\displaystyle\sum_{j=0}^c\f{f_{n,j}f_{n,c-j}}{j!\,(c-j)!}$ for $0\leq c\leq n\leq N$.
    \item Compute the values $h_{t,c}=\displaystyle\sum_{k=c}^{t-c}(-1)^k(c+k)!\Stwo{t}{c+k}\binom{k-1}{k-c}$ for $0\leq2c\leq t\leq n$.
    
    In this calculation, note that $\binom{-1}{0}=1$ and also $\binom{k-1}{k}=0$ for $k\geq1$.
    We will give a faster and simpler algorithm for $h_{t,c}$ in the next subsection.
    
    \item Compute the values $q_n=\displaystyle\sum_{c=0}^{\lfloor n/2\rfloor}\sum_{t=2c}^n\binom{n}{t}h_{t,c}g_{n-t+c,c}$ for $0\leq n\leq N$.
    \item Compute the values $p_n=\displaystyle\frac{1}{n!}\sum_{k=0}^n\Sone{n}{k}q_k$ for $0\leq n\leq N$.
\end{enumerate}
I have written two versions of this algorithm in Java (see Web Resources at the end of this paper).

Version 1 is a straightforward single-threaded implementation of this algorithm, with the faster and simpler step 4 given in the next subsection.
Version 1 can compute the values $p_1,\ldots,p_{1000}$ in 40 minutes on a laptop with a 2.4 GHz processor using 2 GB of memory.
Version 1 is limited by memory.

Version 2 is a memory-optimized multi-threaded implementation of the algorithm that was used to compute the values $p_1,\ldots,p_{5000}$.
The computation took 9.4 days on a server with 20 CPU cores, each running at 2.4 GHz.
Version 2 achieves $O(N)$ memory and $O(N^3)$ time (with the same caveat as before).
It achieves $O(N)$ memory by not storing precomputed two-dimensional arrays and instead recomputing values on the fly.
Version 2 maintains the $O(N^3)$ time of Version 1 by rewriting formula (\ref{eq:qn}) for $q_n$ so that the outside sum is a sum over the values of $n-t+c$, which avoids expensive recomputation of the values $g_{n-t+c,c}$.
Version 2 is limited by time, rather than by memory.
\subsection{Combinatorial Interpretation of \texorpdfstring{$h_{t,c}$}{Lg}}
We conclude this section by giving a combinatorial interpretation of $h_{t,c}$.
This will give a fast and simple algorithm for $h_{t,c}$, as well as a bound on $h_{t,c}$ that will be used when proving asymptotic formulas.
\begin{proposition}
\label{prop:htc_combo}
For all integers $0\leq2c\leq t$, the quantity
$(-1)^ch_{t,c}/2^c$ counts the number of ordered set-partitions of $\{1,\ldots,t\}$ into $c$ parts of even cardinality.
In particular, if $t$ is odd, then $h_{t,c}=0$.
\end{proposition}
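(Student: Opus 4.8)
The plan is to read $h_{t,c}$ as a signed count over ``chain structures'' and collapse the resulting sum using exponential generating functions, exactly the kind of structure that produced $h_{t,c}$ in the first place.

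\textbf{Reindexing.} First I would set $p=c+k$, the total number of parts, so that $\binom{k-1}{k-c}=\binom{p-c-1}{p-2c}$ is the number of compositions of $p$ into $c$ parts each at least $2$, while $p!\Stwo{t}{p}$ is the number of ordered set-partitions of $\{1,\dots,t\}$ into $p$ nonempty parts. This rewrites the definition as
\[
(-1)^{c}h_{t,c}=\sum_{p=2c}^{t}(-1)^{p}\,p!\,\Stwo{t}{p}\,\binom{p-c-1}{p-2c},
\]
which I would interpret as a signed count, with sign $(-1)^{\#\text{parts}}$, over all ways to order-partition $\{1,\dots,t\}$ into $p$ parts and then cut that ordered list of parts, in order, into $c$ consecutive chains each containing at least two parts --- that is, over chain structures with exactly $c$ chains.

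\textbf{Factoring over chains and an EGF computation.} Conditioning on the subsets $S_1,\dots,S_c$ of $\{1,\dots,t\}$ occupied by the successive chains (an ordered set-partition $\{1,\dots,t\}=S_1\sqcup\dots\sqcup S_c$), the weight $(-1)^{\#\text{parts}}$ factors across chains, so the signed count equals $\sum\prod_{i=1}^{c}\phi(\abs{S_i})$, where $\phi(s)=\sum_{a\ge 2}(-1)^{a}a!\Stwo{s}{a}$; note $\phi(0)=\phi(1)=0$, so ``at least two parts'' and ``at least two elements'' are automatically compatible. Passing to exponential generating functions and using $\sum_{s}a!\Stwo{s}{a}x^{s}/s!=(e^{x}-1)^{a}$, the generating function of $\phi$ is
\[
\Phi(x)=\sum_{a\ge 2}(-1)^{a}(e^{x}-1)^{a}=\frac{(e^{x}-1)^{2}}{1+(e^{x}-1)}=e^{x}+e^{-x}-2=2(\cosh x-1),
\]
so $\phi(s)=2$ when $s$ is a positive even integer and $\phi(s)=0$ otherwise. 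By the product rule for labelled structures, the signed count over chain structures with $c$ chains is therefore $2^{c}$ times the number of ordered set-partitions $\{1,\dots,t\}=S_1\sqcup\dots\sqcup S_c$ with every $\abs{S_i}$ even, which is precisely the claimed interpretation of $(-1)^{c}h_{t,c}/2^{c}$. When $t$ is odd no such partition exists (a sum of even numbers is even), forcing $h_{t,c}=0$.

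\textbf{Main obstacle.} The delicate point is the factorization step: I must justify carefully that, after conditioning on the underlying sets of the chains, the weight $(-1)^{\#\text{parts}}$ really splits as $\prod_i\phi(\abs{S_i})$ and that summing this over ordered set-partitions of $\{1,\dots,t\}$ is exactly what $\Phi(x)^{c}$ encodes. I would also keep track of the conventions $\binom{-1}{0}=1$ and $\binom{k-1}{k}=0$ and check the degenerate cases $c=0$ and $t=0$ separately, but those are routine; everything else is formal power-series manipulation.
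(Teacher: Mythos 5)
Your proposal is correct and follows essentially the same route as the paper: both interpret $k!\Stwo{t}{k}\binom{k-c-1}{k-2c}$ as ordered set-partitions grouped into $c$ consecutive chains, condition on the ordered set-partition $S_1\sqcup\cdots\sqcup S_c$ of chain supports so the sign $(-1)^{\#\text{parts}}$ factors across chains, and then evaluate the per-chain signed sum $\sum_{a\geq2}(-1)^aa!\Stwo{s}{a}$, which is $2$ for $s$ even and positive and $0$ otherwise. The only (cosmetic) difference is that you evaluate this inner sum via the EGF identity $\sum_{a\geq2}(-1)^a(e^x-1)^a=2(\cosh x-1)$, whereas the paper substitutes $x=-1$ into $\sum_k\Stwo{n}{k}(x)_k=x^n$.
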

\begin{proof}
First, we rewrite $h_{t,c}$ as
\[h_{t,c}=(-1)^c\sum_{k=2c}^t(-1)^kk!\Stwo{t}{k}\binom{k-c-1}{k-2c}.\]
Recall that the quantity
\[(c+k)!\Stwo{t}{c+k}\binom{k-1}{k-c}\]
from equation (\ref{eq:qn}) counts the number of ways to choose an ordered set-partition of $\{1,\ldots,t\}$ into $c+k$ parts, which are further grouped into $c$ chains of at least two parts each.
Then the quantity
\[k!\Stwo{t}{k}\binom{k-c-1}{k-2c}\]
counts the number of ways to choose an ordered set-partition of $\{1,\ldots,t\}$ into $k$ parts, which are further grouped into $c$ consecutive blocks of at least two parts each.
Here's example with $c=3$, $k=7$, and $t=10$:
\[\begin{tabular}{c|c|c}\begin{tabular}{|c|c|}\hline 3&5\\\hline\end{tabular}&\begin{tabular}{|c|c|c|}\hline 4&8,10&2\\\hline\end{tabular}&\begin{tabular}{|c|c|}\hline 1,7,9&6\\\hline\end{tabular}\end{tabular}\]
Considering the $c$ blocks gives an ordered set-partition of $\{1,\ldots,t\}$ into $c$ parts.
If we let $\Omega$ denote the collection of all ordered set-partitions of $\{1,\ldots,t\}$ into $c$ parts, then we can write
\[k!\Stwo{t}{k}\binom{k-c-1}{k-2c}=\sum_{\pi\in\Omega}a_\pi,\]
where $a_\pi$ counts the number of ways that $\pi\in\Omega$ arises as the $c$ blocks of an ordered set-partition of $\{1,\ldots,t\}$ into $k$ parts, which are further grouped into $c$ consecutive blocks of at least two parts each.
If $\pi$ is the ordered set-partition $\{1,\ldots,t\}=\pi_1\cup\cdots\cup \pi_c$, then we can write $a_\pi$ as
\[a_\pi=\sum_{\substack{k=k_1+\cdots+k_c\\k_i\geq2}}\prod_{i=1}^ck_i!\Stwo{\abs{\pi_i}}{k_i}.\]
Thus,
\[k!\Stwo{t}{k}\binom{k-c-1}{k-2c}=\sum_{\pi\in\Omega}\sum_{\substack{k=k_1+\cdots+k_c\\k_i\geq2}}\prod_{i=1}^ck_i!\Stwo{\abs{\pi_i}}{k_i}\]
Putting this all together gives
\begin{align*}
    h_{t,c}&=(-1)^c\sum_{k=2c}^t(-1)^k\sum_{\pi\in\Omega}\sum_{\substack{k=k_1+\cdots+k_c\\k_i\geq2}}\prod_{i=1}^ck_i!\Stwo{\abs{\pi_i}}{k_i}\\
    &=(-1)^c\sum_{\pi\in\Omega}\sum_{k=2c}^t\sum_{\substack{k=k_1+\cdots+k_c\\k_i\geq2}}\prod_{i=1}^c(-1)^{k_i}k_i!\Stwo{\abs{\pi_i}}{k_i}\\
    &=(-1)^c\sum_{\pi\in\Omega}\sum_{\substack{k_1,\ldots,k_c\\2\leq k_i\leq\abs{\pi_i}}}\prod_{i=1}^c(-1)^{k_i}k_i!\Stwo{\abs{\pi_i}}{k_i}\\
    &=(-1)^c\sum_{\pi\in\Omega}\prod_{i=1}^c\sum_{k_i=2}^{\abs{\pi_i}}(-1)^{k_i}k_i!\Stwo{\abs{\pi_i}}{k_i}.
\end{align*}
We now invoke the identity
\[\sum_{k=2}^n(-1)^kk!\Stwo{n}{k}=\begin{cases}2&\text{if }n\text{ is even},\\0&\text{if }n\text{ is odd}\end{cases}\]
which follows from setting $x=-1$ in the identity
\[\sum_{k=0}^n\Stwo{n}{k}(x)_k=x^n.\]
where $(x)_k=x(x-1)\cdots(x-k+1)$ denotes the falling factorial.
Thus, if each $\pi_i$ has even cardinality, then $\pi$ contributes $(-1)^c2^c$ to $h_{t,c}$, and otherwise $\pi$ contributes nothing to $h_{t,c}$.
The result follows.
\end{proof}
\begin{corollary}
\label{cor:htc_estimate}
For all integers $0\leq2c\leq t$, we have the inequality $\abs{h_{t,c}}\leq2^cc^t$.
\end{corollary}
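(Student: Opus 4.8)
The plan is to deduce this directly from the combinatorial interpretation in Proposition \ref{prop:htc_combo}. By that proposition, $(-1)^c h_{t,c}/2^c$ equals the number of ordered set-partitions of $\{1,\ldots,t\}$ into $c$ parts of even cardinality; in particular this number is nonnegative, so
\[\abs{h_{t,c}}=2^c\cdot\#\{\text{ordered set-partitions of }\{1,\ldots,t\}\text{ into }c\text{ parts of even cardinality}\}.\]
It therefore suffices to bound the number of such ordered set-partitions by $c^t$.

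The key step is an injection. Given an ordered set-partition $\{1,\ldots,t\}=\pi_1\cup\cdots\cup\pi_c$, define a function $\varphi\colon\{1,\ldots,t\}\to\{1,\ldots,c\}$ by sending each element to the index of the block containing it. Distinct ordered set-partitions (into $c$ parts) produce distinct functions, since the ordered set-partition can be recovered as $\pi_i=\varphi^{-1}(i)$. Hence the number of ordered set-partitions of $\{1,\ldots,t\}$ into $c$ parts --- with or without the even-cardinality restriction --- is at most the number of functions $\{1,\ldots,t\}\to\{1,\ldots,c\}$, which is $c^t$. Combining this with the displayed equality gives $\abs{h_{t,c}}\le 2^c c^t$.

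I do not expect a real obstacle here; the only thing to be careful about is the degenerate case $c=0$, where the bound reads $\abs{h_{t,0}}\le 0^t$, interpreted as $1$ when $t=0$ and $0$ when $t>0$. This matches Proposition \ref{prop:htc_combo}, since there is exactly one ordered set-partition of the empty set into zero parts and none of a nonempty set into zero parts, so the argument goes through verbatim with the convention $0^0=1$. One could also simply note that $h_{t,0}=\Stwo{t}{0}$ directly from the defining sum (using $\binom{-1}{0}=1$ and $\binom{k-1}{k}=0$ for $k\ge1$), which again gives the claim.
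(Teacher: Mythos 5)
Your proof is correct and matches the paper's argument exactly: both invoke Proposition \ref{prop:htc_combo} to interpret $\abs{h_{t,c}}/2^c$ as the number of ordered set-partitions of $\{1,\ldots,t\}$ into $c$ parts of even cardinality, and bound that count by $c^t$, the number of functions $\{1,\ldots,t\}\to\{1,\ldots,c\}$. Your extra check of the $c=0$ case is a harmless addition the paper leaves implicit.
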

\begin{proof}
Proposition \ref{prop:htc_combo} states that $\abs{h_{t,c}}/2^c$ counts the number of ordered set-partitions of $\{1,\ldots,t\}$ into $c$ parts of even cardinality.
This is bounded above by $c^t$ (the number of functions $\{1,\ldots,t\}\to\{1,\ldots,c\}$).
\end{proof}
The combinatorial interpretation of $h_{t,c}$ given in Proposition \ref{prop:htc_combo} also gives a recurrence for $h_{t,c}$.
It will be convenient to introduce an auxiliary sequence.
\begin{definition}
We define integers $T(n,k)$ for $0\leq k\leq n$ by
\begin{itemize}
    \item $T(n,0)=0$ for $n\geq1$,
    \item $T(n,n)=1$ for $n\geq0$,
    \item $T(n,k)=T(n-1,k-1)+k^2T(n-1,k)$ for $1\leq k\leq n-1$.
\end{itemize}
The values $T(n,k)$ maybe found at \cite[A036969]{oeis}.
\end{definition}
\begin{proposition}
\label{prop:hcomb}
For all integers $0\leq k\leq n$, we have $h_{2n,k}=(-1)^k(2k)!\,T(n,k)$.
\end{proposition}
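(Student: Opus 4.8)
The plan is to prove the identity $h_{2n,k}=(-1)^k(2k)!\,T(n,k)$ by showing that both sides count essentially the same objects, using the combinatorial interpretation from Proposition \ref{prop:htc_combo}. By that proposition, $(-1)^kh_{2n,k}/2^k$ is the number of ordered set-partitions of $\{1,\ldots,2n\}$ into $k$ parts of even cardinality (and this is the only case where $t$ is even, so there is no parity obstruction). So it suffices to show that the number of such ordered set-partitions equals $2^{-k}(2k)!\,T(n,k)$, equivalently that $T(n,k)$ counts ordered set-partitions of $\{1,\ldots,2n\}$ into $k$ parts of even cardinality, divided by $(2k)!/2^k = (2k-1)!! \cdot k!$... wait, let me recompute: $(2k)!/2^k$. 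So I want to show that the number of ordered set-partitions of $\{1,\ldots,2n\}$ into $k$ parts of even size equals $\frac{(2k)!}{2^k}\,T(n,k)$.

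First I would recall the known combinatorial interpretation of the OEIS sequence A036969: $T(n,k)$ is the number of partitions of $\{1,\ldots,2n\}$ into $k$ blocks of even cardinality, where the blocks are \emph{unordered}. (This is the "central factorial" triangle.) Granting this, an ordered such set-partition is obtained by choosing one of $k!$ orderings, giving $k!\,T(n,k)$ — but that is off from $\frac{(2k)!}{2^k}T(n,k)$ by a factor of $\frac{(2k)!}{2^k k!} = (2k-1)!!$. To close this gap I would instead verify the recurrence directly: I would establish that if $E(m,k)$ denotes the number of \emph{ordered} set-partitions of $\{1,\ldots,m\}$ into $k$ nonempty parts each of even size, then $E(2n,k)$ satisfies $E(2n,k) = \frac{(2k)!}{2^k}\,T(n,k)$ by checking it satisfies the same boundary conditions and recurrence as the right-hand side. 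For the recurrence, I would condition on the block (among the $k$ ordered blocks) containing a distinguished pair, say $\{2n-1,2n\}$: cases according to whether that block is exactly $\{2n-1,2n\}$ (removing it leaves an ordered partition of $\{1,\ldots,2n-2\}$ into $k-1$ even blocks, in one of $k$ positions — contributing the $T(n-1,k-1)$ term after tracking the $(2k)!/2^k$ normalization) versus the block having size $\geq4$ (contributing the $k^2 T(n-1,k)$ term). The factor $k^2$ should emerge naturally: one factor of $k$ from inserting the two removed elements back into one of $k$ blocks, and cancellation of $(2k)!/2^k$ ratios producing the second.

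Alternatively, and perhaps more cleanly, I would prove it purely algebraically from the recursion already available for $h_{t,c}$. From Proposition \ref{prop:htc_combo}'s proof we have
\[
h_{t,c}=(-1)^c\sum_{\pi\in\Omega}\prod_{i=1}^c\Bigl(\sum_{j=2}^{|\pi_i|}(-1)^j j!\,\Stwo{|\pi_i|}{j}\Bigr),
\]
and the inner sum is $2$ when $|\pi_i|$ is even and $0$ otherwise. Extracting from this a recurrence on $t=2n$ by removing the block containing the element $2n$ and splitting on whether that block contains exactly one other fixed element gives exactly a recursion matching $T(n,k)=T(n-1,k-1)+k^2T(n-1,k)$ after dividing out the $(-1)^k(2k)!$ prefactor; the $k^2$ arises from $k$ choices of which ordered block to enlarge times $k$... (the bookkeeping here is the delicate point). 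The base cases $h_{0,0}=1=(-1)^0 0!\,T(0,0)$ and $h_{2n,0}=0=(-1)^0 0!\,T(n,0)$ for $n\geq1$ are immediate from the combinatorial description (no ordered partition of a nonempty set into $0$ parts).

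The main obstacle I anticipate is the careful bookkeeping of the normalization constant $(2k)!/2^k$ as blocks are added and removed, and verifying that the factor $k^2$ in the recurrence for $T(n,k)$ matches what the combinatorial recursion produces — in particular correctly accounting for the \emph{ordered} nature of the parts versus the \emph{unordered} A036969 interpretation, so that the $(2k-1)!!$ discrepancy is genuinely absorbed. Once the recurrence and both base cases are checked to agree with those defining $T(n,k)$, the proof is complete by induction on $n$.
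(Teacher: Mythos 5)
Your overall strategy coincides with the paper's: use Proposition \ref{prop:htc_combo} to reduce the claim to showing that $S(n,k):=(-1)^kh_{2n,k}/2^k$, the number of ordered set-partitions of $\{1,\ldots,2n\}$ into $k$ parts of even cardinality, equals $(2k)!\,T(n,k)/2^k$, and then verify base cases together with the recurrence $S(n,k)=k^2S(n-1,k)+k(2k-1)S(n-1,k-1)$. You are also right to distrust the ``unordered count times $k!$'' shortcut, since $T(n,k)$ does not count unordered even partitions. The problem is that the step carrying the entire proof --- the recurrence --- is exactly where your sketch breaks down. You propose to condition on ``the block containing the distinguished pair $\{2n-1,2n\}$,'' but in an ordered even set-partition the elements $2n-1$ and $2n$ need not lie in the same block, so your case analysis misses all partitions in which they are separated. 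The cases you do describe yield coefficients of the form $k$ and $k$, not the required $k^2$ and $k(2k-1)$, and the hope that the missing factors ``emerge naturally'' from cancellation of the $(2k)!/2^k$ normalization is not substantiated; the normalization is a constant prefactor and cannot manufacture an extra factor of $k$ or $2k-1$.

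The missing idea is a parity-correcting bijection. Starting from an ordered even set-partition of $\{1,\ldots,2n-2\}$ into $k$ blocks, one chooses an ordered pair of (possibly equal) indices $i,j$ ($k^2$ choices), flips the smallest element of $S_i\cup S_j$ between the two blocks, and then adds $2n-1$ to $S_i$ and $2n$ to $S_j$; the flip restores even parity precisely when $i\neq j$, which is what accounts for the partitions in which $2n-1$ and $2n$ are separated. The $k(2k-1)$ term arises the same way after inserting an empty block at one of $k$ positions and requiring at least one of $i,j$ to be that position ($2k-1$ choices). The construction is reversible, which is what makes the count exact. Without this (or an equivalent device, e.g.\ an exponential-generating-function identity for even ordered set-partitions), the recurrence is asserted rather than proved. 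One further gap: you only check the $k=0$ base case, but since the recurrence holds only for $1\leq k\leq n-1$ you must also verify the diagonal case $S(n,n)=(2n)!/2^n$ (ordered perfect matchings) to start the induction.
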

The significance of Proposition \ref{prop:hcomb} is that it reduces the computation of $h_{t,c}$ in step 4 of the algorithm from cubic time to quadratic time.
Let $S(n,k)=(-1)^kh_{2n,k}/2^k$, which by Proposition \ref{prop:htc_combo} counts the number of ordered set-partitions of $\{1,\ldots,2n\}$ into $k$ parts of even cardinality.
The proof of Proposition \ref{prop:hcomb} boils down to showing $S(n,k)$ satisfies the recurrence $S(n,k)=k^2S(n-1,k)+k(2k-1)S(n-1,k-1)$.
This recurrence is known \cite[A241171]{oeis}.
We include a proof of Proposition \ref{prop:hcomb} for completeness.
\begin{proof}[Proof of Proposition \ref{prop:hcomb}]
Let $S(n,k)$ be defined as above.
It suffices to show that $S(n,k)=(2k)!\,T(n,k)/2^k$, because then $h_{2n,k}=(-1)^k2^kS(n,k)=(-1)^k(2k)!\,T(n,k)$.
We need to prove the following three properties:
\begin{enumerate}
    \item $S(n,0)=0$ for $n\geq1$.
    \item $S(n,n)=(2n)!/2^n$ for $n\geq0$.
    \item $S(n,k)=k^2S(n-1,k)+k(2k-1)S(n-1,k-1)$ for $1\leq k\leq n-1$.
\end{enumerate}
The first property follows from the observation that if $n\geq1$, then there are no ordered set-partitions of $\{1,\ldots,2n\}$ into 0 parts.
For the second property, note that an ordered set-partition of $\{1,\ldots,2n\}$ into $n$ parts of even cardinality consists of pairing up $\{1,\ldots,2n\}$ into an ordered sequence of $n$ unordered pairs.
There are $(2n)!/2^n$ such pairings.
It remains to show the third property (the recurrence).
Consider the following two ways of constructing an ordered set-partition of $\{1,\ldots,2n\}$ into $k$ parts of even cardinality:
\begin{itemize}
    \item Start with an ordered set-partition $\{1,\ldots,2n-2\}=S_1\cup\cdots\cup S_k$ into $k$ parts of even cardinality, and choose two (possibly equal) indices $i,j\in\{1,\ldots,k\}$.
    Consider the smallest element of the union $S_i\cup S_j$ and flip its position (if it was in $S_i$, then move it to $S_j$, and vice versa).
    Finally, add $2n-1$ to $S_i$ and add $2n$ to $S_j$.
    \item Start with an ordered set-partition $\{1,\ldots,2n-2\}=S_1\cup\cdots\cup S_{k-1}$ into $k-1$ parts of even cardinality, and choose an index $k_0\in\{1,\ldots,k\}$.
    Inserting the empty set at position $k_0$ and relabeling gives $\{1,\ldots,2n-2\}=S_1\cup\cdots\cup S_k$, which would be an ordered set-partition except that $S_{k_0}$ is empty.
    Now choose two (possibly equal) indices $i,j\in\{1,\ldots,k\}$, at least one of which is equal to $k_0$.
    Consider the smallest element of the union $S_i\cup S_j$ and flip its position (if $i=j=k_0$, then do nothing).
    Finally, add $2n-1$ to $S_i$ and add $2n$ to $S_j$.
\end{itemize}
We claim that every ordered set-partition of $\{1,\ldots,2n\}$ into $k$ parts of even cardinality arises uniquely from one of these two constructions.
This is because the process can be reversed:
\begin{itemize}
    \item Start with an ordered set-partition of $\{1,\ldots,2n\}=S_1\cup\cdots\cup S_k$ into $k$ parts of even cardinality.
    Let $S_i$ be the set containing $2n-1$ and let $S_j$ be the set containing $2n$.
    Remove $2n-1$ from $S_i$ and remove $2n$ from $S_j$.
    Consider the smallest element of the union $S_i\cup S_j$ and flip its position (if $S_i=S_j=\varnothing$, then do nothing).
    If $S_i$ and $S_j$ are still nonempty, then we are in the first case.
    If either $S_i$ or $S_j$ is now empty, then we are in the second case.
\end{itemize}
    In the first case, there are $S(n-1,k)$ ways of choosing the initial ordered set-partition, and $k^2$ ways of choosing the indices $i,j$.
    In the second case, there are $S(n-1,k-1)$ ways of choosing the initial ordered set-partition, $k$ ways of choosing the index $k_0$, and $2k-1$ ways of choosing the indices $i,j$.
    This proves the recurrence $S(n,k)=k^2S(n-1,k)+k(2k-1)S(n-1,k-1)$.
    \end{proof}
\section{Asymptotics}
\subsection{Asymptotics of \texorpdfstring{$f_{n,k}$}{Lg}}
We first give a formula for the generalized Fubini numbers $f_{n,k}$ in terms of the ordinary Fubini numbers $f_n$.
\begin{proposition}
\label{prop:fubiniformula}
If $k\geq0$ and $n\geq0$ are integers, then
\[f_{n+k,k}=k!\sum_{n_0+\cdots+n_k=n}\frac{n!}{n_0!\cdots n_k!}f_{n_0}\cdots f_{n_k},\]
where each $n_i$ is a nonnegative integer.
\end{proposition}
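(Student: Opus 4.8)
The plan is to prove the identity bijectively, by interpreting both sides as counts of the same family of ordered set-partitions. Recall from the discussion preceding Figure 3 that for $0\le k\le m$ the generalized Fubini number $f_{m,k}$ counts the weak orders on $\{1,\ldots,m\}$, equivalently the ordered set-partitions of $\{1,\ldots,m\}$, in which each of $1,\ldots,k$ forms a part by itself. So let $\mathcal F$ denote the set of ordered set-partitions of $\{1,\ldots,n+k\}$ in which each of the elements $1,\ldots,k$ is a singleton part, while the remaining $n$ elements $k+1,\ldots,n+k$ are unrestricted; then $\abs{\mathcal F}=f_{n+k,k}$ by definition, and the goal is to show $\abs{\mathcal F}$ equals the right-hand side. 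Note that since the special elements are all singletons, every part of such an ordered set-partition is either one of the $k$ singletons $\{1\},\ldots,\{k\}$ or else a nonempty subset of $\{k+1,\ldots,n+k\}$.

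Next I would set up the bijection. Given an element of $\mathcal F$, the $k$ singleton parts occur in some left-to-right order, which records a permutation of $\{1,\ldots,k\}$, giving $k!$ choices. These singletons cut the linear sequence of parts into $k+1$ consecutive, possibly empty, blocks $B_0,\ldots,B_k$, where $B_i$ consists of the parts strictly between the $i$-th and $(i+1)$-st singleton (with $B_0$ preceding the first singleton and $B_k$ following the last). Let $A_i\subseteq\{k+1,\ldots,n+k\}$ be the set of elements appearing in the parts of $B_i$; the parts inside $B_i$ then form an ordered set-partition of $A_i$. Conversely, a permutation of the singletons, a function assigning each of $k+1,\ldots,n+k$ to one of the $k+1$ blocks (equivalently, labeled sets $A_0,\ldots,A_k$ partitioning $\{k+1,\ldots,n+k\}$), and a weak order on each $A_i$, reassemble uniquely into an element of $\mathcal F$ by concatenating: the parts of $A_0$, then $\{\sigma(1)\}$, then the parts of $A_1$, and so on. This reassembly is a two-sided inverse of the decomposition above, so we have a bijection.

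Finally I would count the right-hand data by conditioning on the sizes $n_i=\abs{A_i}$, which range over nonnegative integers with $n_0+\cdots+n_k=n$: there are $k!$ orderings of the singletons, then $\binom{n}{n_0,n_1,\ldots,n_k}=\f{n!}{n_0!\cdots n_k!}$ ways to choose the sets $A_0,\ldots,A_k$, and then $\prod_{i=0}^k f_{n_i}$ ways to put a weak order on each $A_i$ (using the convention $f_0=1$ for an empty block). Multiplying and summing over all such compositions $(n_0,\ldots,n_k)$ gives exactly $k!\sum_{n_0+\cdots+n_k=n}\f{n!}{n_0!\cdots n_k!}f_{n_0}\cdots f_{n_k}$, which therefore equals $\abs{\mathcal F}=f_{n+k,k}$. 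I do not anticipate a serious obstacle: the identity is essentially the statement that inserting $k$ ordered singleton separators into a weak order decomposes it as a shuffle of $k+1$ independent weak orders, and the only points needing care are the empty-block bookkeeping (handled by $f_0=1$) and checking that decomposition and reassembly are mutually inverse. A purely generating-function proof is also available, deducing the claim from $\sum_{m\ge0}f_m x^m/m!=1/(2-e^x)$ together with the companion identity $\sum_{n\ge0}f_{n+k,k}\,x^n/n!=k!/(2-e^x)^{k+1}$, but the bijection above is cleaner and self-contained.
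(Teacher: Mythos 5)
Your proof is correct and follows essentially the same decomposition as the paper's: permute the $k$ forced singletons, distribute the remaining $n$ elements among the $k+1$ gaps, and choose a weak order within each gap. You simply spell out the bijection and its inverse in more detail than the paper does.
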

\begin{proof}
Recall that $f_{n+k,k}$ counts the number of weak orders on $\{1,\ldots,n+k\}$ where each of the elements $1,\ldots,k$ must be placed in a part by itself.
We first choose the permutation of the elements $1,\ldots,k$.
This leaves $k+1$ regions between the elements $1,\ldots,k$ where we must place the remaining $n$ elements $k+1,\ldots,n+k$.
We then choose how many elements $n_i$ should go into each of the $k+1$ regions.
There are $\f{n!}{n_0!\cdots n_k!}$ ways to choose how to distribute the $n$ elements $k+1,\ldots,n+k$ into these $k+1$ regions.
Finally, there are $f_{n_i}$ choices for the weak order on the $n_i$ elements within each of the $k+1$ regions.
\end{proof}
We remark that Proposition \ref{prop:fubiniformula} gives a generating function identity
\[\sum_{n=0}^\infty\frac{f_{n+k,k}}{n!}x^n=k!\pq{\sum_{n=0}^\infty\frac{f_n}{n!}x^n}^{k+1}=\frac{k!}{(2-e^x)^{k+1}}.\]
It is possible to prove Lemma \ref{lem:fubini_growth} below using complex-analytic generating function techniques by looking at the poles of $k!/(2-e^x)^{k+1}$.
However, we will prove Lemma \ref{lem:fubini_growth} using more direct real-analytic techniques.
We will need the following technical lemma.
\begin{lemma}
\label{lem:tech2}
Let $\{a_n\}_{n=0}^\infty$ be a sequence of real numbers and let $k\geq0$ be an integer.
If $a_n\to1$, then
\[\frac{1}{\binom{n+k}{k}}\sum_{\substack{n_0+\cdots+n_k=n\\\text{all }n_i\geq0}}a_{n_0}\cdots a_{n_k}\to1\]
as $n\to\infty$.
\end{lemma}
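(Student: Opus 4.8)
The plan is to exploit the fact that in the normalized sum almost every index tuple $(n_0,\dots,n_k)$ with $n_0+\dots+n_k=n$ has all of its coordinates large, and that on such a tuple the product $a_{n_0}\cdots a_{n_k}$ is forced to be close to $1$; the few remaining tuples are controlled using only boundedness of $\{a_n\}$. This is the standard ``average of a convergent sequence'' argument, adapted to averaging over compositions.

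First I would record that a convergent sequence is bounded, so $|a_n|\le M$ for some $M\ge 1$ and all $n$. Fix $\varepsilon\in(0,1)$ and pick $N$ with $|a_n-1|<\varepsilon$ for all $n>N$. Call a tuple $(n_0,\dots,n_k)$ summing to $n$ \emph{good} if $n_i>N$ for every $i$, and \emph{bad} otherwise. Next I would estimate the number of bad tuples. For $k=0$ the statement is trivial (the sum equals $a_n$ and $\binom{n}{0}=1$), so assume $k\ge 1$. For a fixed coordinate $i$ and a fixed value $j\in\{0,\dots,N\}$, the number of tuples with $n_i=j$ equals the number of ways to write $n-j$ as an ordered sum of $k$ nonnegative integers, namely $\binom{n-j+k-1}{k-1}$. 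Summing over $j\le N$ and over the $k+1$ choices of $i$, the number of bad tuples is at most $(k+1)(N+1)\binom{n+k-1}{k-1}=O(n^{k-1})$. Since the total number of tuples is $\binom{n+k}{k}\sim n^{k}/k!$, the fraction $\delta_n$ of bad tuples tends to $0$ as $n\to\infty$.

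On a good tuple every factor lies in $(1-\varepsilon,1+\varepsilon)\subset(0,2)$, so the product lies in $((1-\varepsilon)^{k+1},(1+\varepsilon)^{k+1})$ and hence $|a_{n_0}\cdots a_{n_k}-1|\le(1+\varepsilon)^{k+1}-1=:\eta(\varepsilon)$, while on any tuple $|a_{n_0}\cdots a_{n_k}|\le M^{k+1}$. Writing $A_n$ for the normalized sum and splitting the sum into good and bad tuples gives
\[\bigl|A_n-1\bigr|\le\eta(\varepsilon)+\bigl(M^{k+1}+1\bigr)\delta_n.\]
Letting $n\to\infty$ yields $\limsup_n\bigl|A_n-1\bigr|\le\eta(\varepsilon)$, and then letting $\varepsilon\to 0$ (so that $\eta(\varepsilon)\to 0$) gives $A_n\to 1$, as desired.

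I do not expect a genuine obstacle here. The only mildly delicate point is the combinatorial count of bad tuples in the second step, where one must track the degree in $n$ carefully and check the degenerate small cases ($k=0$ is trivial; for $k=1$ one has $\binom{n+k-1}{k-1}=1$, so there are only finitely many bad tuples, and the argument still goes through). Everything else is routine.
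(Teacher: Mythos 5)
Your proposal is correct and follows essentially the same approach as the paper's proof: split the tuples according to whether all coordinates exceed a threshold, bound the products on the main part by $(1\pm\varepsilon)^{k+1}$, control the remainder by boundedness of $\{a_n\}$, and finish with a $\limsup$ followed by $\varepsilon\to0$. The only cosmetic difference is that you union-bound the bad tuples by $O(n^{k-1})$ while the paper counts the good tuples exactly as $\binom{n-(k+1)m+k}{k}$ and shows the ratio of binomial coefficients tends to $1$.
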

\begin{proof}
We first remark that $\binom{n+k}{k}$ is the number of terms in the sum, since the number of solutions of $n_0+\cdots+n_k=n$ equals the number of ways to distribute $n$ identical balls into $k+1$ distinguishable urns.
Let $\varepsilon>0$.
Then there exists an $m\geq0$ such that $a_n\in[1-\varepsilon,1+\varepsilon]$ for $n\geq m$.
Let $C=\max\abs{a_n}$.
For $n\geq(k+1)m$, we will split up the sum as
\[\sum_{n_0+\cdots+n_k=n}a_{n_0}\cdots a_{n_k}=\sum_{\substack{n_0+\cdots+n_k=n\\\text{all }n_i\geq m}}a_{n_0}\cdots a_{n_k}+\sum_{\substack{n_0+\cdots+n_k=n\\\text{some }n_i<m}}a_{n_0}\cdots a_{n_k}.\]
The number of terms of the first sum equals the number of solutions to $n_0+\cdots+n_k=n$ with all $n_i\geq m$, which equals the number of solutions to $n_0+\cdots+n_k=n-(k+1)m$.
Then the first sum has $\binom{n-(k+1)m+k}{k}$ terms (by the same argument as at the start of the proof), each of which lies in the interval $[(1-\varepsilon)^{k+1},(1+\varepsilon)^{k+1}]$.
The second sum has $\binom{n+k}{k}-\binom{n-(k+1)m+k}{k}$ terms, each of which lies in the interval $[-C^{k+1},C^{k+1}]$.
Thus,
\[(1-\varepsilon)^{k+1}\binom{n-(k+1)m+k}{k}-C^{k+1}\pq{\binom{n+k}{k}-\binom{n-(k+1)m+k}{k}}\leq\sum_{n_0+\cdots+n_k=n}a_{n_0}\cdots a_{n_k}\]
and
\[\sum_{n_0+\cdots+n_k=n}a_{n_0}\cdots a_{n_k}\leq(1+\varepsilon)^{k+1}\binom{n-(k+1)m+k}{k}+C^{k+1}\pq{\binom{n+k}{k}-\binom{n-(k+1)m+k}{k}}.\]
Dividing through by $\binom{n+k}{k}$ gives the inequalities
\[(1-\varepsilon)^{k+1}\frac{\binom{n-(k+1)m+k}{k}}{\binom{n+k}{k}}-C^{k+1}\pq{1-\frac{\binom{n-(k+1)m+k}{k}}{\binom{n+k}{k}}}\leq\frac{1}{\binom{n+k}{k}}\sum_{n_0+\cdots+n_k=n}a_{n_0}\cdots a_{n_k}\]
and
\[\frac{1}{\binom{n+k}{k}}\sum_{n_0+\cdots+n_k=n}a_{n_0}\cdots a_{n_k}\leq(1+\varepsilon)^{k+1}\frac{\binom{n-(k+1)m+k}{k}}{\binom{n+k}{k}}+C^{k+1}\pq{1-\frac{\binom{n-(k+1)m+k}{k}}{\binom{n+k}{k}}}.\]
Note that
\[\frac{\binom{n-(k+1)m+k}{k}}{\binom{n+k}{k}}=\frac{(n-(k+1)m+1)\cdots(n-(k+1)m+k)}{(n+1)\cdots(n+k)}\to1\text{ as }n\to\infty.\]
Then taking $n\to\infty$ gives
\begin{align*}(1-\varepsilon)^{k+1}\leq\liminf_{n\to\infty}\frac{1}{\binom{n+k}{k}}\sum_{n_0+\cdots+n_k=n}a_{n_0}\cdots a_{n_k}\leq\limsup_{n\to\infty}\frac{1}{\binom{n+k}{k}}\sum_{n_0+\cdots+n_k=n}a_{n_0}\cdots a_{n_k}\leq(1+\varepsilon)^{k+1}.\end{align*}
The result follows from taking $\varepsilon\to0$.
\end{proof}
We can now give an asymptotic formula for $f_{n,k}$ as $n\to\infty$.
Recall that $f(n)\sim g(n)$ means $\lim\limits_{n\to\infty}\frac{f(n)}{g(n)}=1$.
\begin{lemma}
\label{lem:fubini_growth}
For each fixed nonnegative integer $k$, we have \[f_{n,k}\sim\frac{n!}{2^{k+1}(\log2)^{n+1}}\]
as $n\to\infty$.
\end{lemma}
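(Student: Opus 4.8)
\ The plan is to first reduce to the case $k=0$ and then establish that case directly by real-analytic means. For the reduction I would normalize the ordinary Fubini numbers by setting $a_n=\frac{2(\log 2)^{n+1}f_n}{n!}$, so that the $k=0$ instance of the lemma is exactly the assertion $a_n\to1$. Assuming this, I would substitute $f_{n_i}=a_{n_i}\,\frac{n_i!}{2(\log 2)^{n_i+1}}$ into the identity of Proposition \ref{prop:fubiniformula}; since $(n_0+1)+\cdots+(n_k+1)=n+k+1$ whenever $n_0+\cdots+n_k=n$, the factorials cancel and the powers of $\log 2$ and of $2$ collapse, leaving
\[f_{n+k,k}=\frac{k!\,n!}{2^{k+1}(\log 2)^{n+k+1}}\sum_{n_0+\cdots+n_k=n}a_{n_0}\cdots a_{n_k}.\]
Applying Lemma \ref{lem:tech2} to the sequence $\{a_n\}$ gives $\sum_{n_0+\cdots+n_k=n}a_{n_0}\cdots a_{n_k}\sim\binom{n+k}{k}=\frac{(n+k)!}{k!\,n!}$, hence $f_{n+k,k}\sim\frac{(n+k)!}{2^{k+1}(\log 2)^{n+k+1}}$, and substituting $m=n+k$ (with $k$ fixed, $m\to\infty$) yields the stated formula for $f_{m,k}$.

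It then remains to prove the base case $a_n\to1$, i.e.\ $f_n\sim\frac{n!}{2(\log 2)^{n+1}}$, without invoking complex analysis. I would start from the standard series representation $f_n=\sum_{j\ge1}\frac{j^n}{2^{j+1}}$ (obtained by reading off the Taylor coefficients at $x=0$ of $\frac{1}{2-e^x}=\frac12\sum_{j\ge0}(e^x/2)^j$) and compare $\sum_{j\ge1}j^n2^{-j}$ to the Gamma integral
\[\int_0^\infty t^n2^{-t}\,dt=\int_0^\infty t^ne^{-t\log 2}\,dt=\frac{n!}{(\log 2)^{n+1}}.\]
The integrand $\phi(t)=t^n2^{-t}$ is unimodal, increasing on $[0,n/\log 2]$ and decreasing afterward, with $\phi(0)=0$ and $\phi(t)\to0$, so splitting $[0,\infty)$ into unit intervals and using the monotone comparison of a sum with an integral on each piece gives $\bigl|\sum_{j\ge1}\phi(j)-\int_0^\infty\phi(t)\,dt\bigr|=O\bigl(\max_t\phi(t)\bigr)$. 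The maximum is $\phi(n/\log 2)=\frac{n^ne^{-n}}{(\log 2)^n}$, which by Stirling's formula is $O\bigl(n^{-1/2}\cdot\frac{n!}{(\log 2)^{n+1}}\bigr)$ and hence negligible compared to the integral. Therefore $\sum_{j\ge1}j^n2^{-j}\sim\frac{n!}{(\log 2)^{n+1}}$, and dividing by $2$ gives the base case.

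I expect the base case to be the only real obstacle: the first paragraph is essentially bookkeeping once Proposition \ref{prop:fubiniformula} and Lemma \ref{lem:tech2} are available, whereas the $k=0$ asymptotic requires a genuine estimate. The delicate point is making the sum-versus-integral comparison quantitative enough to see that the discrepancy is $o\bigl(n!/(\log 2)^{n+1}\bigr)$; this works because the bump of $\phi$ has width of order $\sqrt{n}$ while $\phi$ varies slowly near its peak, which is precisely what comparing the peak value to $n!$ via Stirling detects. An alternative would be to extract coefficient asymptotics from the pole of $\frac{1}{2-e^x}$ at $x=\log 2$, but the argument above keeps everything elementary and real.
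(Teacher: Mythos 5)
Your reduction to the case $k=0$ is exactly the paper's argument: the paper rewrites the identity of Proposition \ref{prop:fubiniformula} in precisely the normalized form you describe and then applies Lemma \ref{lem:tech2} to the sequence $a_n=2(\log2)^{n+1}f_n/n!$. Where you genuinely depart from the paper is the base case. The paper simply cites the asymptotic $f_n\sim n!/(2(\log2)^{n+1})$ from Wilf's \emph{generatingfunctionology}, whereas you prove it from the explicit series $f_n=\sum_{j\ge1}j^n/2^{j+1}$ by comparison with the Gamma integral $\int_0^\infty t^n2^{-t}\,dt=n!/(\log2)^{n+1}$. That estimate is sound: for a nonnegative unimodal $\phi$ the sum-versus-integral discrepancy really is $O(\max_t\phi(t))$ (the monotone unit intervals telescope, and the one interval containing the mode contributes at most $2\max_t\phi(t)$), and Stirling shows the peak value $n^ne^{-n}/(\log2)^n$ is $O(n^{-1/2})$ times the integral, so the relative error vanishes. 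The trade-off is clear: your version is self-contained and stays entirely real-analytic, at the cost of an extra Laplace-type computation that the paper outsources to a standard reference.
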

\begin{proof}
Proposition \ref{prop:fubiniformula} gives the combinatorial identity
\[f_{n+k,k}=k!\sum_{n_0+\cdots+n_k=n}\frac{n!}{n_0!\cdots n_k!}f_{n_0}\cdots f_{n_k}\]
which we can rewrite as
\[\frac{f_{n+k,k}}{\displaystyle\pq{\frac{(n+k)!}{2^{k+1}(\log2)^{n+k+1}}}}=\frac{1}{\displaystyle\binom{n+k}{k}}\sum_{n_0+\cdots+n_k=n}\frac{f_{n_0}}{\displaystyle\pq{\frac{n_0!}{2(\log2)^{n_0+1}}}}\cdots\frac{f_{n_k}}{\displaystyle\pq{\frac{n_k!}{2(\log2)^{n_k+1}}}}.\]
Then the result follows from Lemma \ref{lem:tech2}, along with the asymptotic formula $f_n\,{\sim}\,\frac{n!}{2(\log2)^{n+1}}$ \cite[p.175-176]{wilf}.
\end{proof}
\subsection{Asymptotics of \texorpdfstring{$q_n$}{Lg}}
We will need a short technical lemma.
\begin{lemma}
\label{lem:tech1}
For all integers $0\leq2c\leq t\leq n$,
\[\binom{n}{t}\pq{\frac{(n-t+c)!}{n!}}^2\leq\frac{1}{t!}.\]
\end{lemma}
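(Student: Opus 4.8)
The plan is to clear all denominators and reduce the statement to an elementary comparison of products of consecutive integers. First I would use $\binom{n}{t}=\frac{n!}{t!\,(n-t)!}$, so that multiplying the desired inequality $\binom{n}{t}\left(\frac{(n-t+c)!}{n!}\right)^2\leq\frac{1}{t!}$ through by the positive quantity $t!\,(n!)^2$ turns it into
\[
\bigl((n-t+c)!\bigr)^2\leq(n-t)!\,n!.
\]
Setting $a=n-t\geq0$ and using $n=a+t$, this is exactly $\bigl((a+c)!\bigr)^2\leq a!\,(a+t)!$, and the hypotheses $0\leq 2c\leq t\leq n$ become simply $a\geq0$ and $t\geq 2c\geq 0$.

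Next I would divide both sides by $a!\,(a+c)!$ to rewrite the claim as $\frac{(a+c)!}{a!}\leq\frac{(a+t)!}{(a+c)!}$, that is,
\[
\prod_{i=1}^{c}(a+i)\leq\prod_{i=c+1}^{t}(a+i),
\]
with the usual convention that an empty product equals $1$, which handles the case $c=0$. The left-hand product has $c$ factors and, after reindexing, the right-hand product is $\prod_{i=1}^{t-c}(a+c+i)$ with $t-c\geq c$ factors. Matching the $i$-th factor on each side for $1\leq i\leq c$ gives $a+i\leq a+c+i$, and the remaining $t-2c\geq0$ factors on the right are each at least $a+2c+1\geq1$. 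Multiplying these comparisons together gives the inequality, and unwinding the reductions above yields the claim.

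I do not expect a real obstacle here: the argument is a short chain of equivalent reformulations followed by a factor-by-factor bound. The only points requiring a little care are the boundary conventions — empty products when $c=0$, and the degenerate cases $t=2c$ (no leftover factors) or $t=n$ (so $a=0$, reducing to $(c!)^2\leq(2c)!\leq t!$) — together with checking that the factor counts on the two sides line up exactly as claimed, which is precisely where the hypothesis $t\geq 2c$ is used.
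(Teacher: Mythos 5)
Your proof is correct and is essentially the paper's argument: both reduce the claim to $\bigl((n-t+c)!\bigr)^2\leq(n-t)!\,n!$ and then verify this by a factor-by-factor comparison of products of consecutive integers, using $t\geq 2c$ in the same way (the paper inserts $n!\geq(n-t+2c)!$ first and then bounds a ratio of $c$ factors by $1$, which is your two steps in the opposite order). No issues.
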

\begin{proof}
We have
\[\binom{n}{t}\pq{\frac{(n-t+c)!}{n!}}^2=\frac{1}{t!}\frac{n!}{(n-t)!}\pq{\frac{(n-t+c)!}{n!}}^2=\frac{1}{t!}\frac{(n-t+c)!^2}{(n-t)!\,n!},\]
where
\[\frac{(n-t+c)!^2}{(n-t)!\,n!}\leq\frac{(n-t+c)!}{(n-t)!}\frac{(n-t+c)!}{(n-t+2c)!}=\frac{n-t+1}{n-t+c+1}\cdots\frac{n-t+c}{n-t+2c}\leq1.\qedhere\]
\end{proof}
\begin{theorem}
\label{thm:qn_growth}
We have
\[q_n\sim e^{-(\log2)^2}q_{n,0}\sim e^{-(\log2)^2}\frac{n!^2}{4\,(\log2)^{2n+2}}.\]
Thus, approximately $e^{-(\log2)^2}$ \emph{(}around $62\%$\emph{)} of pairs of weak orders have no consecutive embeddings.
\end{theorem}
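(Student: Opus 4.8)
The plan is to work directly from formula (\ref{eq:qn}) for $q_n$ and extract the dominant term as $n\to\infty$. Write $q_n=\sum_{c\ge0}\sum_{t\ge2c}\binom{n}{t}h_{t,c}\,g_{n-t+c,c}$ where $g_{m,c}=\sum_{j=0}^c f_{m,j}f_{m,c-j}/(j!\,(c-j)!)$. First I would normalize each summand by dividing by $q_{n,0}=f_n^2\sim n!^2/(4(\log2)^{2n+2})$, and identify which pairs $(c,t)$ contribute to the limit. Using Lemma \ref{lem:fubini_growth}, $f_{m,j}\sim m!/(2^{j+1}(\log2)^{m+1})$, so $g_{m,c}\sim \big(\sum_{j=0}^c \tfrac{1}{j!(c-j)!2^{j+1}2^{c-j+1}}\big)\cdot m!^2/(\log2)^{2m+2}=\tfrac{2^c}{4\cdot c!\,2^{c+2}}\cdot\ldots$ — more cleanly, $g_{m,c}\sim \tfrac{1}{4\,c!}\cdot m!^2/(\log2)^{2m+2}$ after summing the binomial coefficients via $\sum_j\binom{c}{j}2^{-j}2^{-(c-j)}=2^{-c}\cdot 2^c\cdot$(wait, it is $\sum_j\binom{c}{j}=2^c$ times $2^{-c-2}$, giving $1/4$); in any case the $c$-dependence is $1/(4\,c!)$ times the main growth factor. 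With $m=n-t+c$, the ratio $g_{n-t+c,c}/q_{n,0}$ behaves like $\tfrac{1}{4c!}\big((n-t+c)!/n!\big)^2(\log2)^{2t-2c}\cdot 4$, so each summand's normalized size is controlled by $\binom{n}{t}\big((n-t+c)!/n!\big)^2|h_{t,c}|(\log2)^{2t-2c}/c!$ up to constants.

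The next step is to show that all terms with $t>2c$ are asymptotically negligible and only $t=2c$ survives. By Lemma \ref{lem:tech1}, $\binom{n}{t}\big((n-t+c)!/n!\big)^2\le 1/t!$, and by Corollary \ref{cor:htc_estimate}, $|h_{t,c}|\le 2^c c^t$. So the tail is bounded by a convergent double series $\sum_{c}\sum_{t\ge 2c} \tfrac{2^c c^t(\log2)^{2t-2c}}{t!\,c!}\cdot(\text{const})$, and a dominated-convergence argument over the discrete index set shows the limit is obtained termwise. For each fixed $c$, Proposition \ref{prop:htc_combo} gives $h_{2c,c}=(-1)^c2^c$ exactly (the only ordered set-partition of a $2c$-set into $c$ parts of even cardinality has all parts of size $2$... no: $(-1)^c h_{2c,c}/2^c$ counts ordered set-partitions of $\{1,\dots,2c\}$ into $c$ parts of even cardinality, and since the total is $2c$ and each part has size $\ge2$, every part has size exactly $2$; there are $(2c)!/2^c$ such, so $h_{2c,c}=(-1)^c(2c)!$). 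Then the $t=2c$ term contributes, in the limit, $(-1)^c(2c)!\cdot\tfrac{1}{(2c)!}\cdot\tfrac{1}{c!}\cdot(\log2)^{2c}\cdot(\text{the matched main-term constants})=\tfrac{(-1)^c(\log2)^{2c}}{c!}$ after the normalization by $q_{n,0}$ — I will need to track the constant carefully, but the sum over $c$ then telescopes to $\sum_c (-1)^c(\log2)^{2c}/c! = e^{-(\log2)^2}$.

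Concretely the steps are: (1) divide (\ref{eq:qn}) by $q_{n,0}$ and write each summand's limit using Lemma \ref{lem:fubini_growth} for $g_{n-t+c,c}$ together with $f_n^2\sim n!^2/(4(\log2)^{2n+2})$; (2) establish the uniform bound via Lemma \ref{lem:tech1} and Corollary \ref{cor:htc_estimate} so the series of limits dominates and a discrete dominated-convergence argument applies; (3) compute the termwise limit, noting $h_{t,c}=0$ for $t$ odd and that for $t=2c$ one has $h_{2c,c}=(-1)^c(2c)!$, while $t>2c$ terms vanish in the limit because of the extra $(\log2)^{2(t-2c)}$ decay combined with the $1/t!$ factor beating $c^t$ — actually they don't vanish individually, they just sum over $t$ for fixed $c$; I must sum the full $t$-series for each $c$ and check it still collapses to $(-1)^c(\log2)^{2c}/c!$, which it should since for fixed $c$ as $n\to\infty$ the factor $\big((n-t+c)!/n!\big)^2\binom{n}{t}\sim 1/t!$ is asymptotically sharp only when $t=o(\sqrt n)$, and the contribution is $\tfrac1{c!}\sum_{t\ge 2c}\tfrac{h_{t,c}(\log2)^{2t-2c}}{t!}\cdot(\pm)$, which by Proposition \ref{prop:htc_combo} equals $\tfrac{(-1)^c}{c!}\sum_{c'\ge c}\tfrac{2^c\,\#\{\text{partitions}\}(\log2)^{2c'-2c}}{(2c')!}$... this inner computation is the delicate bookkeeping; (4) conclude $\lim q_n/q_{n,0}=\sum_{c\ge0}(-1)^c(\log2)^{2c}/c!=e^{-(\log2)^2}$, and combine with $q_{n,0}=f_n^2\sim n!^2/(4(\log2)^{2n+2})$ to get the stated formula. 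The main obstacle I anticipate is step (3): correctly resumming the $t$-sum for each fixed $c$ so that the overcounting from $t>2c$ is accounted for and the whole thing genuinely collapses to $e^{-(\log2)^2}$ rather than some other series — this requires using the combinatorial identity for $h_{t,c}$ (ordered set-partitions into even parts) to recognize a known exponential generating function, namely that $\sum_{t}(-1)^c h_{t,c}(\log2)^t/t! = 2^c(\cosh(\log2)-1-(\tfrac{(\log2)^2}{2})\ldots)$-type expressions, i.e. the EGF of even-part ordered set-partitions into $c$ blocks is $\big(\cosh x - 1\big)^c$ evaluated appropriately, and $\sum_c$ of these with the right sign gives $e^{\,x^2/2}$ corrections to the naive $e^{-x^2}$; reconciling signs and the $e^{-(\log2)^2/2}$ versus $e^{-(\log2)^2}$ discrepancy between this theorem and Theorem \ref{thm:pnthm} (the extra $\sqrt{}$ factor comes later from the Stirling-number transform) is exactly where care is needed.
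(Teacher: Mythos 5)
Your approach is the same as the paper's: normalize equation (\ref{eq:qn}) by $q_{n,0}=f_n^2$, take termwise limits of the $(c,t,j)$-summands via Lemma \ref{lem:fubini_growth}, justify the interchange of limit and sum by dominated convergence using the bounds of Lemma \ref{lem:tech1} and Corollary \ref{cor:htc_estimate}, and evaluate the surviving $t=2c$ terms with $h_{2c,c}=(-1)^c(2c)!$ to arrive at $\sum_{c\geq0}(-1)^c(\log2)^{2c}/c!=e^{-(\log2)^2}$. All of that is correct, including your computation of $h_{2c,c}$ and of the constant $1/(4c!)$ in the asymptotics of $g_{m,c}$.

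The one genuine misstep is the retraction in your step (3): the terms with $t>2c$ \emph{do} vanish individually, and your original claim was right. For fixed $c$, $t$, $j$, the normalized summand is asymptotic to $\frac{h_{t,c}}{j!\,(c-j)!}\frac{(\log2)^{2t-2c}}{2^c\,t!}\,n^{2c-t}$, because $\binom{n}{t}\bigl((n-t+c)!/n!\bigr)^2=\frac{1}{t!}\cdot\frac{(n-t+c)!^2}{(n-t)!\,n!}\sim\frac{1}{t!}\,n^{2c-t}$ for fixed $t,c$. So each summand with $t>2c$ tends to $0$, and only $t=2c$ contributes a nonzero limit. Dominated convergence --- with the $n$-independent majorant $C\abs{h_{t,c}}/(j!\,(c-j)!\,t!)$, whose triple sum converges by Corollary \ref{cor:htc_estimate} --- then gives that the limit of the sum equals the sum of the pointwise limits, which discards every $t>2c$ term. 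No resummation of the $t$-series for fixed $c$, and no $(\cosh x-1)^c$ generating-function identity, is needed; that detour would be chasing contributions that the pointwise limit has already eliminated. Likewise there is no sign or constant discrepancy to reconcile with Theorem \ref{thm:pnthm}: the passage from $e^{-(\log2)^2}$ to $e^{-(\log2)^2/2}$ happens later, in Theorem \ref{thm:pn_growth}, via the Stirling transform $p_n=\frac{1}{n!}\sum_k\Sone{n}{k}q_k$, exactly as you guessed.
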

\begin{proof}
Taking the formula $q_n=\sum_c\sum_t\binom{n}{t}h_{t,c}g_{n-t+c,c}$ in equation (\ref{eq:qn}), unfolding the definition of $g_{n-t+c,c}$, and dividing through by $f_n^2$ gives
\[\frac{q_n}{f_n^2}=\sum_{c=0}^{\lfloor n/2\rfloor}\sum_{t=2c}^n\sum_{j=0}^c\binom{n}{t}\frac{h_{t,c}}{j!\,(c-j)!}\frac{f_{n-t+c,j}f_{n-t+c,c-j}}{f_n^2}.\]
Now fix $c\geq0$, $t\geq2c$, and $0\leq j\leq c$, and consider the summand
\[\binom{n}{t}\frac{h_{t,c}}{j!\,(c-j)!}\frac{f_{n-t+c,j}f_{n-t+c,c-j}}{f_n^2}\]
for $n\geq t$ as $n\to\infty$.
By the asymptotic formula for $f_{n,k}$ in Lemma \ref{lem:fubini_growth} and the fact that $f_n=f_{n,0}$ we have
\begin{align*}
\binom{n}{t}\frac{h_{t,c}}{j!\,(c-j)!}\frac{f_{n-t+c,j}f_{n-t+c,c-j}}{f_n^2}&\sim\frac{n^t}{t!}\frac{h_{t,c}}{j!\,(c-j)!}\frac{\displaystyle\pq{\frac{(n-t+c)!}{2^{j+1}(\log2)^{n-t+c+1}}}\pq{\frac{(n-t+c)!}{2^{c-j+1}(\log2)^{n-t+c+1}}}}{\displaystyle\pq{\frac{n!}{2(\log2)^{n+1}}}^2}\\
&\sim\frac{n^t}{t!}\frac{h_{t,c}}{j!\,(c-j)!}\frac{(\log2)^{2t-2c}}{2^cn^{2t-2c}}\sim\frac{h_{t,c}}{j!\,(c-j)!}\frac{(\log2)^{2t-2c}}{2^ct!}\frac{1}{n^{t-2c}}.
\end{align*}
If $t>2c$, then the summand converges to 0.
If $t=2c$, then $h_{t,c}=(-1)^ct!$ (this follows from the definition of $h_{t,c}$, as well as from Proposition \ref{prop:hcomb}) so the summand converges to
\[(-1)^c\frac{1}{j!\,(c-j)!}\frac{(\log2)^{2c}}{2^c}.\]
After applying dominated convergence theorem (justified below), the $t>2c$ terms contribute nothing so we can drop the sum over $t$ and get
\[\frac{q_n}{f_n^2}\to\sum_{c=0}^\infty\sum_{j=0}^c(-1)^c\frac{1}{j!\,(c-j)!}\frac{(\log2)^{2c}}{2^c}=\sum_{c=0}^\infty(-1)^c\frac{(\log2)^{2c}}{c!}=e^{-(\log2)^2}.\]
The result follows from the identity $q_{n,0}=f_n^2$ and the asymptotic formula $f_n\sim\frac{n!}{2(\log2)^{n+1}}$ (Lemma \ref{lem:fubini_growth}).

It remains to justify this application of the dominated convergence theorem.
Note that the asymptotic formula $f_n\sim\frac{n!}{2(\log2)^{n+1}}$ gives a constant $C$ (not depending on $c$, $t$, $j$) such that we have
\begin{align*}
    \left\lvert\binom{n}{t}\frac{h_{t,c}}{j!\,(c-j)!}\frac{f_{n-t+c,j}f_{n-t+c,c-j}}{f_n^2}\right\rvert&=\frac{\abs{h_{t,c}}}{j!\,(c-j)!}\binom{n}{t}\frac{f_{n-t+c,j}f_{n-t+c,c-j}}{f_n^2}\\
    &\leq\frac{\abs{h_{t,c}}}{j!\,(c-j)!}\binom{n}{t}\pq{\frac{f_{n-t+c}}{f_n}}^2\\
    &\leq C\frac{\abs{h_{t,c}}}{j!\,(c-j)!}\binom{n}{t}\pq{\frac{\displaystyle\frac{(n-t+c)!}{(\log2)^{n-t+c}}}{\displaystyle\frac{n!}{(\log2)^n}}}^2\\
    &\leq C\frac{\abs{h_{t,c}}}{j!\,(c-j)!}\binom{n}{t}\pq{\frac{(n-t+c)!}{n!}}^2\\
    &\leq C\frac{\abs{h_{t,c}}}{j!\,(c-j)!}\frac{1}{t!},
\end{align*}
where the last inequality uses Lemma \ref{lem:tech1}.
It remains to show that the sum
\[\sum_{c=0}^\infty\sum_{t=2c}^\infty\sum_{j=0}^c\frac{\abs{h_{t,c}}}{j!\,(c-j)!}\frac{1}{t!}\]
is finite.
Applying Corollary \ref{cor:htc_estimate} and the inequality $2c\leq t$ from the bounds of summation gives
\begin{align*}
    \sum_{c=0}^\infty\sum_{t=2c}^\infty\sum_{j=0}^c\frac{\abs{h_{t,c}}}{j!\,(c-j)!}\frac{1}{t!}&=\sum_{c=0}^\infty\sum_{t=2c}^\infty\frac{2^c}{c!\,t!}\abs{h_{t,c}}\leq\sum_{c=0}^\infty\sum_{t=2c}^\infty\frac{2^{2c}c^t}{c!\,t!}\leq\sum_{c=0}^\infty\sum_{t=2c}^\infty\frac{(2c)^t}{c!\,t!}\leq\sum_{c=0}^\infty\sum_{t=0}^\infty\frac{(2c)^t}{c!\,t!}=\sum_{c=0}^\infty\frac{e^{2c}}{c!}
\end{align*}
which is finite by the ratio test.
\end{proof}
\subsection{Asymptotics of \texorpdfstring{$p_n$}{Lg}}
We will need the following technical lemma.
\begin{lemma}
\label{lem:Sone_growth}
For each fixed nonnegative integer $k$, we have
\[\Sone{n}{n-k}\sim\frac{n^{2k}}{2^kk!}\]
as $n\to\infty$.
We also have
\[\Sone{n}{n-k}\leq\frac{n^{2k}}{2^kk!}\]
for all $n\geq k$.
\end{lemma}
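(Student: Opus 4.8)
The plan is to sandwich $\Sone{n}{n-k}$ between two quantities that each have the claimed leading term $\tfrac{n^{2k}}{2^kk!}$, proving the upper bound first (by induction) and then using it together with a lower bound to get the asymptotic. I would prove the displayed inequality before the asymptotic equivalence, since the former feeds into the squeeze argument for the latter.

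\textbf{Upper bound.} I would prove $\Sone{n}{n-k}\le\frac{n^{2k}}{2^kk!}$ for all $0\le k\le n$ by induction on $n$, using the standard recurrence $\Sone{n}{m}=\Sone{n-1}{m-1}+(n-1)\Sone{n-1}{m}$. Setting $m=n-k$ rewrites this as $\Sone{n}{n-k}=\Sone{n-1}{(n-1)-k}+(n-1)\Sone{n-1}{(n-1)-(k-1)}$, so I induct on $n$ with $k$ ranging freely, which is exactly what makes the inductive hypothesis available at both $(n-1,k)$ and $(n-1,k-1)$. The cases $k=0$ (where $\Sone{n}{n}=1$) and $k=n$ (where $\Sone{n}{0}=0$ for $n\ge1$) are immediate; for $1\le k\le n-1$ the inductive hypothesis bounds the right-hand side by $\frac{(n-1)^{2k}}{2^kk!}+\frac{(n-1)^{2k-1}}{2^{k-1}(k-1)!}=\frac{(n-1)^{2k-1}(n+2k-1)}{2^kk!}$, so it suffices to verify the elementary inequality $(n-1)^{2k-1}(n+2k-1)\le n^{2k}$. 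Dividing by $n^{2k}$, this becomes $(1-\tfrac1n)^{2k-1}\bigl(1+\tfrac{2k-1}{n}\bigr)\le1$, which follows because $g(x)=(1-x)^{2k-1}\bigl(1+(2k-1)x\bigr)$ satisfies $g(0)=1$ and $g'(x)=-(2k-1)(2k)\,x\,(1-x)^{2k-2}\le0$ on $[0,1]$; evaluate at $x=\tfrac1n$.

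\textbf{Lower bound and squeeze.} For the matching lower bound I would observe that any permutation of $\{1,\ldots,n\}$ that is a product of $k$ disjoint transpositions with the remaining $n-2k$ points fixed has exactly $n-k$ cycles, and there are $\binom{n}{2k}(2k-1)!!=\frac{n!}{(n-2k)!\,2^kk!}$ such permutations (choose the $2k$ moved elements, then pair them up). Hence $\Sone{n}{n-k}\ge\frac{n(n-1)\cdots(n-2k+1)}{2^kk!}$ for all $n$. For fixed $k$ the quotient $\frac{n(n-1)\cdots(n-2k+1)}{n^{2k}}=\prod_{i=0}^{2k-1}\bigl(1-\tfrac in\bigr)$ tends to $1$, so both the lower bound and the upper bound $\frac{n^{2k}}{2^kk!}$ are asymptotic to $\frac{n^{2k}}{2^kk!}$; the squeeze theorem then gives $\Sone{n}{n-k}\sim\frac{n^{2k}}{2^kk!}$.

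The only step requiring any care is organizing the induction so the hypothesis applies at the two needed index pairs (handled by inducting on $n$ rather than fixing $k$) and checking that $g$ is decreasing on $[0,1]$; I do not expect a genuine obstacle. One could alternatively expand $\Sone{n}{n-k}=\sum_{m=0}^{2k}\binom{n}{m}d_{m,k}$, where $d_{m,k}$ counts fixed-point-free permutations of $[m]$ with $m-k$ cycles, read off the leading term from $m=2k$ using $d_{2k,k}=(2k-1)!!$, and derive the asymptotic directly; but the squeeze argument is shorter and reuses the upper bound already established.
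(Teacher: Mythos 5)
Your proof is correct. For the inequality, your argument is essentially the paper's: the same recurrence $\Sone{n}{n-k}=(n-1)\Sone{n-1}{(n-1)-(k-1)}+\Sone{n-1}{(n-1)-k}$, the same induction on $n$ with $k$ free, and the same intermediate bound $\tfrac{1}{2^kk!}(n+2k-1)(n-1)^{2k-1}$; the only difference is cosmetic, namely that you finish by showing $g(x)=(1-x)^{2k-1}\bigl(1+(2k-1)x\bigr)$ is decreasing on $[0,1]$ via its derivative, whereas the paper applies AM--GM to the $2k$ factors to get $(n+2k-1)(n-1)^{2k-1}\le n^{2k}$ directly. For the asymptotic, however, your route genuinely differs: the paper simply cites equation 1.6 of Moser and Wyman, while you give a self-contained squeeze, using the combinatorial lower bound $\Sone{n}{n-k}\ge\binom{n}{2k}(2k-1)!!=\tfrac{n(n-1)\cdots(n-2k+1)}{2^kk!}$ (counting products of $k$ disjoint transpositions, which have exactly $n-k$ cycles) together with the upper bound you already established. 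This makes the lemma independent of the external reference at the cost of a short extra counting argument, and it is a nice illustration that the leading term of $\Sone{n}{n-k}$ comes entirely from permutations whose non-fixed points form $k$ transpositions. Both halves of your argument check out; in particular the inductive hypothesis is indeed available at both index pairs $(n-1,k)$ and $(n-1,k-1)$ because you induct on $n$ with all $k$ simultaneously.
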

\begin{proof}
The first statement follows from equation 1.6 of \cite{moser}.
For the second statement, we will use the recurrence for the Stirling numbers of the first kind.
If $k=0$ or $k=n$, then the inequality is clear.
Now suppose that $1\leq k\leq n-1$, and inductively assume that the inequality holds for smaller values of $n$.
Then
\begin{align*}
    \Sone{n}{n-k}&=(n-1)\Sone{n-1}{n-k}+\Sone{n-1}{n-k-1}\\
    &=(n-1)\Sone{n-1}{(n-1)-(k-1)}+\Sone{n-1}{(n-1)-k}\\
    &\leq(n-1)\frac{(n-1)^{2(k-1)}}{2^{k-1}(k-1)!}+\frac{(n-1)^{2k}}{2^kk!}\\
    &=\frac{1}{2^kk!}(2k+n-1)(n-1)^{2k-1}\\
    &\leq\frac{1}{2^kk!}\pq{\frac{(2k+n-1)+(2k-1)(n-1)}{2k}}^{2k}\\
    &=\frac{n^{2k}}{2^kk!},
\end{align*}
where the last inequality (the penultimate step) uses the AM-GM inequality.
\end{proof}
We can now prove the asymptotic formula for $p_n$ (Theorem \ref{thm:pnthm} from the introduction).
\begin{theorem}
\label{thm:pn_growth}
We have
\[p_n\sim e^{-(\log2)^2/2}\frac{f_n^2}{n!}\sim\frac{e^{-(\log2)^2/2}}{4(\log2)^2}\frac{n!}{(\log2)^{2n}}.\]
\end{theorem}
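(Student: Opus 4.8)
I would start from the identity $p_n=\frac{1}{n!}\sum_{k=0}^n\Sone{n}{k}q_k$ derived earlier, which after the substitution $k=n-j$ reads $p_n=\frac{1}{n!}\sum_{j=0}^n\Sone{n}{n-j}q_{n-j}$. Combining the asymptotic $\Sone{n}{n-j}\sim\frac{n^{2j}}{2^jj!}$ from Lemma \ref{lem:Sone_growth}, the asymptotic $q_{n-j}\sim e^{-(\log2)^2}\frac{(n-j)!^2}{4(\log2)^{2(n-j)+2}}$ from Theorem \ref{thm:qn_growth} (using $q_{m,0}=f_m^2\sim\frac{m!^2}{4(\log2)^{2m+2}}$), and the elementary relation $(n-j)!^2\sim n!^2/n^{2j}$ for fixed $j$, one finds that for each fixed $j$,
\[\frac{1}{n!}\Sone{n}{n-j}q_{n-j}\sim e^{-(\log2)^2}\,\frac{((\log2)^2/2)^j}{j!}\cdot\frac{f_n^2}{n!}.\]
Summing over $j=0,\dots,J$ and then letting $J\to\infty$ should recover the series $e^{-(\log2)^2}\sum_{j\ge0}\frac{((\log2)^2/2)^j}{j!}=e^{-(\log2)^2}e^{(\log2)^2/2}=e^{-(\log2)^2/2}$, which is the target constant. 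Converting $\frac{f_n^2}{n!}$ via $f_n\sim\frac{n!}{2(\log2)^{n+1}}$ (Lemma \ref{lem:fubini_growth}) into $\frac{n!}{4(\log2)^{2n+2}}=\frac{1}{4(\log2)^2}\cdot\frac{n!}{(\log2)^{2n}}$ then yields the second stated asymptotic.

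To make this rigorous I would run a $\liminf$/$\limsup$ squeeze rather than a single dominated-convergence argument over $j$, because a quick computation shows that $\sup_n\Sone{n}{n-j}q_{n-j}/f_n^2$ is not summable in $j$. Fixing $J$, write $p_n=\frac{1}{n!}\big(\sum_{j\le J}+\sum_{j>J}\big)\Sone{n}{n-j}q_{n-j}$. The finite part is handled by the termwise asymptotics above, contributing $e^{-(\log2)^2}\big(\sum_{j=0}^J\frac{((\log2)^2/2)^j}{j!}\big)\frac{f_n^2}{n!}(1+o(1))$; since the dropped tail is nonnegative this already gives $\liminf_n\frac{n!\,p_n}{f_n^2}\ge e^{-(\log2)^2}\sum_{j=0}^J\frac{((\log2)^2/2)^j}{j!}$, hence $\liminf\ge e^{-(\log2)^2/2}$ after $J\to\infty$. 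For the matching upper bound I would bound the tail using the explicit inequalities $\Sone{n}{n-j}\le\frac{n^{2j}}{2^jj!}$ (Lemma \ref{lem:Sone_growth}), $q_{n-j}\le q_{n-j,0}=f_{n-j}^2$ (dropping the no-consecutive-embedding restriction), together with $f_m\le C\,m!/(\log2)^{m+1}$ for all $m$ and $f_n\ge c\,n!/(\log2)^{n+1}$ for large $n$ (both from Lemma \ref{lem:fubini_growth}), which reduces $\frac{n!\,p_n}{f_n^2}$ restricted to $j>J$ to a constant times $\sum_{j>J}\frac{(\log2)^{2j}}{2^jj!}\cdot\frac{n^{2j}(n-j)!^2}{n!^2}$.

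For that last sum I would split the range once more at $j=\lfloor n/2\rfloor$. When $J<j\le n/2$ one has $\frac{n^{2j}(n-j)!^2}{n!^2}=\prod_{i=0}^{j-1}\big(\frac{n}{n-i}\big)^2\le4^j$, so that portion is at most $\sum_{j>J}\frac{(2(\log2)^2)^j}{j!}$ uniformly in $n$, which tends to $0$ as $J\to\infty$. When $n/2<j\le n$ I would use crude Stirling-type bounds (namely $m!\ge(m/e)^m$ and $(n-j)!\le(n-j)^{n-j}$) to see each term is $O\big((\log2)^ne^{5n/2}n^{-n/2}\big)$, so this portion is $O\big(n\,(\log2)^ne^{5n/2}n^{-n/2}\big)\to0$ as $n\to\infty$. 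Together these give $\limsup_n\frac{n!\,p_n}{f_n^2}\le e^{-(\log2)^2}\sum_{j=0}^J\frac{((\log2)^2/2)^j}{j!}+\varepsilon(J)$ with $\varepsilon(J)\to0$, hence $\limsup\le e^{-(\log2)^2/2}$, completing the squeeze and proving $p_n\sim e^{-(\log2)^2/2}\frac{f_n^2}{n!}$. The main obstacle is exactly this tail estimate, and within it the range $j>n/2$: there the per-term bounds blow up and one must instead lean on the super-exponential decay supplied by the $n^{-n/2}$ factor, which is why a plain dominated-convergence argument over the index $j$ does not work and the $\liminf$/$\limsup$ squeeze is needed.
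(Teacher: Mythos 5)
Your proof is correct, and its skeleton---the identity $p_n=\frac{1}{n!}\sum_{k}\Sone{n}{k}q_k$, the reindexing $k\mapsto n-k$, the termwise asymptotics from Lemma \ref{lem:Sone_growth}, Theorem \ref{thm:qn_growth}, and Lemma \ref{lem:fubini_growth}, and the limiting series $e^{-(\log2)^2}\sum_{j}\frac{((\log2)^2/2)^j}{j!}=e^{-(\log2)^2/2}$---is exactly the paper's. Where you diverge is in justifying the interchange of limit and sum: you run a $\liminf$/$\limsup$ squeeze with truncation at $J$ and a two-range tail estimate split at $j=n/2$, whereas the paper applies the dominated convergence theorem directly. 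Your stated reason for avoiding dominated convergence---that $\sup_n\Sone{n}{n-j}q_{n-j}/f_n^2$ is not summable in $j$---is mistaken. The paper bounds this quantity by $C\,\frac{n^{2j}}{2^jj!}\frac{(n-j)!^2}{n!^2}$ using the explicit inequality of Lemma \ref{lem:Sone_growth} together with $q_m\le q_{m,0}=f_m^2$ and two-sided bounds on $f_m$, then observes that $n^{2j}\frac{(n-j)!^2}{n!^2}=\left(\frac{n}{n-j+1}\cdots\frac{n}{n}\right)^2$ is a product of finitely many factors each decreasing in $n$, hence is maximized at $n=j$ with value $j^{2j}/j!^2$; this yields the dominating sequence $C\,j^{2j}/(2^jj!^3)$, which is summable by the ratio test (Stirling gives roughly $(e^3/2)^j j^{-j}$ up to polynomial factors). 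So a plain dominated-convergence argument over $j$ does work, and is the paper's route. Your squeeze is nonetheless valid: the bound $\prod_{i=0}^{j-1}\left(\frac{n}{n-i}\right)^2\le4^j$ for $j\le n/2$, the resulting uniform tail $\sum_{j>J}\frac{(2(\log2)^2)^j}{j!}\to0$, and the crude Stirling estimate handling $n/2<j\le n$ all check out, so your argument is a correct, somewhat longer alternative; the only thing to repair is the parenthetical claim of non-summability, which as written asserts something false about the quantity the paper in fact dominates.
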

\begin{proof}
Applying the formula $\displaystyle p_n=\frac{1}{n!}\sum\limits_{k=0}^n\Sone{n}{k}q_k$ and replacing $k$ with $n-k$ gives
\[\frac{n!\,p_n}{f_n^2}=\sum_{k=0}^n\Sone{n}{k}\frac{q_k}{f_n^2}=\sum_{k=0}^n\Sone{n}{n-k}\frac{q_{n-k}}{f_n^2}.\]
If $k$ is fixed, then applying Lemma \ref{lem:fubini_growth}, Theorem \ref{thm:qn_growth}, and Lemma \ref{lem:Sone_growth} gives
\[\Sone{n}{n-k}\frac{q_{n-k}}{f_n^2}\sim\frac{n^{2k}}{2^kk!}\frac{\displaystyle e^{-(\log2)^2}\frac{(n-k)!^2}{4(\log2)^{2(n-k)+2}}}{\displaystyle\pq{\frac{n!}{2(\log2)^{n+1}}}^2}\sim\frac{e^{-(\log2)^2}(\log2)^{2k}}{2^kk!}.\]
By the dominated convergence theorem (justified below),
\[\frac{n!\,p_n}{f_n^2}\to\sum_{k=0}^\infty\frac{e^{-(\log2)^2}(\log2)^{2k}}{2^kk!}=e^{-(\log2)^2}e^{(\log2)^2/2}=e^{-(\log2)^2/2}.\]
Then the result follows from the asymptotic formula $f_n\sim\frac{n!}{2(\log2)^{n+1}}$.

It remains to justify this application of the dominated convergence theorem.
Note that the asymptotics $f_n\sim\frac{n!}{2(\log2)^{n+1}}$ and $q_n\sim e^{-(\log2)^2}\frac{n!}{4(\log2)^{2n+2}}$ give a constant $C$ (not depending on $n$ or $k$) such that we have
\[\Sone{n}{n-k}\frac{q_{n-k}}{f_n^2}\leq C\frac{n^{2k}}{2^kk!}\frac{\displaystyle\frac{(n-k)!^2}{(\log2)^{2(n-k)}}}{\displaystyle\pq{\frac{n!}{(\log2)^n}}^2}\leq C\frac{n^{2k}}{2^kk!}\frac{(n-k)!^2}{n!^2}\]
for $n\geq k$.
Now view the expression
\[n^{2k}\frac{(n-k)!^2}{n!^2}=\pq{\pq{\frac{n}{n-k+1}}\pq{\frac{n}{n-k+2}}\cdots\pq{\frac{n}{n}}}^2\]
as a function of $n\geq k$ for fixed $k$.
This function has a fixed number of terms, each of which is decreasing as a function of $n$.
In particular, this function is maximized at $n=k$ with value $k^{2k}/k!^2$.
This shows that
\[\Sone{n}{n-k}\frac{q_{n-k}}{f_n^2}\leq C\frac{n^{2k}}{2^kk!}\frac{(n-k)!^2}{n!^2}\leq C\frac{k^{2k}}{2^kk!^3}\]
for $n\geq k$.
Finally, the sum
\[\sum_{k=0}^\infty\frac{k^{2k}}{2^kk!^3}\]
is finite by the ratio test.
\end{proof}
\section{Further Directions}
\subsection{Higher Order Asymptotics}
Let $K$ denote the constant
\[K=\frac{e^{-(\log2)^2/2}}{4(\log2)^2}\approx0.409223.\]
Theorem \ref{thm:pn_growth} states that
\[\frac{p_n(\log2)^{2n}}{n!}\to K.\]
Figure 4 is a graph of $y=\log\big(K-\frac{p_n(\log2)^{2n}}{n!}\big)$ against $x=\log n$ for the values of $n$ given in the appendix.
\begin{figure}[H]
\centering
\includegraphics[scale=0.5]{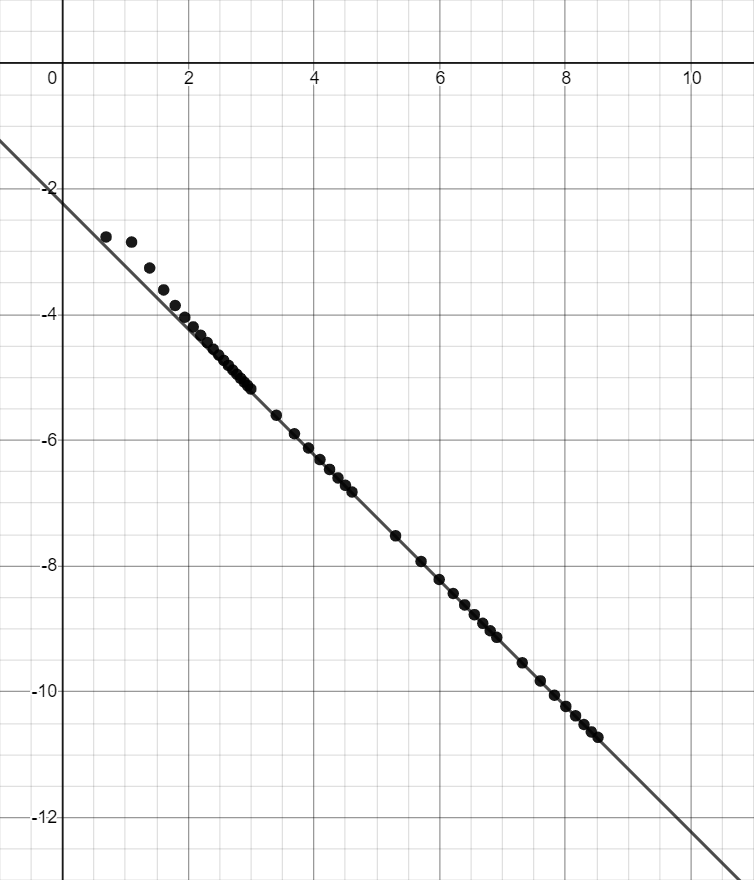}
\caption{Graph of $y=\log\big(K-\frac{p_n(\log2)^{2n}}{n!}\big)$ against $x=\log n$}
\end{figure}
\noindent
For large $n$, the points appear to approach a line with slope $-1$ and $y$-intercept $b\approx-2.23$.
In other words,
\[\log\pq{K-\frac{p_n(\log2)^{2n}}{n!}}\approx b-\log n.\]
Exponentiating and rearranging terms gives
\[\frac{p_n(\log2)^{2n}}{n!}\approx K-\frac{e^b}{n}.\]
This suggests the following conjecture.
\begin{conjecture}
There exists a constant $c>0$ such that
\[\frac{p_n(\log2)^{2n}}{n!}=K-\frac{c}{n}+O\pq{\frac{1}{n^2}}.\]
\end{conjecture}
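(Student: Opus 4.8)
The plan is to upgrade every asymptotic estimate used in the proof of Theorem~\ref{thm:pn_growth} to a two-term expansion with error $O(1/n^2)$ and then track how the expansions interact. The generalized Fubini numbers are handled through the identity $\sum_{m\ge0}\tfrac{f_{m+k,k}}{m!}x^m=\tfrac{k!}{(2-e^x)^{k+1}}$ from the remark after Proposition~\ref{prop:fubiniformula}: every singularity of the right-hand side is a pole of order $k+1$ at a point of $\log 2+2\pi i\mathbb Z$, the one at $x=\log 2$ dominates and the others contribute an exponentially small error, so the Laurent expansion of $k!/(2-e^x)^{k+1}$ at $x=\log 2$ produces, after extracting the coefficients of $(x-\log 2)^{-(k+1)},(x-\log 2)^{-k},\dots$, a full asymptotic series in $1/m$. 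Converting back via $(n-k)!\,(n-k)^{k}/n!=1-\tfrac{k(k+1)}{2n}+O(1/n^2)$ gives $f_{n,k}=\tfrac{n!}{2^{k+1}(\log 2)^{n+1}}\pq{1+\tfrac{\beta_k}{n}+O(1/n^2)}$ with $\beta_k$ explicit and polynomially bounded in $k$; in particular $f_n=f_{n,0}$ equals $\tfrac{n!}{2(\log 2)^{n+1}}$ up to an exponentially small error, so $f_n^2$ and $f_n^2/n!$ may be treated as exact to all polynomial orders.

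Next I would redo the dominated-convergence computation in the proof of Theorem~\ref{thm:qn_growth}, keeping one more term. The essential simplification is that $h_{t,c}=0$ for odd $t$ (Proposition~\ref{prop:htc_combo}) and $h_{2c,c}=(-1)^c(2c)!$, so in the triple sum for $q_n/f_n^2$ the block after $t=2c$ is $t=2c+2$, which enters only at order $1/n^2$; the same bounds of Corollary~\ref{cor:htc_estimate} and Lemma~\ref{lem:tech1} used in that proof show the full tail over $t>2c$ is $O(1/n^2)$ after summing in $c$ and $j$. Feeding the $f_{n,k}$-expansion above together with $\binom{n}{2c}=\tfrac{n^{2c}}{(2c)!}\pq{1-\tfrac{c(2c-1)}{n}+O(1/n^2)}$ into the $t=2c$ terms yields $q_n=e^{-(\log 2)^2}f_n^2\pq{1+\tfrac{\gamma}{n}+O(1/n^2)}$ with $\gamma$ an explicit convergent sum over $c$ and $j$ built from $\beta_j$, $\beta_{c-j}$, and the binomial correction. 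For the Stirling numbers of the first kind I would use the classical fact that $\Sone{n}{n-k}$ is a polynomial in $n$ of degree $2k$ with leading coefficient $1/(2^kk!)$ (or the finer expansions behind equation~1.6 of \cite{moser}) to get $\Sone{n}{n-k}=\tfrac{n^{2k}}{2^kk!}\pq{1+\tfrac{\delta_k}{n}+O(1/n^2)}$ with $\delta_k$ explicit and polynomially bounded, the uniform bound of Lemma~\ref{lem:Sone_growth} supplying the domination needed to sum over $k$.

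Assembling: from $\tfrac{n!\,p_n}{f_n^2}=\sum_{k=0}^n\Sone{n}{n-k}\tfrac{q_{n-k}}{f_n^2}$, substitute the three expansions along with $\tfrac{(n-k)!^2}{n!^2}=\tfrac{1}{n^{2k}}\pq{1+\tfrac{k(k-1)}{n}+O(1/n^2)}$ and $\tfrac{\gamma}{n-k}=\tfrac{\gamma}{n}+O(k/n^2)$. Each summand becomes $\tfrac{e^{-(\log 2)^2}(\log 2)^{2k}}{2^kk!}\pq{1+\tfrac{\delta_k+k(k-1)+\gamma}{n}+O(P(k)/n^2)}$ for a polynomial $P$, and the factor $1/k!$ makes the $k$-sum of the error terms $O(1/n^2)$. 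The main part sums to $e^{-(\log 2)^2/2}$ exactly as in Theorem~\ref{thm:pn_growth}, and the $1/n$ part sums to the convergent constant $C'=e^{-(\log 2)^2}\sum_{k\ge0}\tfrac{(\log 2)^{2k}}{2^kk!}\pq{\delta_k+k(k-1)+\gamma}$. Using $f_n^2(\log 2)^{2n}/(n!)^2=\tfrac{1}{4(\log 2)^2}$ up to exponentially small error then gives $\tfrac{p_n(\log 2)^{2n}}{n!}=K+\tfrac{1}{n}\cdot\tfrac{C'}{4(\log 2)^2}+O(1/n^2)$, which is the conjecture with $c=-C'/(4(\log 2)^2)$.

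The main obstacle is the sign: the steps above are essentially bookkeeping with tools already in the paper, but showing $c>0$ means showing $C'<0$ — and in particular that it does not vanish. The constant $C'$ mixes the Laurent coefficients $\beta_k$, the Stirling-polynomial coefficients $\delta_k$, and the nested constant $\gamma$ (itself a double sum from the $q_n$-expansion), and there is no obvious reason it should collapse to a recognizable closed form of definite sign. I would first try to evaluate $\gamma$ in closed form — it ought to be expressible through $\log 2$ and the first few derivatives of $1/(2-e^x)$ at $x=\log 2$ — and then do the same for $\sum_k\tfrac{(\log 2)^{2k}}{2^kk!}\delta_k$ via the generating function of the Stirling polynomials; failing a closed form, one would bound the series rigorously and check it against the numerically observed value $c\approx e^{-2.23}\approx0.11$. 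Establishing this sign, and ruling out an accidental cancellation in the $1/n$ coefficient, is where the real work lies.
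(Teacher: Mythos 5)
This statement is an open conjecture in the paper: the paper offers no proof, only the numerical evidence of Figure~4, so there is no argument of the author's to compare yours against. What you have written is a research program rather than a proof, and by your own admission it does not establish the statement. Two gaps are worth naming concretely. First, the conjecture's actual content is the existence of a \emph{positive} constant $c$; your plan terminates at an expression $c=-C'/(4(\log 2)^2)$ whose sign (and nonvanishing) you do not determine. If $C'=0$ the conjectured form is false as stated, so until the sign is settled nothing is proved. Second, the analytic bookkeeping is less routine than the phrase ``essentially bookkeeping'' suggests: Lemma~\ref{lem:fubini_growth} and Lemma~\ref{lem:Sone_growth} are asymptotics for \emph{fixed} $k$, and to extract the $1/n$ coefficient of $\f{n!\,p_n}{f_n^2}$ you need two-term expansions that are \emph{uniform} in $k$ (and, inside Theorem~\ref{thm:qn_growth}, in $c$, $t$, $j$) with error constants growing at most polynomially, so that the $O(1/n^2)$ remainders remain $O(1/n^2)$ after summation. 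The domination bounds you cite do not deliver this: the bound $C\,\abs{h_{t,c}}/(j!\,(c-j)!\,t!)$ from Lemma~\ref{lem:tech1} is independent of $n$, so it cannot show that the $t>2c$ tail is $O(1/n^2)$; you would need a sharpened version of Lemma~\ref{lem:tech1} of the form $\binom{n}{t}\pq{(n-t+c)!/n!}^2\leq n^{-(t-2c)}/t!$ (or similar), and analogous uniform second-order versions of the Fubini and Stirling asymptotics. These are plausible but constitute real, unwritten work.

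That said, the skeleton is sensible and consistent with the heuristics behind the conjecture: singularity analysis of $k!/(2-e^x)^{k+1}$ does give full asymptotic expansions of $f_{n,k}$ with exponentially small corrections beyond the dominant pole, the vanishing $h_{t,c}=0$ for odd $t$ correctly explains why the first correction to $q_n/f_n^2$ enters at order $1/n$ rather than $1/\sqrt n$, and evaluating your constants $\beta_k$, $\gamma$, $\delta_k$ in closed form and comparing $-C'/(4(\log2)^2)$ against the empirical value $e^{b}\approx 0.107$ would be the natural way to both check the computation and attack the sign question. As it stands, the proposal identifies the right ingredients but proves neither the expansion nor the positivity, which are precisely the two assertions the conjecture makes.
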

\subsection{Congruence Conjecture}
Theorem 2 of \cite{poonen} states that if $p$ is prime and $n\geq m$, then
\[f_{n+\varphi(p^m)}\equiv f_n\pmod{p^m},\]
where $\varphi$ is Euler's totient function.
In other words, the Fubini numbers are periodic modulo $p^m$.
Squaring both sides gives the congruence
\[q_{n+\varphi(p^m),0}\equiv q_{n,0}\pmod{p^m}\]
for $p$ prime and $n\geq m$.
Replacing $q_{n,0}$ with $q_n$ suggests the following conjecture, which is supported by the computed values of $q_n$ for $n\leq5000$.
\begin{conjecture}
If $p$ is prime and $n\geq m$, then
\[q_{n+\varphi(p^m)}\equiv q_n\pmod{p^m}.\]
\end{conjecture}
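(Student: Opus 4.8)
This conjecture can in fact be established by elementary means; the plan is to recognize $q_n$ as the \emph{Stirling (surjection) transform} of a sequence of nonnegative integers and to show that every such transform is eventually periodic modulo $p^m$ with period $\varphi(p^m)$. Recall from the proof of Proposition~\ref{prop:combo_distrib_matrix} that
\[q_n=\sum_{k=0}^nk!\Stwo{n}{k}\abs{\mathcal P_k},\]
and, since $\Stwo{n}{k}=0$ for $k>n$, the upper limit may be replaced by any integer at least $n$ without changing the sum. The coefficient $k!\Stwo{n}{k}$ is the number of surjections from an $n$-element set onto a $k$-element set, so inclusion--exclusion gives the closed form $k!\Stwo{n}{k}=\sum_{j=0}^k(-1)^{k-j}\binom{k}{j}j^n$, a fixed $\Z$-linear combination of the power sequences $n\mapsto j^n$. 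Everything therefore reduces to the periodicity of $j^n\bmod p^m$.

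First I would record the elementary lemma: for a prime $p$, an integer $m\geq1$, an integer $n\geq m$, and any integer $j\geq0$ one has $j^{n+\varphi(p^m)}\equiv j^n\pmod{p^m}$. When $p\nmid j$ this is Euler's theorem, $j^{\varphi(p^m)}\equiv1\pmod{p^m}$; when $p\mid j$ and $j\neq0$, the valuation estimate $v_p(j^n)=n\,v_p(j)\geq n\geq m$ forces both $j^n$ and $j^{n+\varphi(p^m)}$ to vanish modulo $p^m$; and when $j=0$ the claim is immediate since $n\geq1$. Here the hypothesis $n\geq m$ is used precisely to dispose of the terms with $p\mid j$. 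Substituting this lemma term by term into the closed form above gives, for every $k\geq0$ and every $n\geq m$,
\[k!\Stwo{n+\varphi(p^m)}{k}\equiv k!\Stwo{n}{k}\pmod{p^m},\]
and this remains valid on the range $n<k\leq n+\varphi(p^m)$, where the right-hand side is $0$ and the congruence simply asserts that $p^m$ divides $k!\Stwo{n+\varphi(p^m)}{k}$.

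Finally I would multiply this congruence by $\abs{\mathcal P_k}$ and sum over $0\leq k\leq n+\varphi(p^m)$:
\begin{align*}
q_{n+\varphi(p^m)}&=\sum_{k=0}^{n+\varphi(p^m)}k!\Stwo{n+\varphi(p^m)}{k}\abs{\mathcal P_k}\equiv\sum_{k=0}^{n+\varphi(p^m)}k!\Stwo{n}{k}\abs{\mathcal P_k}\\
&=\sum_{k=0}^{n}k!\Stwo{n}{k}\abs{\mathcal P_k}=q_n\pmod{p^m},
\end{align*}
which is the desired congruence; the case $m=0$ is trivial since the modulus is then $1$. The same argument applied to $\{\abs{\mathcal P_{k,0}}\}$ reproves $q_{n+\varphi(p^m),0}\equiv q_{n,0}\pmod{p^m}$ without appealing to the congruence for Fubini numbers, and more generally shows that the Stirling transform of any integer sequence is periodic modulo $p^m$ with period $\varphi(p^m)$ for $n\geq m$.

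I do not anticipate a genuine obstacle here: the only points needing care are (i) justifying the enlargement of the finite summation range and the term-by-term use of the congruence at indices $k>n$, and (ii) the edge cases $j=0$ and $m=0$ of the power lemma. A slicker but equivalent formulation is that $\sum_{n\geq0}(q_n/n!)x^n=A(e^x-1)$ with $A(y)=\sum_{k\geq0}\abs{\mathcal P_k}y^k\in\Z[[y]]$, and that every sequence of this shape is eventually periodic modulo $p^m$; the explicit surjection-count computation above is, however, the cleanest way to write it down.
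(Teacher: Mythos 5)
The paper does not prove this statement at all: it is left as an open conjecture, supported only by the computed values of $q_n$ for $n\leq5000$, and the surrounding text only derives the analogous congruence for $q_{n,0}=f_n^2$ by squaring Poonen's theorem. Your argument is correct and, as far as I can tell, settles the conjecture. The essential point is that $q_n=\sum_{k\geq0}k!\Stwo{n}{k}\abs{\mathcal P_k}$ is an integer linear combination of surjection counts, that the closed form $k!\Stwo{N}{k}=\sum_{j=0}^k(-1)^{k-j}\binom{k}{j}j^N$ holds for all $k\geq0$ and $N\geq1$ (vanishing when $k>N$), and that $j^{n+\varphi(p^m)}\equiv j^n\pmod{p^m}$ for every integer $j\geq0$ once $n\geq m$ --- Euler's theorem when $p\nmid j$, and the valuation bound $v_p(j^n)\geq n\geq m$ when $p\mid j$, which is exactly where the hypothesis $n\geq m$ is used. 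The two places the argument could have broken are both handled: the mismatch between the summation ranges for $q_{n+\varphi(p^m)}$ and $q_n$ is resolved because the termwise congruence persists for $n<k\leq n+\varphi(p^m)$, where it asserts the genuine (and correct) divisibility $p^m\mid k!\Stwo{n+\varphi(p^m)}{k}$; and the $j=0$ term is harmless since both exponents are positive. Your route also explains the phenomenon better than the paper's empirical analogy: the periodicity has nothing to do with weak orders or consecutive embeddings, only with $\{q_n\}$ being the surjection transform of an integer sequence, so the same argument reproves the $q_{n,0}$ congruence and applies to any sequence $\sum_kk!\Stwo{n}{k}a_k$ with $a_k\in\Z$. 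The congruence checks out against the tabulated values $q_0,\ldots,q_5$ for several small pairs $(p,m)$; this should be recorded as a theorem rather than a conjecture.
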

Unfortunately, this conjecture does not appear to imply anything about the behavior of the sequence $\{p_n\}$ modulo $p^m$, because of the division by $n!$ when converting from $\{q_n\}$ to $\{p_n\}$.
\section{Acknowledgements}
Many thanks to Sara Billey for proposing the problem and for helpful discussions, and to the WXML (Washington Experimental Mathematics Lab) for providing the project that led to this paper.
Also thanks to Sara Billey and the anonymous referees for their comments and suggestions on this paper.
\section{Web Resources}
The basic and memory-optimized implementations of the algorithm described in section 3.6 and the values of $p_n$ for $n\leq5000$ can be found at \[\text{https://github.com/tb65536/ParabolicDoubleCosets}\]
\section{Appendix: Values of \texorpdfstring{$p_n$}{Lg}}
\[\begin{tabular}{r|l r|l}
$n$&$p_n$&&$(p_n\log^{2n}2)/n!$\\\hline
1&1&(1 digits)&0.480453\\
2&3&(1 digits)&0.346253\\
3&19&(2 digits)&0.3512\\
4&167&(3 digits)&0.370774\\
5&1791&(4 digits)&0.382093\\
6&22715&(5 digits)&0.388048\\
7&334031&(6 digits)&0.391663\\
8&5597524&(7 digits)&0.394169\\
9&105351108&(9 digits)&0.396036\\
10&2200768698&(10 digits)&0.397485\\
11&50533675542&(11 digits)&0.398644\\
12&1265155704413&(13 digits)&0.399593\\
13&34300156146805&(14 digits)&0.400385\\
14&1001152439025205&(16 digits)&0.401056\\
15&31301382564128969&(17 digits)&0.401631\\
16&1043692244938401836&(19 digits)&0.402131\\
17&36969440518414369896&(20 digits)&0.402569\\
18&1386377072447199902576&(22 digits)&0.402955\\
19&54872494774746771827248&(23 digits)&0.403299\\
20&2285943548113541477123970&(25 digits)&0.403608\\
30&382079126820...882950534546&(42 digits)&0.405528\\
40&179736290098...532574927537&(61 digits)&0.406469\\
50&102365379120...338473199289&(81 digits)&0.407028\\
60&427699505826...027450945465&(101 digits)&0.407398\\
70&940027093836...926979570377&(122 digits)&0.407662\\
80&857360695445...439742054481&(144 digits)&0.407859\\
90&271659624624...300501685746&(167 digits)&0.408011\\
100&260443549181...383464403196&(190 digits)&0.408133\\
200&150691150471...390138470043&(439 digits)&0.40868\\
300&400039289653...047576602840&(710 digits)&0.408862\\
400&572423854465...686938545249&(996 digits)&0.408952\\
500&745894661762...526127432358&(1293 digits)&0.409006\\
600&529056570650...070570426529&(1599 digits)&0.409042\\
700&692359539273...658799872850&(1912 digits)&0.409068\\
800&150717237472...313160125048&(2232 digits)&0.409088\\
900&902565318506...968550812571&(2556 digits)&0.409103\\
1000&367762337807...336792083803&(2886 digits)&0.409115\\
1500&657393993927...489306609387&(4592 digits)&0.409151\\
2000&677187561025...781759174668&(6372 digits)&0.409169\\
2500&497164609537...894142980291&(8207 digits)&0.40918\\
3000&189293873430...434167136044&(10086 digits)&0.409187\\
3500&163043229993...353705274487&(12001 digits)&0.409192\\
4000&186384435725...985721119395&(13947 digits)&0.409196\\
4500&346443781530...440739293425&(15920 digits)&0.409199\\
5000&962766473267...951984139754&(17917 digits)&0.409201\\
\end{tabular}\]


\begin{thebibliography}{9}
\bibitem{billey}
S. Billey, M. Konvalinka, T. K. Peterson, W. Slofstra, and B. E. Tenner. 
\textit{Parabolic Double Cosets in Coxeter Groups}.
\textit{The Electronic Journal of Combinatorics}, 25(1), 2018.
\bibitem{diaconis}
P. Diaconis and A. Gangolli.
\textit{Rectangular Arrays with Fixed Margins}.
\textit{Discrete Probability and Algorithms}, pages 15-41.
\textit{The IMA Volumes in Mathematics and its Applications}, vol 72, 1995. Springer, New York.
\bibitem{kobayashi}
M. Kobayashi. \textit{Two-Sided Structure of Double Cosets in Coxeter Groups}, June 2011. Accessed online October 2018.
\bibitem{moser}
L. Moser and M. Wyman.
\textit{Asymptotic Development of the Stirling Numbers of the First Kind}.
\textit{Journal of the London Mathematical Society}, 33(2):131-146, 1958.
\bibitem{oeis}
\textit{The On-Line Encyclopedia of Integer Sequences}, published electronically at \url{https://oeis.org}, October 2018.
\bibitem{peterson}
T. K. Petersen. \textit{A two-sided analogue of the Coxeter complex}. \textit{The Electronic Journal of Combinatorics}, 25(4), 2018.
\bibitem{poonen}
B. Poonen. \textit{Periodicity of a Combinatorial Sequence}.
\textit{The Fibonacci Quarterly}, 26(1), 1988.
\bibitem{riordan}
J. Riordan. \textit{An Introduction to Combinatorial Analysis}. \textit{Princeton University Press}, 1978.
\bibitem{wilf}
H. S. Wilf. \textit{generatingfunctionology}. Accessed online on August 2020.
\end{thebibliography}
\end{document}